\numberwithin{equation}{section}
\newtheorem{thrm}{Theorem}[section]
\newtheorem{lemma}[thrm]{Lemma}
\newtheorem{prop}[thrm]{Proposition}
\newcommand{\R}{\mathbb{R}}
\newcommand{\lc}{\langle}
\newcommand{\rc}{\rangle}
\newcommand{\br}[1]{\overline{#1}}
\newcommand{\dt}{\,.\,}
\newcommand{\spc}{\ \,}
\def\sideremark#1{\ifvmode\leavevmode\fi\vadjust{
\vbox to0pt{\hbox to 0pt{\hskip\hsize\hskip1em
\vbox{\hsize23mm\tiny\raggedright\pretolerance10000
\noindent #1\hfill}\hss}\vbox to8pt{\vfil}\vss}}}
\begin{document}

\begin{abstract}
Following the Cartans's original method of equivalence supported by methods of parabolic geometry, we provide a complete solution for the equivalence problem of  quaternionic contact structures, that is, the problem of finding a complete system of differential invariants for two quaternionic contact manifolds to be locally diffeomorphic. This includes an explicit construction of the corresponding Cartan geometry and detailed information on all curvature components.
\end{abstract}

\keywords{quaternionic contact, equivalence problem, Cartan connection, involution}

\subjclass{58G30, 53C17}

\title[On the equivalence of quaternionic contact structures]
{On the equivalence of quaternionic contact structures}

\date{\today}

\author{Ivan Minchev}
\address[Ivan Minchev]{University
of Sofia, Faculty of Mathematics and Informatics, blvd. James Bourchier 5, 1164 Sofia, Bulgaria;
Department of Mathematics and Statistics, Masaryk University, Kotlarska 2, 61137 Brno,
Czech Republic}
\email{minchev@fmi.uni-sofia.bg}

\author{Jan Slov\'{a}k}
\address[Jan Slov\'{a}k]{
Department of Mathematics and Statistics, Masaryk University, Kotlarska 2, 61137 Brno,
Czech Republic}
\email{slovak@math.muni.cz}

\maketitle

\setcounter{tocdepth}{2} \tableofcontents

\section{Introduction}

There is the series of important geometries naturally appearing at the generic
hypersurfaces in projective spaces. The Klein models $G\to G/P$ 
for all of them are
spheres, i.e. the conformal Riemannian sphere 
$S^n\subset \mathbb RP^{n+1}$, the CR-sphere $S^{2n+1}\subset \mathbb
CP^{n+1}$, and the quaternionic contact sphere $S^{4n+3}\subset \mathbb
HP^{n+1}$, respectively, or other nice homogeneous spaces in
the cases of other than positive definite signatures.

All these geometries appear as boundaries of domains, carrying a lot of
information -- let us mention the conformal horizons in mathematical physics,
the boundaries of domains in complex analysis and function theory, and the
boundaries of quaternionic-K\"ahler domains.    

The corresponding Lie algebras enjoy very similar algebraic structures with
gradings
$$
\mathfrak g = \mathfrak g_{-2}\oplus \mathfrak g_{-1}\oplus \mathfrak g_0
\oplus \mathfrak g_1 \oplus \mathfrak g_2,
$$ 
where $\mathfrak g_0$ further 
splits as $\mathfrak h\oplus \mathfrak g_0'$, as indicated symbolically 
in the matrix (the $*$ entries mean those computed from the symmetries of
the matrix)
\begin{equation*}\label{basic_grading}
\left(\begin{array}{c|c|c}
\mathfrak h & \mathfrak g_1 & \mathfrak g_2
\\[4mm]
\hline
\mathfrak g_{-1} & \mathfrak g_0' &  *
\\  [4mm]                                        
\hline
\mathfrak g_{-2} & * & *
\end{array}
\right),
\end{equation*}
The corresponding Lie algebras $\mathfrak g$ are  $\mathfrak{so}(p+1,q+1)$,
$\mathfrak{su}(p+1,q+1)$, and $\mathfrak{sp}(p+1,q+1)$. Thus viewing them as
matrix algebras over $\mathbb K=\mathbb R, \mathbb C, \mathbb H$,  they
always have columns and rows of width $1, n, 1$, respectively, and
$\mathfrak h=\mathbb K$, $\mathfrak g_{-1}=\mathbb K^n$, $\mathfrak
g_1=\mathbb K^{n*}$, $\mathfrak g_2$ is the imaginary part of $\Bbb K$ (thus
vanishing in the case $\mathbb K=\mathbb R$) and $\mathfrak g_0'$ is the algebra of the same
type as $\mathfrak g$ of signature $(p,q)$. 

All these geometries fit into the class of Cartan geometries with $G$
semisimple and $P$ parabolic and thus there is the rich general theory
explaining the cohomological character of basic invariants. The
constructions of the relevant normalized Cartan connections and detailed
analysis of its curvature is well known for decades in the first two cases,
but much less is known in the case of quaternionic contact geometries.
This is perhaps due to the much higher complexity of the analysis to be
expected.

Our aim is to fill this gap and provide a full analogy to the construction
of the normal Cartan connection by Chern and Moser in their paper
\cite{ChM}, including detailed information on all curvature components. 
We shall come back to further motivation for this endeavor below. Let us only mention here that given a quaternionic contact structure on a manifold, Ivanov and Vassilev \cite{IV} have provided an explicit expression for a certain tensorial quantity whose vanishing is equivalent to the existence of a local diffeomorphism mapping the starting quaternionic contact structure to that of the standard Sphere (the Klein model). It was shown in \cite{Alt} that the tensorial quantity constructed in \cite{IV} corresponds precisely to the harmonic curvature of the associated Cartan connection. 

We
are going to deliver our construction in terms of the most classical
exterior calculus and it should be completely understandable without direct 
insight
into the general cohomological structure of the curvature.
But of course, it is this knowledge which allows us to know in advance that the
individual steps will work. 

The first step in understanding the difficulty is the definition of the
geometry itself. While the conformal geometry has got the trivial
filtration on the tangent bundle and the geometry itself is one of the most
classical G-structures, 
 the CR geometry is already defined by a contact
distribution 
$
T^{-1}M\subset TM
$
with a further reduction of the
associated graded tangent bundle to a structure group respecting the
additional complex structure on the distribution.

Let us now clarify the CR case carefully from a more abstract point of view.
The book \cite{CS} can be consulted for both the general theory and 
details on CR structures.

The homogeneity zero component of the first Lie algebra cohomology 
$H^1(\mathfrak
g/\mathfrak p,\mathfrak g)_0$ is nontrivial in this case, thus the extra
reduction of the frame bundle. 
Moreover, the second cohomology $H^2(\mathfrak
g/\mathfrak p,\mathfrak g)$ is nontrivial in homogeneities one and two (in
all dimensions $\operatorname{dim}M \ge 5$). In
particular, there is no cohomology in homogeneity zero and thus the
algebraic Lie bracket on $\operatorname{Gr}TM$ induced by the Lie bracket of
vector fields has to coincide with the Lie bracket on $\mathfrak
g_{-2}\oplus \mathfrak g_{-1}$ from the Lie algebra in question. This means
that algebraic bracket on $\operatorname{Gr} TM$ has to be the imaginary
part of a hermitian form on $T^{-1}M$. Further, the cochains generating the 
second cohomology in homogeneity one are of the type $\Lambda^2\mathfrak
g_{-1}^*\otimes \mathfrak g_{-1}$ (obstructing the integrability of the complex
structure $J$ on $T^{-1}M$). This is the torsion of the canonical Cartan
curvature, which automatically vanishes in the case of embedded
hypersurfaces in $\mathbb C^{n+1}$. 

Another important cohomological information is the automatic vanishing of
the cohomology with cochains of the type $\mathfrak
g_{-2}^*\otimes\mathfrak g_{-1}^*\otimes \mathfrak g_{-2}$, also in
homogeneity one. Indeed, any choice of a contact form $\theta$
defining the CR-distribution will split the tangent bundle to
$TM=T^{-1}M\oplus T^{-2}M$, identify $T_{-2}M$ with $M\times \mathbb R$ (via
the Reeb vector field of $\theta$),
the algebraic Lie bracket will get a
symplectic form, and thus together with the complex structure $J$ we also
get the 
Levi-Civita connection for all derivatives in the directions of $T^{-1}M$. 
The latter cohomological information implies that all these objects are
fully in compliance with the canonical Cartan connection for the structure.

Now, we come to the quaternionic contact geometries.  Following the work of Biquard \cite{Biq1}, this is a type of geometric structure describing the Carnot-Carath{\'e}odory geometry of the boundary at infinity of quaternionic K\"ahler manifolds. The quaternionic contact geometry also became a crucial geometric tool in finding the extremals and the best constant in the $L^2$ Folland-Stein Sobolev-type embedding on the quaternionic Heisenberg group \cite{IMV2}, \cite{IMV}, \cite{IMV3}.
Here, the first
cohomology $H^1(\mathfrak
g/\mathfrak p,\mathfrak g)$ appears only in negative homogeneities, thus
the entire geometry is completely defined by the distribution
$T^{-1}M\subset TM$ of codimension three. This means that if there were a
pre-quaternionic vector space structure on $T^{-1}M$ for which an algebraic
bracket $[\ ,\ ]_{\mbox{\tiny alg}}\!:\Lambda^2T^{-1}M\to TM/T^{-1}M$ would be an
imaginary part of a hermitian form, then this structure is unique. 

Again, let us skip the lowest dimension first, i.e. 
$\operatorname{dim}M\ge 11$. Then the second
cohomology $H^2(\mathfrak
g/\mathfrak p,\mathfrak g)$  has two components. One of them appears in
homogeneity zero, with cochains of the
type $\Lambda \mathfrak g_{-1}^*\otimes\mathfrak g_{-2}$. This is a tricky
point, since this means that if this part of the torsion is non-zero, then
the geometric structure is defined by the distribution and a choice of an
algebraic bracket $[\ ,\ ]_{\mbox{\tiny alg}}$ such that the latter bracket
allows for a pre-quaternionic structure such that it gets the imaginary part
of a hermitian form. Of course, the difference of 
this bracket and the standard one (defined by the Lie bracket of vector fields) 
should be normalized (co-closed in the terms of the Lie algebra cohomology, cf. the
appendix).

Dealing with generic hypersurfaces in $\mathbb H^{n+1}$, we should thus
expect three very much different possibilities. First, the distributions
with the inherited pre-quaternionic structure and the Lie bracket 
will satisfy all
the properties (i.e. the bracket will be the imaginary part of suitable
hermitian form on $T_{-1}M$). This is extremely restrictive and, as shown in 
\cite{IMV5},  can happen only if $M$ is locally isomorphic to the homogeneous model---the 3-Sasakian sphere.

The second possibility is to require that the inherited distribution and the
Lie bracket are allowing for a pre-quaternionic structure as above.
This assumption is much less rigid, but we need an explicit construction and
knowledge of the canonical Cartan connection in order to be able to deal
with such examples properly. This has been the initial main motivation for
this paper and
we shall come back to such special class of hypersurfaces in $\mathbb H^{n+1}$
in another future work. In the lowest dimension, seven, this is the most
general case, but there we have also a homogeneity one torsion component.

We shall not deal with the most difficult third option
here at all. Let us just mention a preprint by Stuart Armstrong devoted to 
a general
class of Cartan geometries with this kind of behavior, \cite{Arm}. 

Thus, let us assume we are given an abstract quaternionic contact manifold
$M$, i.e. a distribution equipped with the right quaternionic contact
structure (in any signature). Then, only the other second cohomology
component can give rise to curvature with  cochains of the type
$\Lambda^2\mathfrak g_{-1}^*\otimes \mathfrak g'_0$, except for the lowest 
dimension $\operatorname{dim} M = 7$, where another torsion with cochains of
the type $\mathfrak g_{-2}^*\otimes\mathfrak g_{-1}^*\otimes \mathfrak g_{-2}$ 
may appear. 
In particular, exactly as in the CR-geometry case, 
if there is no curvature of the form $\mathfrak g_{-2}^*\otimes\mathfrak
g_{-1}^*\otimes\mathfrak g_{-2}$, then the general first homogeneity
prolongation procedure will again produce the triples of contact forms and
their corresponding Reeb vector fields corresponding to the reductions of
the structure group to $\mathfrak g_0'$ (as exploited in the
positive definite case in \cite{Biq1}). 

The latter observation will be the starting point in our construction.
Morever, the general knowledge of the total curvature structure deduced in
\cite[Corollary 3.2]{Cap-twistor} reveals that there is no curvature with
values in $\mathfrak h$. Thus, in full analogy with the construction by
Chern and Moser, we may move straight to the appropriate frame bundle
with structure Lie algebra $\mathfrak h=\mathbb R\oplus
\mathfrak{sp}(1)=\mathbb H$, and work out all our exterior
calculus there. 

The main results are spread through the text as follows: Theorem
\ref{Theorem_1} provides the construction of the Cartan connection as the
canonical coframe at the right principal fiber bundle. Then, right in the
beginning of the next section, Proposition \ref{curvature} displays the
complete structure equations of the coframe, 
thus providing all the curvature components of the
canonical coframe. Next the differential consequences of the Bianchi
identities (cf. Proposition \ref{bianchi}) 
are explicitly listed in Proposition \ref{secondary}.

The explanation how the coframe and its curvature are
related to the Lie algebra structure is presented in section \ref{sec5} on the
associated Cartan geometry in terms of the principal fiber bundles and
the algebraic normalization conditions. In particular, in \ref{normal_Cartan}
we verify that the
curvature of the constructed canonical coframe is co-closed and thus
coincides with the normal Cartan connection for the quaternionic contact
manifolds. In the very end, the appendix collects brief information on
the abstract theory of parabolic geometries and provides links of the
general concepts to the the individual objects and formulae in the paper.

{\bf Acknowledgments.} I.M. is partially supported by
Contract DFNI I02/4/12.12.2014 and Contract 80-10-33/2017 with the
Sofia University "St.Kl.Ohridski".  I.M. is also supported by a SoMoPro II Fellowship which is co-funded  by
the European Commission\footnote{This article reflects only the author's views and the EU is not liable for any use that may be made of the information contained therein.} from \lq\lq{}People\rq\rq{} specific program (Marie Curie Actions) within the EU
Seventh Framework Program on the basis of the grant agreement REA No. 291782. It is further co-financed by the South-Moravian Region. J.S. is supported by the grant P201/12/G028 of the Grant Agency of the Czech Republic.

\section{Preliminaries}

\subsection{Conventions concerning the use of complex tensors and indices}\label{prelim}

Throughout this paper, we use without comment the convention of summation over repeating indices; the small Greek indices $\alpha,\beta,\gamma,\dots$ will have the range $1,\dots,2n$, whereas the indices $s,t,k,l,m$ will be running from $1$ to $3$.  

Consider the Euclidean vector space $\R^{4n}$ with its standard inner product $\lc,\rc$  (with or without signature) and a quaternionic structure induced by the identification $\R^{4n}\cong\mathbb H^n$ with the quaternion coordinate space $\mathbb H^n$.  The latter means that we endow $\R^{4n}$ with a fixed triple $J_1,J_2,J_3$ of complex structures which are Hermitian with respect to $\lc,\rc$ and satisfy
$
J_1\,J_2=-J_2\,J_1=J_3.  
$ 
The complex vector space  $\mathbb C^{4n}$, being the complexification of $\R^{4n}$, splits as a direct sum of $+i$ and $-i$ eigenspaces, $\mathbb C^{4n}=\mathcal W\oplus\br {\mathcal W}$, with respect to the complex structure $J_1.$ The complex 2-form $\pi$, 
\begin{equation*}
\pi(u,v)\overset{def}{=}\lc J_2u,v\rc+i\lc J_3u,v\rc,\qquad u,v\in \mathbb C^{4n},
\end{equation*}
has type $(2,0)$ with respect to $J_1$, i.e., it satisfies $\pi(J_1u,v)=\pi(u,J_1v)=i\pi(u,v)$. Let us fix an $\lc,\rc$-orthonormal basis (once and for all)  
\begin{equation}\label{fixed-basis}
\{\mathfrak e_\alpha\in \mathcal W, \mathfrak e_{\bar\alpha}\in \br{\mathcal W}\},\qquad \mathfrak e_{\bar\alpha}=\br{ \mathfrak e_\alpha},
\end{equation}
with dual basis $\{\mathfrak e^\alpha, \mathfrak e^{\bar\alpha}\}$ 
so that $\pi=\mathfrak e^1\wedge \mathfrak e^{n+1}+\mathfrak e^2\wedge \mathfrak e^{n+2}+\dots + \mathfrak e^{n}\wedge \mathfrak e^{2n}$. Then, we have
\begin{equation}
\lc,\rc = g_{\alpha\bar\beta}\, \mathfrak e^\alpha\otimes \mathfrak e^{\bar\beta} + g_{\bar\alpha\beta}\, \mathfrak e^{\bar\alpha}\otimes \mathfrak e^{\beta},\qquad \pi=\pi_{\alpha\beta}\, \mathfrak e^{\alpha}\wedge \mathfrak e^{\beta}, 
\end{equation}
 where,  for the positive definite case, we take
\begin{equation}\label{constants}
 g_{\alpha\bar\beta}=g_{\bar\beta\alpha}=\begin{cases}1, & \mbox{if } \alpha=\beta\\0, & \mbox{if } \alpha\ne\beta 
 \end{cases},\qquad \pi_{\alpha\beta}=-\pi_{\beta\alpha}=\begin{cases} 1,& \mbox{if } \alpha+n=\beta\\-1,& \mbox{if } \alpha=\beta+n\\0, & \mbox{otherwise},
 \end{cases}
\end{equation}
and for the case of signature, we take $-1$ instead of $1$ for the respective coefficients. In fact, the precise values of the constants $g_{\alpha\bar\beta}$ and $\pi_{\alpha\beta}$ are completely irrelevant for the forthcoming developments; the only thing that matters is that $g_{\alpha\bar\beta}$ is non-degenerate and hermitian (i.e. $g_{\alpha\bar\beta}=g_{\bar\beta\alpha}$), $\pi$ is non-degenerate and skew-symmetric (i.e. $\pi_{\alpha\beta}=-\pi_{\beta\alpha}$), and that 
$$g^{\sigma\bar\tau}\pi_{\alpha\sigma}{\pi_{\bar\tau\bar\beta}}=-g_{\alpha\bar\beta},\qquad \pi_{\bar\tau\bar\beta}\overset{def}{=}\overline{\pi_{\tau\beta}},$$ 
where $g^{\alpha\bar\beta}=g^{\bar\beta\alpha}$ denotes the inverse of $g_{\alpha\bar\beta}$, i.e. 
$g^{\alpha\bar\sigma} g_{\bar\sigma\beta}=\delta^{\alpha}_\beta$ ($\delta^{\alpha}_{\beta}$ is the Kronecker delta).

Any array
of complex numbers indexed by lower and upper Greek letters (with and without bars) corresponds to a tensor, e.g., $\{A_{\alpha\dt\dt}^{\spc\beta\bar\gamma}\}$ corresponds to the tensor 
\begin{equation*}
A_{\alpha\dt\dt}^{\spc\beta\bar\gamma}\,\mathfrak e^\alpha\otimes\mathfrak e_{\beta}\otimes\mathfrak e_{\bar\gamma}.
\end{equation*}     
Clearly, the vertical as well as the horizontal position of an index carries information about the tensor. For two-tensors, we take $B^\alpha_\beta$ to mean $B^{\spc\alpha}_{\beta\dt},$ i.e., the lower index is assumed to be first.  We use $g_{\alpha\bar\beta}$ and $g^{\alpha\bar\beta}$ to lower and raise indices in the usual way, e.g., 
$$A^{\spc\beta}_{\alpha\dt\gamma}=g_{\bar\sigma\gamma}\, A_{\alpha\dt\dt}^{\spc\beta\bar\sigma},\qquad A^{\bar\alpha\beta\bar\gamma}=g^{\bar\alpha\sigma}A_{\sigma\dt\dt}^{\spc\beta\bar\gamma}.$$

We  use  also the following convention:  Whenever an array $\{A_{\alpha\dt\dt}^{\spc\beta\bar\gamma}\}$ appears,  the array $\{A_{\bar\alpha\dt\dt}^{\spc\bar\beta\gamma}\}$ will be assumed to be defined, by default, by the complex conjugation
 $$A_{\bar\alpha\dt\dt}^{\spc\bar\beta\gamma}= \overline{A_{\alpha\dt\dt}^{\spc\beta\bar\gamma}}.$$ This means that we interpret $\{A_{\alpha\dt\dt}^{\spc\beta\bar\gamma}\}$ as a representation of a real tensor, defined on $\R^{4n}$, 
 with respect to the fixed complex basis \eqref{fixed-basis}; the corresponding real tensor in this case is
\begin{equation*}
A_{\alpha\dt\dt}^{\spc\beta\bar\gamma}\,\mathfrak e^\alpha\otimes\mathfrak e_{\beta}\otimes\mathfrak e_{\bar\gamma} + A_{\bar\alpha\dt\dt}^{\spc\bar\beta\gamma}\,\mathfrak e^{\bar\alpha}\otimes\mathfrak e_{\bar\beta}\otimes\mathfrak e_{\gamma}.
\end{equation*}     
 
 Notice that $\pi^{\alpha}_{\bar\sigma}\,\pi^{\bar\sigma}_{\beta}=-\ \delta^{\alpha}_{\beta}$. We  introduce a complex antilinear endomorphism $\mathfrak j$ of the tensor algebra of $\R^{4n}$, which takes a tensor with components $T_{\alpha_1\dots\alpha_k\bar\beta_1\dots\bar\beta_l\dots}$ to a tensor of the same type, with components $(\mathfrak jT)_{\alpha_1\dots\alpha_k\bar\beta_1\dots\bar\beta_l\dots}$, by the formula
 \begin{equation}
 (\mathfrak jT)_{\alpha_1\dots\alpha_k\bar\beta_1\dots\bar\beta_l\dots}=\sum_{\bar\sigma_1\dots\bar\sigma_k\tau_1\dots\tau_l\dots}\pi^{\bar\sigma_1}_{\alpha_1}\dots\pi^{\bar\sigma_k}_{\alpha_k}\,\pi^{\tau_1}_{\bar\beta_1}\dots\pi^{\tau_l}_{\bar\beta_l}\dots T_{\bar\sigma_1\dots\bar\sigma_k\tau_1\dots\tau_l\dots}.
 \end{equation}

By definition, the group $Sp(n)$ consists of all endomorphisms of $\R^{4n}$ that preserve the inner product $\lc,\rc$ and commute with the complex structures $J_1,J_2$ and $J_3$.   
With the above notation, we can alternatively describe $Sp(n)$ as the set of all two-tensors $\{U^\alpha_\beta\}$ satisfying
\begin{equation}\label{def-Spn}
 g_{\sigma\bar\tau}U^{\sigma}_{\alpha}U^{\bar\tau}_{\bar\beta}=g_{\alpha\bar\beta},\qquad \pi_{\sigma\tau}U^{\sigma}_{\alpha}U^{\tau}_{\beta}=\pi_{\alpha\beta}.
 \end{equation}
For its Lie algebra, $sp(n)$, we have the following description:
\begin{lemma}\label{sp(n)} For a tensor $\{X_{\alpha\bar\beta}\}$, the following conditions are equivalent:
\par (1) $\{X_{\alpha\bar\beta}\} \in sp(n)$.
\par (2) $X_{\alpha\bar\beta}=-X_{\bar\beta\alpha},$ $(\mathfrak jX)_{\alpha\bar\beta}=X_{\alpha\bar\beta}.$
 \par (3) $X_{\beta}^{\alpha}=\pi^{\alpha\sigma}Y_{\sigma\beta}$ for some tensor $\{Y_{\alpha\beta}\}$ satisfying $Y_{\alpha\beta}=Y_{\beta\alpha}$ and $(\mathfrak jY)_{\alpha\beta}=Y_{\alpha\beta}$.
\end{lemma}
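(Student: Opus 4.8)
The plan is to read condition (1) in coordinates, establish (1)~$\iff$~(2) from the description of $Sp(n)$ as the endomorphisms commuting with $J_1,J_2,J_3$, and then obtain (3) by inverting $\pi$; apart from index bookkeeping the only inputs are the reality convention and the two identities $\pi^{\alpha}_{\bar\sigma}\pi^{\bar\sigma}_{\beta}=-\delta^{\alpha}_{\beta}$ and $g^{\sigma\bar\tau}\pi_{\alpha\sigma}\pi_{\bar\tau\bar\beta}=-g_{\alpha\bar\beta}$ recorded above. One must first make precise the identification behind ``$\{X_{\alpha\bar\beta}\}\in sp(n)$'': the tensor is the endomorphism of $\mathbb C^{4n}$ acting on $\mathcal W$ by $\mathfrak e_{\beta}\mapsto X^{\alpha}_{\beta}\mathfrak e_{\alpha}$ with $X^{\alpha}_{\beta}=g^{\alpha\bar\sigma}X_{\beta\bar\sigma}$, and on $\br{\mathcal W}$ by the complex-conjugate endomorphism. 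Such an $X$ is automatically real and commutes with $J_1$, since it preserves each of the eigenspaces $\mathcal W,\br{\mathcal W}$; hence lying in $sp(n)$ amounts to $X$ being skew for $\lc,\rc$ \emph{and} commuting with $J_2$ (then $J_1$- and $J_2$-invariance force $J_3$-invariance). Evaluating $\lc Xu,v\rc+\lc u,Xv\rc$ on pairs of basis vectors from $\mathcal W$ and $\br{\mathcal W}$ and using the coordinate form of $\lc,\rc$ shows that $\lc,\rc$-skewness is precisely $X_{\alpha\bar\beta}=-X_{\bar\beta\alpha}$, the first condition in (2).

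For the second condition in (2), I would establish the identity $\mathfrak jX=-\,J_2\circ X\circ J_2$ for endomorphisms $X$ of the above block type. Since $J_2$ anti-commutes with $J_1$ it interchanges $\mathcal W$ and $\br{\mathcal W}$, and unwinding the definition of $\pi$ together with $J_3=J_1J_2$ shows that on the fixed basis $J_2$ sends $\mathfrak e_{\alpha}$ to a multiple of $\pi^{\bar\sigma}_{\alpha}\mathfrak e_{\bar\sigma}$ and $\mathfrak e_{\bar\alpha}$ to the conjugate multiple of $\pi^{\sigma}_{\bar\alpha}\mathfrak e_{\sigma}$ — the multiple being of modulus one by $J_2^2=-1$ (using $\pi^{\alpha}_{\bar\sigma}\pi^{\bar\sigma}_{\beta}=-\delta^{\alpha}_{\beta}$) and hence irrelevant; composing $J_2\circ X\circ J_2$ and matching with the defining formula of $\mathfrak j$ yields the identity. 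As $J_2^2=-1$, ``$X$ commutes with $J_2$'' is then equivalent to $J_2XJ_2=-X$, i.e.\ to $\mathfrak jX=X$, which is the second condition in (2). This proves (1)~$\iff$~(2). (Alternatively, linearize \eqref{def-Spn}: its first equation linearizes to $X_{\alpha\bar\beta}=-X_{\bar\beta\alpha}$ and its second to the symmetry of $\pi_{\sigma\beta}X^{\sigma}_{\alpha}$, which modulo the first is again $\mathfrak jX=X$.)

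For (2)~$\iff$~(3), note that $\pi^{\alpha\sigma}$, equivalently $\pi^{\alpha}_{\bar\sigma}$, is invertible by $\pi^{\alpha}_{\bar\sigma}\pi^{\bar\sigma}_{\beta}=-\delta^{\alpha}_{\beta}$, so that writing $X^{\alpha}_{\beta}=\pi^{\alpha\sigma}Y_{\sigma\beta}$ imposes nothing on $X^{\alpha}_{\beta}$ by itself, and solving for $Y$ gives $Y_{\tau\beta}=\pi^{\bar\rho}_{\tau}X_{\beta\bar\rho}$ up to the sign fixed by our conventions. One then verifies, by contracting with copies of $\pi$ and collapsing via $\pi^{\alpha}_{\bar\sigma}\pi^{\bar\sigma}_{\beta}=-\delta^{\alpha}_{\beta}$ and the reality convention, that the pair $X_{\alpha\bar\beta}=-X_{\bar\beta\alpha}$, $\mathfrak jX=X$ of (2) is carried to the pair $Y_{\alpha\beta}=Y_{\beta\alpha}$, $\mathfrak jY=Y$ of (3), and conversely on running the same manipulations backwards. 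A dimension count, $\dim_{\R}sp(n)=n(2n+1)=\dim_{\R}\{\,Y_{\alpha\beta}=Y_{\beta\alpha},\ \mathfrak jY=Y\,\}$, is a handy check on the signs.

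None of these computations is deep; the only genuine obstacle is bookkeeping — tracking which Greek indices carry bars, respecting the ordering convention $B^{\alpha}_{\beta}=B^{\spc\alpha}_{\beta\dt}$, placing the conjugations required by the reality convention correctly, and not conflating the two sign conventions in $\pi^{\alpha}_{\bar\sigma}\pi^{\bar\sigma}_{\beta}=-\delta^{\alpha}_{\beta}$ and $g^{\sigma\bar\tau}\pi_{\alpha\sigma}\pi_{\bar\tau\bar\beta}=-g_{\alpha\bar\beta}$. Organizing the whole argument around the single identity $\mathfrak jX=-J_2XJ_2$ (and its counterpart on the $Y$ side, with the $\pi$-contraction in the role of $J_2$) keeps these signs under control and presents the three statements as a single fact viewed from three angles.
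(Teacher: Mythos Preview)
Your proposal is correct and essentially matches the paper's approach: the paper's proof simply says ``differentiate \eqref{def-Spn} at the identity'' for (1)$\iff$(2) --- exactly your parenthetical alternative --- and then defines $Y_{\sigma\beta}=-\pi_{\sigma\tau}X^{\tau}_{\beta}$ for (3), which is your inversion of $\pi$. Your main route through the identity $\mathfrak jX=-J_2XJ_2$ is a mild geometric elaboration of the same linearization, trading the coordinate equations \eqref{def-Spn} for the equivalent description of $Sp(n)$ via commutation with $J_2$, but the content is the same.
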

 \begin{proof}
The equivalence between (1) and (2) follows by differentiating \eqref{def-Spn} at the identity.   
To obtain (3), we define the tensor $\{Y_{\sigma\beta}\}$ by $Y_{\sigma\beta}=-\pi_{\sigma\tau}X^{\tau}_{\beta}=-\pi_{\sigma}^{\bar\tau}X_{\beta\bar\tau}.$
 
 \end{proof}

\subsection{Quaternionic contact manifolds}

Let $M$ be a $(4n+3)$-dimensional manifold and $H$ be a smooth distribution on $M$ of codimension three. 
The pair $(M,H)$ is said to be a quaternionic contact (abbr. qc) structure if around each point of $M$ there exist 1-forms $\hat\eta_1,\hat\eta_2,\hat\eta_3$ with common kernel $H,$ a non-degenerate inner product $\hat g$ on $H$ (with or without signature), and endomorphisms $\hat I_1,\hat I_2,\hat I_3$ of $H,$ satisfying
\begin{gather}\label{def-hat-eta}
(\hat I_1)^2=(\hat I_2)^2=(\hat I_3)^2=-\text{id}_H,\qquad \hat I_1\,\hat I_2=-\hat I_2\, \hat I_1=\hat I_3,\\\nonumber
d\hat\eta_s(X,Y)=2\hat g(\hat I_sX,Y) \qquad \ \text{ for all }\ X,Y\in H.
\end{gather}

As we mentioned in the introduction, 
the qc structures may be considered as a quaternion analog of the CR manifolds of hypersurface type known from the complex analysis; one should, however, be  aware of a few differences.  First, for each qc manifold $(M,H)$, the linear span of the pointwise quaternionic structure $\hat I_1,\hat I_2,\hat I_3$ is a 3-dimensional subbundle of $End(H)$ which is uniquely determined by the distribution $H$ and does not need to be prescribed in advance. Secondly, there is an essential part in the definition of a CR manifold, called integrability condition, that requires for the holomorphic CR distribution to satisfy the Frobenius condition. The qc counterpart to the latter is the existence of Reeb vector fields, namely a triple $\hat\xi_1,\hat\xi_2,\hat\xi_3$ of vector fields on $M$ satisfying for all $X\in H$,
\begin{equation}\label{Reeb}
\hat\eta_s(\hat\xi_t)=\delta^s_t,\qquad d\hat\eta_s(\hat\xi_t,X)=-d\hat\eta_t(\hat\xi_s,X)
\end{equation}
($\delta^s_t$ being the Kronecker delta). As  shown in \cite{Biq1}, the Reeb vector fields always exist if $\dim(M)>7$. In the seven dimensional case this is an additional integrability condition on the qc structure (cf. \cite{D}) which we will assume to be satisfied.  

Let us define the 2-forms $\hat\omega_1,\hat\omega_2,\hat\omega_3$  by 
\begin{equation}\label{def-omega}
\hat\xi_s\lrcorner\hat\omega_t=0,\qquad \hat\omega_s(X,Y)=2\hat g(\hat I_sX,Y),\qquad X,Y\in H.
\end{equation}
 Then, as it can be easily verified, the existence of Reeb vector fields allows us to express the exterior derivatives $d\hat\eta_s$ in the form  
\begin{equation}\label{integ}
d\hat\eta_s=-\hat\alpha_{ts}\wedge\hat\eta_t +2\hat\omega_s,\qquad \hat\alpha_{st}=-\hat\alpha_{ts}.
\end{equation}
Explicitly, the one-forms $\hat\alpha_{st}$  are given by
\begin{equation*}
\begin{cases}
\hat \alpha_{12}=\hat\xi_1\lrcorner d\hat\eta_2\ +\ \frac{1}{2}\Big(-d\hat\eta_1(\hat\xi_2,\hat\xi_3)+d\hat\eta_2(\hat\xi_3,\hat\xi_1)+d\hat\eta_3(\hat\xi_1,\hat\xi_2)\Big)\hat\eta_3\ +\ d\hat\eta_1(\hat\xi_1,\hat\xi_2)\hat\eta_1\\
\hat \alpha_{23}=\hat\xi_2\lrcorner d\hat\eta_3\ +\ \frac{1}{2}\Big(d\hat\eta_1(\hat\xi_2,\hat\xi_3)-d\hat\eta_2(\hat\xi_3,\hat\xi_1)+d\hat\eta_3(\hat\xi_1,\hat\xi_2)\Big)\hat\eta_1\ +\ d\hat\eta_2(\hat\xi_2,\hat\xi_3)\hat\eta_2\\
\hat \alpha_{31}=\hat\xi_3\lrcorner d\hat\eta_1\ +\ \frac{1}{2}\Big(d\hat\eta_1(\hat\xi_2,\hat\xi_3)+d\hat\eta_2(\hat\xi_3,\hat\xi_1)-d\hat\eta_3(\hat\xi_1,\hat\xi_2)\Big)\hat\eta_2\ +\ d\hat\eta_3(\hat\xi_3,\hat\xi_1)\hat\eta_3
\end{cases}.
\end{equation*}

\section{Solution to the quaternionic contact equivalence problem}\label{sec_eq_prob}

 It is well known that to each qc manifold $(M,H)$ one can associate a unique, up to a diffeomorphism, regular, normal Cartan geometry, i.e., a certain principle bundle $\mathcal P_1\rightarrow M$ endowed with a Cartan connection that satisfies some natural normalization conditions (see the Appendix and the references therein for more details on the topic). Our goal here is to provide an explicit construction for both the bundle and the connection in terms of geometric data generated entirely by the qc structure of $M$. We are using essentially the original Cartan's method of equivalence that had been  applied later with a great success by Chern and Moser in \cite{ChM} for solving the respective equivalence problem in the CR case. The method is based entirely on classical exterior calculus and does not require any preliminary knowledge concerning the theory of parabolic geometries or the related Lie algebra cohomology. The main result here is Theorem~\ref{Theorem_1}.

Let  $\hat\eta_1,\hat\eta_2,\hat\eta_3, \hat I_1, \hat I_2, \hat I_3, \hat g$ be as in \eqref{def-hat-eta}. If $\tilde\eta_1,\tilde\eta_2,\tilde\eta_3$ are any (other) 1-forms satisfying  \eqref{def-hat-eta} for some  symmetric and positive definite $\tilde g\in H^*\otimes H^*$ and endomorphisms $\tilde I_s \in End(H)$ in place of $\hat g$ and $\hat I_s$ respectively, then it is known (see for example the appendix of \cite{IMV5}) that there exists a positive real-valued function $\mu$ and an $SO(3)$-valued function $\Psi=(a_{st})_{3\times 3}$ so that
\begin{equation*}
\tilde \eta_s=\mu\, a_{ts}\, \hat\eta_t,\qquad \tilde g=\mu \, \hat g,\qquad \tilde I_s=a_{ts}\, \hat I_t.
\end{equation*}

In intrinsic terms, this means that we have a principle bundle $P_o$ over $M$ with structure group $CSO(3)=\R^{+}\times SO(3)$ whose local sections are exactly the triples of
1-forms $(\hat\eta_1,\hat\eta_2,\hat\eta_3)$ satisfying \eqref{def-hat-eta}. The functions $\mu\in \R^{+}$ and $(a_{st})_{3\times 3}\in SO(3)$ may be considered as local fiber coordinates on $P_o$ with respect to a fixed local section $(\hat\eta_1,\hat\eta_2,\hat\eta_3)$. On $P_o$, we have globally defined intrinsic one-forms $\eta_1,\eta_2,\eta_3$ which,  in terms of the local fiber coordinates, have the expression 
\begin{equation}\label{def-eta}
\eta_s=\mu\, a_{ts}\,  \pi_o^*(\hat\eta_t),
\end{equation}
with $\pi_o: P_o\rightarrow M$ being the principle bundle projection. 
We will call a differential forms on $P_o$ semibasic if its contraction with any vector field tangent to the fibers of $\pi_o$ vanishes.

\begin{lemma}\label{Lex}
In a neighborhood of each point of $P_o$, we can find  real one-forms $\varphi_0,\varphi_1,\varphi_2,\varphi_3$ and semibasic complex one-forms $\theta^{\alpha}$ so that  
\begin{equation*}
\begin{cases}
d\eta_1=-\varphi_0\wedge\eta_1-\varphi_2\wedge\eta_3+\varphi_3\wedge\eta_2+2i g_{\alpha\bar\beta}\,\theta^{\alpha}\wedge\theta^{\bar\beta}\\
d\eta_2=-\varphi_0\wedge\eta_2-\varphi_3\wedge\eta_1+\varphi_1\wedge\eta_3+\pi_{\alpha\beta}\,\theta^{\alpha}\wedge\theta^{\beta}+\pi_{\bar\alpha\bar\beta}\,\theta^{\bar\alpha}\wedge\theta^{\bar\beta}\\
d\eta_3=-\varphi_0\wedge\eta_3-\varphi_1\wedge\eta_2 + \varphi_2\wedge\eta_1-i\pi_{\alpha\beta}\,\theta^{\alpha}\wedge\theta^{\beta}+i\pi_{\bar\alpha\bar\beta}\,\theta^{\bar\alpha}\wedge\theta^{\bar\beta},
\end{cases}
\end{equation*}
where $g_{\alpha\bar\beta}=g_{\bar\beta\alpha}$ and $\pi_{\alpha\beta}=-\pi_{\beta\alpha}$ are the same (fixed) constants as in Section~\ref{prelim}. 
\end{lemma}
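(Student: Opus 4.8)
The plan is to work on the total space $P_o$ and to exploit the two structural facts already established: on the one hand, the tautological forms $\eta_1,\eta_2,\eta_3$ on $P_o$ are given by \eqref{def-eta}, and on the other hand, upstairs on $M$, every local section satisfies the qc relations \eqref{def-hat-eta} together with the existence of Reeb fields, hence the normalized form \eqref{integ} of $d\hat\eta_s$. First I would pull back \eqref{integ} along a local section and substitute into the exterior derivative of \eqref{def-eta}. Differentiating $\eta_s=\mu\,a_{ts}\,\pi_o^*\hat\eta_t$ produces three kinds of terms: a term $d(\log\mu)\wedge\eta_s$ coming from the conformal factor; terms $da_{ts}\,a_{us}^{-1}\wedge\eta_u$ coming from the $SO(3)$-rotation (these are controlled by the Maurer–Cartan form of $SO(3)$, which is a $\mathfrak{so}(3)$-valued one-form, i.e. three independent one-forms arranged antisymmetrically); and the term $\mu\,a_{ts}\,\pi_o^*(d\hat\eta_t)$, into which I feed \eqref{integ}. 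The antisymmetric $3\times3$ combination of the first two kinds of terms is exactly what I will name $\varphi_0$ (the trace/conformal part, paired with $\eta_s$ itself) and $\varphi_1,\varphi_2,\varphi_3$ (the off-diagonal rotational part, paired with $\eta_t$, $t\neq s$, with the sign pattern displayed in the statement); the pullbacks of $\hat\alpha_{st}$ get absorbed into these same $\varphi$'s since they already have the antisymmetric shape in \eqref{integ}. This shows the one-forms with the asserted wedge-with-$\eta$ structure exist; they are only defined up to adding semibasic one-forms that annihilate the relevant combinations of the $\eta$'s, which is the usual ambiguity in such a prolongation step and does not affect the claim.

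The remaining content is the semibasic part: after the $\varphi\wedge\eta$ terms are split off, what is left of $d\eta_s$ is a semibasic two-form, namely $\mu\,a_{ts}$ times the pullback of $2\hat\omega_t$, where $\hat\omega_t$ is the fundamental four-real-dimensional-fibre two-form built from $\hat g$ and $\hat I_t$ as in \eqref{def-omega}. The three two-forms $\hat\omega_1,\hat\omega_2,\hat\omega_3$ span, at each point, the space of $(\hat I_1,\hat I_2,\hat I_3)$-compatible forms on $H$, and under the rotation $(a_{st})$ they transform in the standard $3$-dimensional representation of $SO(3)$ while $\mu$ rescales them. Now I invoke the linear-algebra setup of Section~\ref{prelim}: the triple $(\hat g,\hat I_1,\hat I_2,\hat I_3)$ on $H$, rescaled by $\mu$ and rotated by $(a_{st})$, is isomorphic to the standard model $(\langle,\rangle,J_1,J_2,J_3)$ on $\R^{4n}$, so there is a (semibasic, fibrewise-defined) complex coframe $\theta^\alpha$ on $H^*$, adapted to the fixed basis \eqref{fixed-basis}, in which $\langle,\rangle = g_{\alpha\bar\beta}\,\theta^\alpha\otimes\theta^{\bar\beta}+g_{\bar\alpha\beta}\,\theta^{\bar\alpha}\otimes\theta^{\beta}$ and $\pi=\pi_{\alpha\beta}\,\theta^\alpha\wedge\theta^\beta$. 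In that coframe the $J_1$-Kähler form is $2i\,g_{\alpha\bar\beta}\,\theta^\alpha\wedge\theta^{\bar\beta}$, while the $(2,0)+(0,2)$ form $\pi$ and its imaginary partner $J_1$-rotate $\pi$ supply $\pi_{\alpha\beta}\theta^\alpha\wedge\theta^\beta+\pi_{\bar\alpha\bar\beta}\theta^{\bar\alpha}\wedge\theta^{\bar\beta}$ and $-i\pi_{\alpha\beta}\theta^\alpha\wedge\theta^\beta+i\pi_{\bar\alpha\bar\beta}\theta^{\bar\alpha}\wedge\theta^{\bar\beta}$; these are precisely $2\hat\omega_1,2\hat\omega_2,2\hat\omega_3$ after the identification (up to the choice of which $\hat I$ plays the role of $J_1$, which one fixes once and for all). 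Substituting back gives the three displayed equations.

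The one genuinely delicate point is bookkeeping the $SO(3)\times\R^+$-equivariance so that the $\theta^\alpha$ produced fibrewise fit together into honest semibasic one-forms on a neighborhood in $P_o$, and so that the rotation of the $\hat\omega_t$ is exactly cancelled against the rotation $a_{ts}$ in \eqref{def-eta}, leaving the constant coefficients $g_{\alpha\bar\beta}$, $\pi_{\alpha\beta}$ on the right-hand sides rather than point-dependent ones. Concretely, I would first choose an adapted coframe for a reference section, then check that moving along the fibre of $P_o$ acts on $(\eta_s,\theta^\alpha)$ by an element of the structure group $\mathrm{CSO}(3)$ together with a compensating $Sp(n)$-transformation of the $\theta^\alpha$ (using Lemma~\ref{sp(n)} and the characterization \eqref{def-Spn} of $Sp(n)$ as exactly the transformations preserving both $g_{\alpha\bar\beta}$ and $\pi_{\alpha\beta}$), and that this is enough freedom to keep the right-hand sides in the stated normal form. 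This is the main obstacle; the rest is the routine substitution and sign-chasing indicated above. Note the statement only asserts local existence, so no global patching is needed — it suffices to produce the forms on a neighborhood of an arbitrary point, which the above construction does directly.
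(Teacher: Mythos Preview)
Your proposal is correct and follows essentially the same route as the paper: differentiate \eqref{def-eta}, use \eqref{integ} to replace $d\hat\eta_t$, collect the $d\mu$, $da_{ts}$, and $\hat\alpha_{st}$ contributions into the antisymmetric array that becomes $\varphi_0,\varphi_1,\varphi_2,\varphi_3$, and identify the residual semibasic piece $2\mu\,a_{ts}\,\hat\omega_t$ with the standard expressions in $g_{\alpha\bar\beta}$ and $\pi_{\alpha\beta}$ via an adapted coframe $\theta^\alpha$.

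The only place where the paper's execution is cleaner than what you sketch is your final ``delicate point.'' You propose to fix an adapted coframe for a reference section and then verify equivariance along the fibre. The paper avoids this: it works directly on $P_o$ by pulling back $H$ to a bundle $\mathcal H\to P_o$, defines on each fibre $\mathcal H_p$ the point-dependent structures $I_s=a_{ts}\hat I_t$ and $g=\mu\hat g$ (so the rotation and rescaling are already built in), and then chooses $\theta^\alpha$ to be an adapted coframe for \emph{these} structures. The identities $2\mu\,a_{s1}\hat\omega_s=2ig_{\alpha\bar\beta}\theta^\alpha\wedge\theta^{\bar\beta}$, etc., then follow from a one-line computation, with no separate equivariance check. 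To turn the $\theta^\alpha$ into one-forms on $P_o$, the paper uses the Reeb fields $\hat\xi_s$ to split $T_{\pi_o(p)}M=H_{\pi_o(p)}\oplus\mathrm{span}\{\hat\xi_s\}$ and composes $T_pP_o\to T_{\pi_o(p)}M\to H_{\pi_o(p)}\cong\mathcal H_p$. Your version works too, but this direct construction is shorter and sidesteps the bookkeeping you flagged as the main obstacle.
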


\begin{proof}

Let us consider the distribution $H\subset TM$ as a vector bundle over $M$ and take $\mathcal H\rightarrow P_o$ to be the corresponding  pull-back bundle via $\pi_o$.
To each point $p\in P_o$, we can associate a natural triple of endomorphisms $I_1,I_2,I_3$ and a symmetric 2-tensor $g$ of the fibers  $\mathcal H_p$ of $\mathcal H$ given by 
\begin{equation}
I_s= a_{ts}\, \hat I_t,\qquad g=\mu\, \hat g
\end{equation}
(with the obvious identification $\mathcal H_p\cong H_{\pi_o(p)}$). Then, 
\begin{equation}
(I_1)^2=(I_2)^2=(I_3)^2=-\text{id}_{\mathcal H},\qquad I_1\,I_2=-I_2\, I_1=I_3.
\end{equation}
The complexification of $\mathcal H_p$ (which we will denote again by $\mathcal H_p$) splits as $\mathcal H_p=W_p\oplus\overline W_p$ with $W_p$ and $\overline W_p$ being the eigenspaces of $+i$ and $-i$ with respect to the endomorphism $I_1$.  We denote by $\pi$ the skew-symmetric 2-tensor on $\mathcal H$ given by 
$$\pi(u,v)=g(I_2u,v)+ig(I_3u,v),\qquad u,v\in \mathcal H,$$ 
which is easily seen to be of type $(2,0)$ with respect to $I_1$, i.e., we have $\pi(I_1u,v)=\pi(u,I_1v)=i\pi(u,v).$

Let us pick a local coframing 
\begin{equation}
\{\theta^{\alpha}\in W^*,\theta^{\bar\alpha}\in \overline{W^*}\},\qquad \theta^{\bar\alpha}=\overline{\theta^{\alpha}}
\end{equation}
for the (complexified) vector bundle $\mathcal H$ so that
\begin{equation}
g=g_{\alpha\bar\beta}\theta^{\alpha}\otimes\theta^{\bar\beta}+g_{\bar\beta\alpha}\theta^{\bar\beta}\otimes\theta^{\alpha} \quad \text{and}\quad \pi=\pi_{\alpha\beta}\theta^{\alpha}\wedge\theta^{\beta},
\end{equation}
where $g_{\alpha\bar\beta}=g_{\bar\beta\alpha}$ and $\pi_{\alpha\beta}=-\pi_{\beta\alpha}$ are given by \eqref{constants} (this is always possible by a standard linear algebra argumentation).

The two-forms $\hat\omega_1,\hat\omega_2,\hat\omega_3$ given by \eqref{def-omega} may be regarded as two-tensors on the fibers of $\mathcal H$. We will need the following  identities: 
\begin{equation}\label{hat-identities}
\begin{cases}
2\mu a_{s1}\,\hat\omega_s=2i g_{\alpha\bar\beta}\theta^{\alpha}\wedge\theta^{\bar\beta}\\
2\mu a_{s2}\,\hat\omega_s=\pi_{\alpha\beta}\,\theta^{\alpha}\wedge\theta^{\beta}+\pi_{\bar\alpha\bar\beta}\,\theta^{\bar\alpha}\wedge\theta^{\bar\beta}\\
2\mu a_{s3}\,\hat\omega_s=-i\pi_{\alpha\beta}\,\theta^{\alpha}\wedge\theta^{\beta}+i\pi_{\bar\alpha\bar\beta}\,\theta^{\bar\alpha}\wedge\theta^{\bar\beta}.
\end{cases}
\end{equation}
Indeed, if $u,v\in \mathcal H_p$, then
\begin{equation*}
2\mu a_{s1}\,\hat\omega_s(u,v)=2g(a_{s1}\,\hat I_s u, v)=2g(I_1u,v)=2ig_{\alpha\bar\beta}\theta^{\alpha}(u)\theta^{\bar\beta}(v)-2ig_{\bar\alpha\beta}\theta^{\bar\alpha}(u)\theta^{\beta}(v)
\end{equation*}
gives the first identity in the list. The second identity follows by the computation
\begin{equation*}
2\mu a_{s2}\,\hat\omega_s(u,v)=2g(a_{s2}\,\hat I_s u, v)=2g(I_2u,v)=\pi(u,v)+\overline{\pi(u,v)}.
\end{equation*}
The third identity we obtain similarly by 
\begin{equation*}
2\mu a_{s3}\,\hat\omega_s(u,v)=2g(a_{s3}\,\hat I_s u, v)=2g(I_3u,v)=-i\pi(u,v)-\overline{i\pi(u,v)}.
\end{equation*}

 Let $\hat\xi_1,\hat\xi_2,\hat\xi_3$ be the Reeb vector fields corresponding to $\hat\eta_1,\hat\eta_2,\hat\eta_3$ (cf. \eqref{Reeb}). For each $p\in P_o$, we have the map 
\begin{equation}\label{P_o-H}
T_pP_o\overset{\pi_o}{\longrightarrow} T_{\pi_o(p)}M\longrightarrow H_{\pi_o(p)}\overset{\cong}{\longrightarrow} \mathcal H_p,
\end{equation}
where the second arrow denotes the projection on the first factor in $T_{\pi_o(p)}M=H_{\pi_o(p)}\oplus span\{\hat\xi_1,\hat\xi_2,\hat\xi_3\}$. By \eqref{P_o-H}, we can consider $\theta^{\alpha}$, $\theta^{\bar\alpha}$ and $\hat\omega_s$ as 1-forms on $P_o$. Then, clearly, the identities \eqref{hat-identities} remain valid. Notice also that,  since $(a_{st})_{3\times 3}\in SO(3)$, we have 
 $a_{sl}\, a_{tl}=\delta_{st}$ and thus the expression $a_{sl}\,da_{tl}$ is skew-symmetric in $s,t$. By differentiating \eqref{def-eta}, we get
\begin{multline}\label{comp-Lex}
d\eta_s{\ =\ }d\mu\wedge a_{ts} \hat\eta_t + \mu\, da_{ts}\wedge \hat\eta_t + \mu a_{ts}\, \, (\underbrace{-\hat\alpha_{lt}\hat\eta_l+2\hat\omega_t}_\text{by \eqref{integ}})\\ 
= \mu a_{tl}\,\Big(\mu^{-1}\delta_{ls}d\mu + a_{kl}da_{ks}-a_{kl}a_{ms}\hat\alpha_{km}\Big)\wedge \hat\eta_t + 2\mu a_{ts}\, \hat\omega_t\\
=\mu^{-1}d\mu\wedge \eta_s+a_{kl}( da_{ks}-a_{ms}\hat\alpha_{km})\wedge \eta_l + 2\mu a_{ts}\, \hat\omega_t.
\end{multline}
Since $a_{kl}( da_{ks}-a_{ms}\hat\alpha_{km})$ is skew symmetric in $l,s$, it can be represented by a triple of one-forms $\varphi_1,\varphi_2,\varphi_3$. Explicitly, we define
\begin{equation*}
\begin{cases}
\varphi_0=-\mu^{-1}d\mu\\
\varphi_1=-a_{k2}( da_{k3}-a_{m3}\hat\alpha_{km})\\
\varphi_2=-a_{k3}( da_{k1}-a_{m1}\hat\alpha_{km})\\
\varphi_3=-a_{k1}( da_{k2}-a_{m2}\hat\alpha_{km}).
\end{cases}
\end{equation*} 
Then, the Lemma follows by \eqref{comp-Lex} and \eqref{hat-identities}. 

\end{proof}

\begin{lemma}\label{P_o-transformations} If $\varphi_0,\varphi_1,\varphi_2,\varphi_3,\theta^{\alpha}$ are any one-forms that satisfy the assertion of Lemma~\ref{Lex}, then $\eta_1,\eta_2,\eta_3,\theta^{\alpha},\theta^{\bar\alpha},\varphi_0,\varphi_1,\varphi_2,\varphi_3$ are pointwise linearly independent.

Any other one-forms $\tilde\varphi_0,\tilde\varphi_1,\tilde\varphi_2,\tilde\varphi_3, \tilde\theta^{\alpha}$ will satisfy the assertion of Lemma~\ref{Lex} if and only if they are given by the formulas
\begin{equation}\label{str_eq_P_o}
\begin{cases}
\tilde\theta^{\alpha}=U^{\alpha}_{\beta}\theta^\beta + ir^\alpha\eta_1+\pi^{\alpha}_{\bar\sigma}r^{\bar\sigma}(\eta_2+i\eta_3)\\
\tilde\varphi_0=\varphi_0+2U_{\beta\bar\sigma}r^{\bar\sigma}\theta^{\beta}+2U_{\bar\beta\sigma}r^{\sigma}\theta^{\bar\beta}+\lambda_1\eta_1+\lambda_2\eta_2+\lambda_3\eta_3\\
\tilde\varphi_1=\varphi_1-2iU_{\beta\bar\sigma}r^{\bar\sigma}\theta^{\beta}+2iU_{\bar\beta\sigma}r^{\sigma}\theta^{\bar\beta}+2r_{\sigma}r^{\sigma}\eta_1-\lambda_3\eta_2+\lambda_2\eta_3,\\
\tilde\varphi_2=\varphi_2-2\pi_{\sigma\tau}U^{\sigma}_{\beta}r^{\tau}\theta^{\beta}-2\pi_{\bar\sigma\bar\tau}U^{\bar\sigma}_{\bar\beta}r^{\bar\tau}\theta^{\bar\beta}+\lambda_3\eta_1+2r_{\sigma}r^{\sigma}\eta_2-\lambda_1\eta_3,\\
\tilde\varphi_3=\varphi_3+2i\pi_{\sigma\tau}U^{\sigma}_{\beta}r^{\tau}\theta^{\beta}-2i\pi_{\bar\sigma\bar\tau}U^{\bar\sigma}_{\bar\beta}r^{\bar\tau}\theta^{\bar\beta}-\lambda_2\eta_1+\lambda_1\eta_2+2r_{\sigma}r^{\sigma}\eta_3,
\end{cases}
\end{equation}
where $U^{\alpha}_{\beta}, r^\alpha, \lambda_s$ are some appropriate functions; $\lambda_1,\lambda_2,\lambda_3$ are real, and $\{U^{\alpha}_{\beta}\}$ satisfy \eqref{def-Spn}, i.e., $\{U^{\alpha}_{\beta}\}\in Sp(n)\subset End(\R^{4n})$.
\end{lemma}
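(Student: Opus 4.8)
The plan is to read the three structure equations of Lemma~\ref{Lex} as identities among $2$-forms on $P_o$ and to extract everything from them by linear algebra.

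For the independence statement, observe first that $\eta_1,\eta_2,\eta_3$ are pointwise independent, being the invertible combinations $\mu\,a_{ts}\,\pi_o^*\hat\eta_t$ of the pullbacks of the independent forms $\hat\eta_t$. For the particular one-forms $\varphi_0,\dots,\varphi_3,\theta^\alpha$ exhibited in the proof of Lemma~\ref{Lex} one checks directly that $\{\eta_s,\theta^\alpha,\theta^{\bar\alpha}\}$ is a pointwise basis of the semibasic covectors --- the $\theta^\alpha$ form a coframing of the pulled-back distribution $\mathcal H$, which is complementary to $\mathrm{span}\{\hat\xi_s\}$ --- whereas modulo the semibasic forms one has $\varphi_0\equiv-d\log\mu$ while $\varphi_1,\varphi_2,\varphi_3$ restrict to the $CSO(3)$-fibres as the invariant Maurer--Cartan forms; since $\dim P_o = 4n+7 = 3+4n+4$, these $4n+7$ forms are a global coframe. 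Granting the transformation rule \eqref{str_eq_P_o}, it displays any other admissible system as an invertible, block-triangular change of this coframe (the diagonal block being $U\in Sp(n)$), so independence then holds for every solution.

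Next I would show that the difference of any two solutions has \emph{semibasic} $\varphi$-parts. Given $(\varphi_i,\theta^\alpha)$ and $(\tilde\varphi_i,\tilde\theta^\alpha)$, subtract the corresponding structure equations and contract with a vector $v$ tangent to a fibre of $\pi_o$. Since $\eta_s$ and all the $\theta$'s and $\tilde\theta$'s are semibasic, each $\tilde\theta\wedge\tilde\theta$-term contracts to $0$ and each term $\psi_i\wedge\eta_s$, with $\psi_i:=\tilde\varphi_i-\varphi_i$, contracts to $\psi_i(v)\,\eta_s$; each of the three equations therefore collapses to a relation of the form $0=\sum\pm\psi_i(v)\,\eta_{\sigma(i)}$, and independence of $\eta_1,\eta_2,\eta_3$ forces $\psi_i(v)=0$ for all $i$. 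Thus each $\psi_i$ is semibasic, and we may expand $\tilde\theta^\alpha = U^\alpha_\beta\theta^\beta + V^\alpha_{\bar\beta}\theta^{\bar\beta}+p^\alpha_s\,\eta_s$ and $\psi_i = Q_{i\beta}\theta^\beta+\overline{Q_{i\beta}}\,\theta^{\bar\beta}+S_{is}\,\eta_s$ (with $S_{is}$ real) in the coframe $\{\eta_s,\theta^\alpha,\theta^{\bar\alpha}\}$.

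Substituting these expansions into the difference of the structure equations and matching the coefficients of the basis $2$-forms reduces the lemma to algebra. The pure $\theta\wedge\theta$ coefficients receive contributions only from $\tilde\theta\wedge\tilde\theta$, and reading them off in the three equations shows that $\theta^\alpha\mapsto U^\alpha_\beta\theta^\beta+V^\alpha_{\bar\beta}\theta^{\bar\beta}$ preserves both the hermitian form $g_{\alpha\bar\beta}$ and the $(2,0)$-form $\pi_{\alpha\beta}$; since $\pi_{\alpha\beta}U^\alpha_\gamma U^\beta_\delta=\pi_{\gamma\delta}$ forces $\det U=\pm1$, the mixed identity $\pi_{\alpha\beta}U^\alpha_\gamma V^\beta_{\bar\delta}=0$ gives $V=0$, and then $U\in Sp(n)$ by \eqref{def-Spn}. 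The $\theta\wedge\eta$ coefficients next express each $Q_{i\beta}$ through $U$ and the $p^\alpha_s$ and, on comparing the three equations, force the $p^\alpha_s$ to come from a single vector $r^\alpha$ via $p^\alpha_1=ir^\alpha$, $p^\alpha_2=\pi^\alpha_{\bar\sigma}r^{\bar\sigma}$, $p^\alpha_3=i\pi^\alpha_{\bar\sigma}r^{\bar\sigma}$ (here $\pi^\alpha_{\bar\sigma}\pi^{\bar\sigma}_\beta=-\delta^\alpha_\beta$ is used); finally the $\eta\wedge\eta$ coefficients determine the $S_{is}$ in terms of three real parameters $\lambda_1,\lambda_2,\lambda_3$ and the quadratic term $r_\sigma r^\sigma$, with all remaining relations being automatic consequences of $g^{\sigma\bar\tau}\pi_{\alpha\sigma}\pi_{\bar\tau\bar\beta}=-g_{\alpha\bar\beta}$ and Lemma~\ref{sp(n)}. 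This yields exactly \eqref{str_eq_P_o}, and the converse follows by substituting \eqref{str_eq_P_o} back into the structure equations and invoking the same identities. I expect the main obstacle to be precisely the bookkeeping in the $\theta\wedge\eta$ and $\eta\wedge\eta$ blocks: one must verify that the a priori independent data $p^\alpha_1,p^\alpha_2,p^\alpha_3$ together with the $\eta$-parts of the four $\psi_i$ collapse onto just $(r^\alpha,\lambda_1,\lambda_2,\lambda_3)$, and that the resulting overdetermined system of consistency conditions is genuinely solvable --- this is where the quaternionic compatibility between $g$ and $\pi$ is indispensable.
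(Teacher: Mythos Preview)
Your approach is essentially the same as the paper's: fix the explicit solution from the proof of Lemma~\ref{Lex} (for which the coframe property is clear), subtract the structure equations, show the differences $\tilde\varphi_i-\varphi_i$ are semibasic, expand everything in the basis $\{\eta_s,\theta^\alpha,\theta^{\bar\alpha}\}$, and read off the constraints block by block ($\theta\wedge\theta$, then $\theta\wedge\eta$, then $\eta\wedge\eta$), with independence for arbitrary solutions following at the end from the block-triangular invertibility of the resulting transformation. The only cosmetic difference is that you contract with fibre-tangent vectors to obtain semibasicness of the $\psi_i$, whereas the paper wedges the difference equations with $\eta_2\wedge\eta_3\wedge\theta^1\wedge\cdots\wedge\theta^{\bar{2n}}$ etc.; both arguments are equivalent and yours is arguably cleaner.
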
 
\begin{proof} Let us begin by taking $\varphi_0,\varphi_1,\varphi_2,\varphi_3,\theta^{\alpha}$ to be the one-forms constructed in the prof of Lemma~\ref{Lex}. For these	one-forms, it is obvious that 
$\{\eta_1,\eta_2,\eta_3,\theta^{\alpha},\theta^{\bar\alpha},\phi_0,\phi_1,\phi_2,\phi_3\}$ is a coframing for $TP_o.$ Let $\tilde\varphi_0,\tilde\varphi_1,\tilde\varphi_2,\tilde\varphi_3,\tilde\theta^{\alpha}$ be any other one-forms satisfying the assertion of Lemma~\ref{Lex}. Then, by subtracting the corresponding equations, we obtain 
\begin{equation}\label{differences-eq}
\begin{split}
0=-&(\tilde\varphi_0-\varphi_0)\wedge\eta_1-(\tilde\varphi_2-\varphi_2)\wedge\eta_3+(\tilde\varphi_3-\varphi_3)\wedge\eta_2+2i g_{\alpha\bar\beta}\,(\tilde\theta^{\alpha}\wedge\tilde\theta^{\bar\beta}-\theta^{\alpha}\wedge\theta^{\bar\beta})\\
0 =-&(\tilde\varphi_0-\varphi_0)\wedge\eta_2-(\tilde\varphi_3-\varphi_3)\wedge\eta_1+(\tilde\varphi_1-\varphi_1)\wedge\eta_3+\pi_{\alpha\beta}\,(\tilde\theta^{\alpha}\wedge\tilde\theta^{\beta}-\theta^{\alpha}\wedge\theta^{\beta})\\ &+\pi_{\bar\alpha\bar\beta}\,(\tilde\theta^{\bar\alpha}\wedge\tilde\theta^{\bar\beta}-\theta^{\bar\alpha}\wedge\theta^{\bar\beta})\\
0=-&(\tilde\varphi_0-\varphi_0)\wedge\eta_3-(\tilde\varphi_1-\varphi_1)\wedge\eta_2 + (\tilde\varphi_2-\varphi_2)\wedge\eta_1-i\pi_{\alpha\beta}\,(\tilde\theta^{\alpha}\wedge\tilde\theta^{\beta}-\theta^{\alpha}\wedge\theta^{\beta})\\&+i\pi_{\bar\alpha\bar\beta}\,(\tilde\theta^{\bar\alpha}\wedge\tilde\theta^{\bar\beta}-\theta^{\bar\alpha}\wedge\theta^{\bar\beta}).
\end{split} 
\end{equation}
Since, by assumption, $\tilde\theta^\alpha$ are semibasic, i.e., their contractions with vector fields tangent to to the fibers of $\pi_o$ vanish, we have that
\begin{equation}\label{differ-theta}
\tilde\theta^\alpha=U^\alpha_\beta\theta^\beta+U^\alpha_{\bar\beta}\theta^{\bar\beta}+A^{\alpha}_s\eta_s
\end{equation}
for some appropriate coefficients $U^\alpha_\beta, U^\alpha_{\bar\beta}, A^{\alpha}_s$.

Wedging the first identity of \eqref{differences-eq} with $\eta_2\wedge\eta_3\wedge\theta^1\wedge\dots\wedge\theta^{2n}\wedge\overline{\theta^1}\wedge\dots\wedge\overline{\theta^{2n}}$ yields
\begin{equation*}
(\tilde\varphi_0-\varphi_0)\wedge\eta_1\wedge\eta_2\wedge\eta_3\wedge\theta^1\wedge\dots\wedge\theta^{2n}\wedge\overline{\theta^1}\wedge\dots\wedge\overline{\theta^{2n}}=0
\end{equation*}
and hence $(\tilde\varphi_0-\varphi_0)\in\text{span}\{\eta_1,\eta_2,\eta_3,\theta^\alpha,\theta^{\bar\alpha}\}$. Proceeding similarly for $\tilde\varphi_1-\varphi_1,\tilde\varphi_2-\varphi_2$ and $\tilde\varphi_3-\varphi_3$, we arrive at the equations
 \begin{equation}\label{differ-phi}
 \begin{cases}
 \tilde\varphi_0=\varphi_0+b_{\alpha}\theta^\alpha+b_{\bar\alpha}\theta^{\bar\alpha}+b_s\eta_s\\
 \tilde\varphi_1=\varphi_1+c_{\alpha}\theta^\alpha+c_{\bar\alpha}\theta^{\bar\alpha}+c_s\eta_s\\
 \tilde\varphi_2=\varphi_2+d_{\alpha}\theta^\alpha+d_{\bar\alpha}\theta^{\bar\alpha}+d_s\eta_s\\
 \tilde\varphi_3=\varphi_3+e_{\alpha}\theta^\alpha+e_{\bar\alpha}\theta^{\bar\alpha}+e_s\eta_s
 \end{cases}
 \end{equation}
 with $b^\alpha, c^\alpha,d^\alpha,e^\alpha, b_s,c_s,d_s,e_s$ being some appropriate functions.
 Substituting \eqref{differ-theta} and \eqref{differ-phi} back into \eqref{differences-eq} gives
\begin{equation}\label{differ-1}
\begin{split}
0\ =\ {}&\Big(b_2+e_1 +2ig_{\alpha\bar\beta}(A_1^{\alpha}A_2^{\bar\beta}-A_2^{\alpha}A_1^{\bar\beta})\Big)\,\eta_1\wedge\eta_2-\Big(d_2+e_3-2ig_{\alpha\bar\beta}(A_2^{\alpha}A_3^{\bar\beta}-A_3^{\alpha}A_2^{\bar\beta})\Big)\,\eta_2\wedge\eta_3\\&
-\Big(b_3-d_1+2ig_{\alpha\bar\beta}(A_3^{\alpha}A_1^{\bar\beta}-A_1^{\alpha}A_3^{\bar\beta})\Big)\,\eta_3\wedge\eta_1\\&+\Big(-b_\alpha-2iU_{\alpha\sigma}A_1^{\sigma}+2iU_{\alpha\bar\sigma} A_1^{\bar\sigma}\Big)\,\theta^\alpha\wedge\eta_1
+\Big(-b_{\bar\alpha}+2iU_{\bar\alpha\bar\sigma}A_1^{\bar\sigma}-2iU_{\bar\alpha\sigma}A_1^{\sigma}\Big)\,\theta^{\bar\alpha}\wedge\eta_1\\&
+\Big(e_\alpha-2iU_{\alpha\sigma}A_2^{\sigma}+2iU_{\alpha\bar\sigma} A_2^{\bar\sigma}\Big)\,\theta^\alpha\wedge\eta_2
+\Big(e_{\bar\alpha}+2iU_{\bar\alpha\bar\sigma}A_2^{\bar\sigma}-2iU_{\bar\alpha\sigma}A_2^{\sigma}\Big)\,\theta^{\bar\alpha}\wedge\eta_2\\&
+\Big(-d_\alpha-2iU_{\alpha\sigma}A_3^{\sigma}+2iU_{\alpha\bar\sigma} A_3^{\bar\sigma}\Big)\,\theta^\alpha\wedge\eta_3
+\Big(-d_{\bar\alpha}+2iU_{\bar\alpha\bar\sigma}A_3^{\bar\sigma}-2iU_{\bar\alpha\sigma}A_3^{\sigma}\Big)\,\theta^{\bar\alpha}\wedge\eta_3
\\
&-2i\Big(g_{\alpha\bar\beta}-g_{\sigma\bar\tau}\, U^{\sigma}_{\alpha}\,U^{\bar\tau}_{\bar\beta}+g_{\sigma\bar\tau}\, U^{\sigma}_{\bar\beta}\,U^{\bar\tau}_{\alpha}\Big)\,\theta^{\alpha}\wedge\theta^{\bar\beta}\\&+ig_{\sigma\bar\tau}\Big(U^\sigma_\alpha\,U^{\bar\tau}_{\beta}-U^\sigma_\beta\,U^{\bar\tau}_{\alpha}\Big)\,\theta^\alpha\wedge\theta^{\beta}
+ig_{\sigma\bar\tau}\Big(U^\sigma_{\bar\alpha}\,U^{\bar\tau}_{\bar\beta}-U^\sigma_{\bar\beta}\,U^{\bar\tau}_{\bar\alpha}\Big)\,\theta^{\bar\alpha}\wedge\theta^{\bar\beta};
\end{split}
\end{equation}
\begin{equation}\label{differ-2}
\begin{split}
0\ =\ {}&-\Big(b_1-e_2 -2\pi_{\alpha\beta}A_1^{\alpha}A_2^{\beta}-2\pi_{\bar\alpha\bar\beta}A_1^{\bar\alpha}A_2^{\bar\beta}\Big)\,\eta_1\wedge\eta_2\\&
+\Big(b_3+c_2+2\pi_{\alpha\beta}A_2^{\alpha}A_3^{\beta}+2\pi_{\bar\alpha\bar\beta}A_2^{\bar\alpha}A_3^{\bar\beta}\Big)\,\eta_2\wedge\eta_3\\&
-\Big(c_1+e_3-2\pi_{\alpha\beta}A_3^{\alpha}A_1^{\beta}-2\pi_{\bar\alpha\bar\beta}A_3^{\bar\alpha}A_1^{\bar\beta}\Big)\,\eta_3\wedge\eta_1
\\&+\Big(-e_\alpha+2\pi_{\sigma\tau}\,U^{\sigma}_{\alpha}A_1^{\tau}+2\pi_{\bar\sigma\bar\tau}\,U^{\bar\sigma}_{\alpha}A_1^{\bar\tau}\Big)\,\theta^\alpha\wedge\eta_1
+\Big(-e_{\bar\alpha}+2\pi_{\bar\sigma\bar\tau}\,U^{\bar\sigma}_{\bar\alpha}A_1^{\bar\tau}+2\pi_{\sigma\tau}\,U^{\sigma}_{\bar\alpha}A_1^{\tau}\Big)\,\theta^{\bar\alpha}\wedge\eta_1
\\&
+\Big(-b_\alpha+2\pi_{\sigma\tau}\,U^{\sigma}_{\alpha}A_2^{\tau}+2\pi_{\bar\sigma\bar\tau}\,U^{\bar\sigma}_{\alpha}A_2^{\bar\tau}\Big)\,\theta^\alpha\wedge\eta_2
+\Big(-b_{\bar\alpha}+2\pi_{\bar\sigma\bar\tau}\,U^{\bar\sigma}_{\bar\alpha}A_2^{\bar\tau}+2\pi_{\sigma\tau}\,U^{\sigma}_{\bar\alpha}A_2^{\tau}\Big)\,\theta^{\bar\alpha}\wedge\eta_2
\\&
+\Big(c_\alpha+2\pi_{\sigma\tau}\,U^{\sigma}_{\alpha}A_3^{\tau}+2\pi_{\bar\sigma\bar\tau}\,U^{\bar\sigma}_{\alpha}A_3^{\bar\tau}\Big)\,\theta^\alpha\wedge\eta_3
+\Big(c_{\bar\alpha}+2\pi_{\bar\sigma\bar\tau}\,U^{\bar\sigma}_{\bar\alpha}A_3^{\bar\tau}+2\pi_{\sigma\tau}\,U^{\sigma}_{\bar\alpha}A_3^{\tau}\Big)\,\theta^{\bar\alpha}\wedge\eta_3
\\
&+2\Big(\pi_{\sigma\tau}\, U^{\sigma}_{\alpha}\,U^{\tau}_{\bar\beta}+\pi_{\bar\sigma\bar\tau}\, U^{\bar\sigma}_{\alpha}\,U^{\bar\tau}_{\bar\beta}\Big)\,\theta^{\alpha}\wedge\theta^{\bar\beta}
\\&-\Big(\pi_{\alpha\beta}-\pi_{\sigma\tau}U^\sigma_\alpha\,U^{\tau}_{\beta}-\pi_{\bar\sigma\bar\tau}U^{\bar\sigma}_\alpha\,U^{\bar\tau}_{\beta}\Big)\,\theta^\alpha\wedge\theta^{\beta}
-\Big(\pi_{\bar\alpha\bar\beta}-\pi_{\sigma\tau}U^\sigma_{\bar\alpha}\,U^{\tau}_{\bar\beta}-\pi_{\bar\sigma\bar\tau}U^{\bar\sigma}_{\bar\alpha}\,U^{\bar\tau}_{\bar\beta}\Big)\,\theta^{\bar\alpha}\wedge\theta^{\bar\beta};
\end{split}
\end{equation}
\begin{equation}\label{differ-3}
\begin{split}
0\ =\ {}&-\Big(c_1+d_2 +2i\pi_{\alpha\beta}A_1^{\alpha}A_2^{\beta}-2i\pi_{\bar\alpha\bar\beta}A_1^{\bar\alpha}A_2^{\bar\beta}\Big)\,\eta_1\wedge\eta_2\\&
+\Big(-b_2+c_3-2i\pi_{\alpha\beta}A_2^{\alpha}A_3^{\beta}+2i\pi_{\bar\alpha\bar\beta}A_2^{\bar\alpha}A_3^{\bar\beta}\Big)\,\eta_2\wedge\eta_3\\&
+\Big(b_1+d_3-2i\pi_{\alpha\beta}A_3^{\alpha}A_1^{\beta}+2i\pi_{\bar\alpha\bar\beta}A_3^{\bar\alpha}A_1^{\bar\beta}\Big)\,\eta_3\wedge\eta_1
\\&+\Big(d_\alpha-2i\pi_{\sigma\tau}\,U^{\sigma}_{\alpha}A_1^{\tau}+2i\pi_{\bar\sigma\bar\tau}\,U^{\bar\sigma}_{\alpha}A_1^{\bar\tau}\Big)\,\theta^\alpha\wedge\eta_1
+\Big(d_{\bar\alpha}+2i\pi_{\bar\sigma\bar\tau}\,U^{\bar\sigma}_{\bar\alpha}A_1^{\bar\tau}-2i\pi_{\sigma\tau}\,U^{\sigma}_{\bar\alpha}A_1^{\tau}\Big)\,\theta^{\bar\alpha}\wedge\eta_1
\\&
-\Big(c_\alpha+2i\pi_{\sigma\tau}\,U^{\sigma}_{\alpha}A_2^{\tau}-2i\pi_{\bar\sigma\bar\tau}\,U^{\bar\sigma}_{\alpha}A_2^{\bar\tau}\Big)\,\theta^\alpha\wedge\eta_2
-\Big(c_{\bar\alpha}-2i\pi_{\bar\sigma\bar\tau}\,U^{\bar\sigma}_{\bar\alpha}A_2^{\bar\tau}+2i\pi_{\sigma\tau}\,U^{\sigma}_{\bar\alpha}A_2^{\tau}\Big)\,\theta^{\bar\alpha}\wedge\eta_2
\\&
-\Big(b_\alpha+2i\pi_{\sigma\tau}\,U^{\sigma}_{\alpha}A_3^{\tau}-2i\pi_{\bar\sigma\bar\tau}\,U^{\bar\sigma}_{\alpha}A_3^{\bar\tau}\Big)\,\theta^\alpha\wedge\eta_3
-\Big(b_{\bar\alpha}-2i\pi_{\bar\sigma\bar\tau}\,U^{\bar\sigma}_{\bar\alpha}A_3^{\bar\tau}+2i\pi_{\sigma\tau}\,U^{\sigma}_{\bar\alpha}A_3^{\tau}\Big)\,\theta^{\bar\alpha}\wedge\eta_3
\\
&+2i\Big(-\pi_{\sigma\tau}\, U^{\sigma}_{\alpha}\,U^{\tau}_{\bar\beta}+\pi_{\bar\sigma\bar\tau}\, U^{\bar\sigma}_{\alpha}\,U^{\bar\tau}_{\bar\beta}\Big)\,\theta^{\alpha}\wedge\theta^{\bar\beta}
\\&+i\Big(\pi_{\alpha\beta}-\pi_{\sigma\tau}U^\sigma_\alpha\,U^{\tau}_{\beta}+\pi_{\bar\sigma\bar\tau}U^{\bar\sigma}_\alpha\,U^{\bar\tau}_{\beta}\Big)\,\theta^\alpha\wedge\theta^{\beta}
-i\Big(\pi_{\bar\alpha\bar\beta}+\pi_{\sigma\tau}U^\sigma_{\bar\alpha}\,U^{\tau}_{\bar\beta}-\pi_{\bar\sigma\bar\tau}U^{\bar\sigma}_{\bar\alpha}\,U^{\bar\tau}_{\bar\beta}\Big)\,\theta^{\bar\alpha}\wedge\theta^{\bar\beta}.
\end{split} 
\end{equation}

Since $\eta_1,\eta_2,\eta_3,\theta^{\alpha},\theta^{\bar\alpha}$ are pointwise linearly independent, all the coefficients in \eqref{differ-1},\eqref{differ-2} and \eqref{differ-3} must vanish.  
The vanishing of the coefficients of $\theta^\alpha\wedge\theta^\beta$ in \eqref{differ-2} and \eqref{differ-3} gives  
\begin{equation}
\pi_{\alpha\beta}=\pi_{\sigma\tau}U^\sigma_\alpha\,U^{\tau}_{\beta}
\end{equation}
This implies that  the array $\{U^\alpha_\beta\}$ corresponds to an invertible endomorphism of $\mathbb R^{4n}$. Furthermore, the vanishing of the coefficients of $\theta^\alpha\wedge\theta^{\bar\beta}$ in \eqref{differ-2} and \eqref{differ-3} yields $\pi_{\sigma\tau}\, U^{\sigma}_{\alpha}\,U^{\tau}_{\bar\beta}=0$ and hence $U^{\alpha}_{\bar\beta}=0$, since both $\pi_{\alpha\beta}$ and $U^\alpha_\beta$ are invertible. Furthermore, the vanishing of the coefficients of  $\theta^\alpha\wedge\theta^{\bar\beta}$  in \eqref{differ-1} implies 
\begin{equation}
g_{\alpha\bar\beta}=g_{\sigma\bar\tau}\, U^{\sigma}_{\alpha}\,U^{\bar\tau}_{\bar\beta}
\end{equation}
and thus $\{U^\alpha_\beta\}\in Sp(n)$. 

The vanishing of the coefficients of $\theta^{\alpha}\wedge\eta_s$ in \eqref{differ-1},\eqref{differ-2} and \eqref{differ-3} gives
\begin{equation}
\begin{cases}
b_\alpha=2iU_{\alpha\bar\sigma}A_1^{\bar\sigma}=2\pi_{\sigma\tau}U^\sigma_\alpha A_2^\tau\\
c_\alpha=-2\pi_{\sigma\tau}U^\sigma_\alpha A_3^\tau=-2i\pi_{\sigma\tau}U^\sigma_\alpha A_2^\tau\\
d_\alpha=2iU_{\alpha\bar\sigma}A_3^{\bar\sigma}=2i\pi_{\sigma\tau}U^\sigma_\alpha A_1^\tau\\
e_\alpha=-2iU_{\alpha\bar\sigma}A_2^{\bar\sigma}=2\pi_{\sigma\tau}U^\sigma_\alpha A_1^\tau
\end{cases}
\end{equation}
from which we deduce that $A_3^\alpha=iA_2^\alpha=-\pi^{\alpha}_{\bar\sigma}A_1^{\bar\sigma}$. Thus, by setting $r^\alpha\overset{def}{\ =\ }-iA_1^\alpha$, we obtain
\begin{equation}\label{r-alpha}
\begin{gathered}
A_1^\alpha=ir^\alpha,\qquad A_2^\alpha = \pi^\alpha_{\bar\sigma}r^{\bar\sigma},\qquad A_3^\alpha=i\pi^{\alpha}_{\bar\sigma}r^{\bar\sigma},\\
b_\alpha=2U_{\alpha\bar\sigma}r^{\bar\sigma},\qquad c_\alpha= -2i U_{\alpha\bar\sigma}r^{\bar\sigma},\qquad d_\alpha=-2\pi_{\sigma\tau}U^\sigma_\alpha r^\tau,\qquad 
e_\alpha=2i\pi_{\sigma\tau}U^\sigma_\alpha r^\tau.
\end{gathered}
\end{equation}

We substitute \eqref{r-alpha} back into \eqref{differ-1}, \eqref{differ-2}, \eqref{differ-3} and consider the coefficients of $\eta_1\wedge\eta_2,\eta_2\wedge\eta_3$ and $\eta_3\wedge\eta_1$  to obtain 
\begin{equation}\label{bcde-ver}
\begin{gathered}
0\ =b_2+e_1\ =\ b_3-d_1\ =\ b_1-e_2\ =\ b_3+c_2\ =\ -b_2+c_3\ =\ b_1+d_3,\\
4r_{\alpha}r^{\alpha}\ =c_1+d_2\ =\ c_1+e_3\ =\ d_2+e_3.
\end{gathered}
\end{equation}
Let us define 
\begin{equation}\label{b-s}
\lambda_1=b_1,\qquad \lambda_2=b_2,\qquad \lambda_3=b_3.
\end{equation}
Then, the equations \eqref{bcde-ver} imply that
\begin{equation}\label{cde-s}
\begin{aligned}
c_1&=2r_{\alpha}r^{\alpha},&\quad c_2&=-\lambda_3,&\quad c_3&=\lambda_2,\\
d_1&=\lambda_3,& d_2&=2r_{\alpha}r^{\alpha},& d_3&=-\lambda_1,\\
e_1&=-\lambda_2,&e_2&=\lambda_1,&e_3&=2r_{\alpha}r^{\alpha}.
\end{aligned}
\end{equation}
Now, the equations \eqref{str_eq_P_o} follow by substituting \eqref{r-alpha},\eqref{b-s} and \eqref{cde-s} into \eqref{differ-theta} and \eqref{differ-phi}. It remains only to show that the one-forms
$\eta_1,\eta_2,\eta_3,\tilde\theta^{\alpha},\tilde\theta^{\bar\alpha},\tilde\varphi_0,\tilde\varphi_1,\tilde\varphi_2,\tilde\varphi_3$ are pointwise linearly independent. Indeed, we have the relation
\begin{equation}\label{trans-P_o}
\begin{pmatrix}
\eta_1\\
\eta_2\\
\eta_3\\
\tilde\theta^\alpha  \\
\tilde\theta^{\bar\alpha}\\
\tilde\varphi_0\\
\tilde\varphi_1\\
\tilde\varphi_2\\
\tilde\varphi_3
\end{pmatrix}
=\begin{pmatrix}
 1&0&0&0&0&0&0&0&0\\
 0&1&0&0&0&0&0&0&0\\
 0&0&1&0&0&0&0&0&0\\
  ir^\alpha&\pi^\alpha_{\bar\sigma} r^{\bar\sigma}&i\pi^\alpha_{\bar\sigma} r^{\bar\sigma}&U^\alpha_\beta&0&0&0&0&0\\
  -ir^{\bar\alpha}&\pi^{\bar\alpha}_{\sigma} r^{\sigma}&-i\pi^{\bar\alpha}_{\sigma} r^{\sigma}&0&U^{\bar\alpha}_{\bar\beta}&0&0&0&0\\
 \lambda_1&\lambda_2&\lambda_3&2U_{\beta\bar\sigma}r^{\bar\sigma}&2U_{\bar\beta\sigma}r^{\sigma}&1&0&0&0\\
 2r_\sigma r^\sigma&-\lambda_3&\lambda_2&-2iU_{\beta\bar\sigma}r^{\bar\sigma}&2iU_{\bar\beta\sigma}r^{\sigma}&0&1&0&0\\
 \lambda_3&2r_\sigma r^\sigma&-\lambda_1&-2\pi_{\sigma\tau}U^{\sigma}_{\beta}r^{\tau}&-2\pi_{\bar\sigma\bar\tau}U^{\bar\sigma}_{\bar\beta}r^{\bar\tau}&0&0&1&0\\
 -\lambda_2&\lambda_1&2r_\sigma r^\sigma&2i\pi_{\sigma\tau}U^{\sigma}_{\beta}r^{\tau}&-2i\pi_{\bar\sigma\bar\tau}U^{\bar\sigma}_{\bar\beta}r^{\bar\tau}&0&0&0&1
\end{pmatrix}
\begin{pmatrix}
\eta_1\\
\eta_2\\
\eta_3\\
\theta^\beta  \\
\theta^{\bar\beta}\\
\varphi_0\\
\varphi_1\\
\varphi_2\\
\varphi_3
\end{pmatrix},
\end{equation}

which is clearly a non-singular transformation, since $\{U^\alpha_\beta\}\in Sp(n)$ is non-singular.

\end{proof}

Let us denote by $G_1$ the set of all matrices (transformations) $A(U^\alpha_\beta,r^\alpha,\lambda_s)$ given by \eqref{trans-P_o} for some real numbers $\lambda_s$, complex numbers $r^\alpha$, and $\{U^\alpha_\beta\}\in Sp(n)$ . Then, it is easy to see that $G_1$ is a group;  for any two matrices $A(U^\alpha_\beta,r^\alpha,\lambda_s), А(\tilde U^\alpha_\beta,\tilde r^\alpha,\tilde\lambda_s)\in G_1$, 
 \begin{equation*}
 A(U^\alpha_\beta,r^\alpha,\lambda_s)\cdot A(\tilde U^\alpha_\beta,\tilde r^\alpha,\tilde\lambda_s)=A(\hat U^\alpha_\beta,\hat r^\alpha,\hat\lambda_s) \in G_1,
 \end{equation*}
 where
 \begin{equation}
 \begin{cases}
 \hat U^\alpha_\beta=U^\alpha_\sigma \tilde U^\sigma_\beta\\
 \hat r^\alpha=U^\alpha_\sigma \tilde r^\sigma + r^\alpha\\
 \hat\lambda_1=\lambda_1+\tilde\lambda_1+2iU_{\alpha\bar\beta}\tilde r^\alpha r^{\bar\beta}-2iU_{\bar\alpha\beta}\tilde r^{\bar\alpha} r^{\beta}\\
 \hat\lambda_2=\lambda_2+\tilde\lambda_2+2\pi^{\bar\sigma}_{\alpha}U_{\bar\sigma\beta}\tilde r^{\alpha} r^{\beta}+2\pi^{\sigma}_{\bar\alpha}U_{\sigma\bar\beta}\tilde r^{\bar\alpha} r^{\bar\beta}\\
 \hat\lambda_3=\lambda_3+\tilde\lambda_3-2i\pi^{\bar\sigma}_{\alpha}U_{\bar\sigma\beta}\tilde r^{\alpha} r^{\beta}+2i\pi^\sigma_{\bar\alpha}U_{\sigma\bar\beta}\tilde r^{\bar\alpha} r^{\bar\beta}.
 \end{cases}
 \end{equation}
 We have also the following formula for the inverse matrix
 \begin{equation}
\Big (A(U_{\beta}^{\alpha},r^\alpha, \lambda_s)\Big)^{-1}=A\Big((U_{\dt\beta}^{\alpha}),(-U_{\dt\sigma}^{\alpha} r^\sigma), (-\lambda_s)\Big).
 \end{equation}
 (Notice that, according to our conventions, $\{U_{\dt\beta}^{\alpha}\}$ is the inverse of $\{U_{\beta}^{\alpha}=U_{\beta\dt}^{\spc\alpha}\}$; this is because  $\{U_{\beta}^{\alpha}\}\in Sp(n)$ is an orthogonal transformation of $\R^{4n}$). 
 
To describe the corresponding representation of the Lie algebra $\mathfrak g_1$, we differentiate the representation \eqref{trans-P_o} of $G_1$ at the identity matrix $Id=A(\delta^\alpha_\beta,0,0)$ and introduce the following parameterization for $\mathfrak g_1$ (being the tangent space of $G_1$ at $Id$):
\begin{equation}
\Gamma_{\alpha\beta}\overset{def}{=} \pi_{\alpha\sigma}(dU^\sigma_\beta )_{|Id},\qquad \phi^\alpha\overset{def}{=}-(dr^\alpha)_{|Id},\qquad \psi_s\overset{def}{=}-(d\lambda_s)_{|Id}.
\end{equation}
 Then, the representation of $\mathfrak g_1$ on $TP_o$ is given by the transformations of the form
 \begin{equation}
\begin{pmatrix}
 0&0&0&0&0&0&0&0&0\\
 0&0&0&0&0&0&0&0&0\\
 0&0&0&0&0&0&0&0&0\\
  -i\phi^\alpha&-\pi^\alpha_{\bar\sigma} \phi^{\bar\sigma}&-i\pi^\alpha_{\bar\sigma} \phi^{\bar\sigma}&-\pi^{\alpha\sigma}\Gamma_{\sigma\beta}&0&0&0&0&0\\
  i\phi^{\bar\alpha}&-\pi^{\bar\alpha}_{\sigma} \phi^{\sigma}&i\pi^{\bar\alpha}_{\sigma} \phi^{\sigma}&0&-\pi^{\bar\alpha\bar\sigma}\Gamma_{\bar\sigma\bar\beta}&0&0&0&0\\
-\psi_1&-\psi_2&-\psi_3&-2\phi_{\beta}&-2\phi_{\bar\beta}&0&0&0&0\\
 0&\psi_3&-\psi_2&2i\phi_{\beta}&-2i\phi_{\bar\beta}&0&0&0&0\\
 -\psi_3&0&\psi_1&-2\pi_{\sigma\beta}\phi^{\sigma}&-2\pi_{\bar\sigma\bar\beta}\phi^{\bar\sigma}&0&0&0&0\\
\psi_2&-\psi_1&0&2i\pi_{\sigma\beta}\phi^{\sigma}&-2i\pi_{\bar\sigma\beta}\phi^{\bar\sigma}&0&0&0&0
\end{pmatrix}
\begin{pmatrix}
\eta_1\\
\eta_2\\
\eta_3\\
\theta^\beta  \\
\theta^{\bar\beta}\\
\varphi_0\\
\varphi_1\\
\varphi_2\\
\varphi_3
\end{pmatrix},
\end{equation}
 where $\Gamma_{\alpha\beta},\phi^\alpha$ are complex, $\psi_s$ are real and the following equations are satisfied:
\begin{equation}
\Gamma_{\alpha\beta}=\Gamma_{\beta\alpha},\qquad (\mathfrak j\Gamma)_{\alpha\beta}=\Gamma_{\alpha\beta},
\end{equation}
i.e., $\{\pi^{\alpha\sigma}\Gamma_{\sigma\beta}\}\in sp(n)$ (cf. Lemma~\ref{sp(n)}).

 By Lemma~\ref{Lex} and Lemma~\ref{P_o-transformations}, it follows that the manifold $P_o$ has an induced $G_1$-structure. Denote by $P_1$ its principle $G_1$-bundle and let $\pi_1:P_1\rightarrow P_o$ be the corresponding principle bundle projection. The local sections of $P_1$ are precisely the local coframings $\{\eta_1,\eta_2,\eta_2,\theta^\alpha,\theta^{\bar\alpha},\varphi_0,\varphi_1,\varphi_2,\varphi_3\}$ for $TP_o$ for which the assertion of Lemma~\ref{Lex} is satisfied. In $P_1$ there are intrinsically (and hence globally) defined one-forms (for which we keep the same notation) $\eta_1,\eta_2,\eta_2,\theta^\alpha,\theta^{\bar\alpha},\varphi_0,\varphi_1,\varphi_2,\varphi_3$ which are everywhere linearly independent and satisfy the structure equations 
 \begin{equation}\label{str-eq-deta}
\begin{cases}
d\eta_1=-\varphi_0\wedge\eta_1-\varphi_2\wedge\eta_3+\varphi_3\wedge\eta_2+2i g_{\alpha\bar\beta}\,\theta^{\alpha}\wedge\theta^{\bar\beta}\\
d\eta_2=-\varphi_0\wedge\eta_2-\varphi_3\wedge\eta_1+\varphi_1\wedge\eta_3+\pi_{\alpha\beta}\,\theta^{\alpha}\wedge\theta^{\beta}+\pi_{\bar\alpha\bar\beta}\,\theta^{\bar\alpha}\wedge\theta^{\bar\beta}\\
d\eta_3=-\varphi_0\wedge\eta_3-\varphi_1\wedge\eta_2 + \varphi_2\wedge\eta_1-i\pi_{\alpha\beta}\,\theta^{\alpha}\wedge\theta^{\beta}+i\pi_{\bar\alpha\bar\beta}\,\theta^{\bar\alpha}\wedge\theta^{\bar\beta}.
\end{cases}
\end{equation}

\begin{thrm}\label{Theorem_1} On $P_1$, there exists a unique set of complex one-forms $\Gamma_{\alpha\beta}, \phi^\alpha$ and real one-forms $\psi_1,\psi_2,\psi_3$  such that 
\begin{equation}\label{Gamma-symmetries}
\Gamma_{\alpha\beta}=\Gamma_{\beta\alpha},\qquad (\mathfrak j\Gamma)_{\alpha\beta}=\Gamma_{\alpha\beta}.
\end{equation}
and the equations
\begin{equation}\label{str-eq-con}
\begin{cases}
d\theta^\alpha=-i\phi^\alpha\wedge\eta_1-\pi^\alpha_{\bar\sigma}\phi^{\bar\sigma}\wedge(\eta_2+i\eta_3)-\pi^{\alpha\sigma}\Gamma_{\sigma\beta}\wedge\theta^\beta-\frac{1}{2}(\varphi_0+i\varphi_1)\wedge\theta^\alpha-\frac{1}{2}\pi^\alpha_{\bar\beta}(\varphi_2+i\varphi_3)\wedge\theta^{\bar\beta}\\
d\varphi_0=-\psi_1\wedge\eta_1-\psi_2\wedge\eta_2-\psi_3\wedge\eta_3-2\phi_\beta\wedge\theta^\beta-2\phi_{\bar\beta}\wedge\theta^{\bar\beta}\\
d\varphi_1=-\varphi_2\wedge\varphi_3-\psi_2\wedge\eta_3+\psi_3\wedge\eta_2+2i\phi_\beta\wedge\theta^\beta-2i\phi_{\bar\beta}\wedge\theta^{\bar\beta}\\
d\varphi_2=-\varphi_3\wedge\varphi_1-\psi_3\wedge\eta_1+\psi_1\wedge\eta_3-2\pi_{\sigma_\beta}\phi^\sigma\wedge\theta^\beta-2\pi_{\bar\sigma\bar\beta}\phi^{\bar\sigma}\wedge\theta^{\bar\beta}\\
d\varphi_3=-\varphi_1\wedge\varphi_2-\psi_1\wedge\eta_2+\psi_2\wedge\eta_1+2i\pi_{\sigma_\beta}\phi^\sigma\wedge\theta^\beta-2i\pi_{\bar\sigma\bar\beta}\phi^{\bar\sigma}\wedge\theta^{\bar\beta},
\end{cases}
\end{equation}
are satisfied.

Furthermore, the set 
\begin{equation}\label{ThrmGlobalCoframing}
\{\eta_1,\eta_2,\eta_2,\theta^\alpha,\theta^{\bar\alpha},\varphi_0,\varphi_1,\varphi_2,\varphi_3\}\cup\{\Gamma_{\alpha\beta} : \alpha\le\beta\}\cup\{\phi^\alpha,\phi^{\bar\alpha},\psi_1,\psi_2,\psi_3\}
\end{equation}
is a global coframing for the (complexified) tangent bundle $TP_1.$ 

In particular, if we suppose that $M'$ is a second qc manifold whose corresponding objects are denoted by dashes, then, in order that there is locally a diffeomorphism which maps  the qc structure of $M$ to this of $M'$, it is necessary and sufficient that there is a diffeomorphism of $P_1$ to $P_1'$ under which the forms in \eqref{ThrmGlobalCoframing} are respectively equal to the forms with dashes.

\end{thrm}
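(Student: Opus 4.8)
The plan is to carry out one more step of Cartan's method of equivalence, exactly as in the passage from $P_o$ to $P_1$. First I would establish \emph{existence} of the forms $\Gamma_{\alpha\beta}, \phi^\alpha, \psi_s$. Start from the structure equations \eqref{str-eq-deta} on $P_1$ and differentiate them. Applying $d$ to $d\eta_1=0$ (i.e.\ $d^2\eta_1 = 0$) and using the already-known formulas \eqref{str-eq-deta} produces an identity of $3$-forms; isolating the terms involving $d\theta^\alpha$, $d\varphi_0$, $d\varphi_1$, $d\varphi_2$, $d\varphi_3$ one sees that these exterior derivatives are determined \emph{modulo} the space of semibasic forms wedged with the $\eta_s,\theta^\beta,\varphi_a$, and the indeterminacy is precisely the image of the $\mathfrak g_1$-representation on $TP_o$ written out just before the theorem. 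In other words, the $d^2=0$ identities for $\eta_1,\eta_2,\eta_3$ force the existence of \emph{some} one-forms $\Gamma_{\alpha\beta},\phi^\alpha,\psi_s$ on $P_1$ making \eqref{str-eq-con} hold up to torsion terms lying in a complement, and then one absorbs those torsion terms by the standard Cartan normalization: the structure group $G_1$ acts on the torsion, and the particular shape of \eqref{str-eq-con} (no free torsion terms at all on the right-hand side beyond the principal parts) is exactly the choice of a normal form for the torsion. The symmetry conditions \eqref{Gamma-symmetries} are the restriction that $\{\pi^{\alpha\sigma}\Gamma_{\sigma\beta}\}$ lies in $sp(n)$ (Lemma~\ref{sp(n)}), which is forced by comparing $\theta^\alpha\wedge\theta^\beta$-type terms after differentiating $d\theta^\alpha$.

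Next, \emph{uniqueness}. Suppose $\Gamma_{\alpha\beta},\phi^\alpha,\psi_s$ and $\Gamma_{\alpha\beta}',\phi'^\alpha,\psi_s'$ both satisfy \eqref{Gamma-symmetries} and \eqref{str-eq-con}. Subtracting the corresponding equations, the principal parts cancel and one is left with a homogeneous linear system: the differences $\Gamma_{\alpha\beta}-\Gamma_{\alpha\beta}'$ etc.\ are combinations of $\eta_s,\theta^\beta,\theta^{\bar\beta},\varphi_a$ (because all the forms in \eqref{ThrmGlobalCoframing} are, once existence is known, a coframing, so any one-form expands in them; but the $d$(coframe) equations constrain the components), and wedging with appropriate top-degree monomials in the remaining coframe forms — exactly the trick used in Lemma~\ref{P_o-transformations} with $\eta_2\wedge\eta_3\wedge\theta^1\wedge\cdots$ — kills all components and yields $\Gamma_{\alpha\beta}=\Gamma_{\alpha\beta}'$, $\phi^\alpha=\phi'^\alpha$, $\psi_s=\psi_s'$. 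Concretely: the coefficient of $\phi^\alpha\wedge\eta_1$ in $d\theta^\alpha$ being pinned down forces the difference $\phi^\alpha-\phi'^\alpha$ to have no $\eta_1$-, $\theta$-, or $\varphi$-components; running through $d\theta^\alpha$, $d\varphi_0$, $d\varphi_1$ one-by-one removes the remaining freedom, and \eqref{Gamma-symmetries} removes the $sp(n)$-ambiguity in $\Gamma$.

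For the coframing claim: once $\Gamma_{\alpha\beta},\phi^\alpha,\psi_s$ exist and \eqref{str-eq-con} holds, the forms in \eqref{ThrmGlobalCoframing} number $4n+3+4n+4+ \dim sp(n) + 4n+3 = \dim P_1$, and they are pointwise independent because $\eta_s,\theta^\alpha,\theta^{\bar\alpha},\varphi_a$ already span a $(8n+7)$-dimensional space transverse to the fibers of $\pi_1$ (Lemma~\ref{P_o-transformations}), while $\Gamma_{\alpha\beta},\phi^\alpha,\psi_s$ restrict on each fiber to the Maurer--Cartan form of $G_1$ — this is visible from the way they were introduced as the linearization of \eqref{trans-P_o} — hence are independent among themselves and independent of the semibasic ones. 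Since they are globally, intrinsically defined (the construction used only the $G_1$-structure), we get a global coframing, i.e.\ an absolute parallelism on $P_1$.

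Finally, the equivalence statement follows by the general principle that two manifolds equipped with a coframing are locally diffeomorphic by a coframe-preserving map iff their structure functions agree, but here it is cleaner to argue functorially: a local qc-diffeomorphism $M\to M'$ lifts canonically to $P_o\to P_o'$ (the bundles are built intrinsically from $H$ and $\hat g,\hat I_s$ up to $CSO(3)$), then to $P_1\to P_1'$ (the $G_1$-structure is intrinsic by Lemmas~\ref{Lex} and~\ref{P_o-transformations}), and this lift pulls back the primed coframe to the unprimed one because both are characterized by the \emph{same} structure equations \eqref{str-eq-deta}, \eqref{str-eq-con} together with the \emph{same} normalization \eqref{Gamma-symmetries}, and we have just proved such a coframe is unique. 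Conversely, a diffeomorphism $P_1\to P_1'$ matching the coframes matches $\eta_1,\eta_2,\eta_3,\theta^\alpha$ and their exterior derivatives, hence descends to $P_o$ and then to $M$ (one recovers $M$ as the leaf space of the distribution annihilated by $\eta_s,\theta^\alpha,\theta^{\bar\alpha},\varphi_a$, or more simply $M = P_1/(G_1\rtimes CSO(3))$ and the map is equivariant on fibers since it matches the Maurer--Cartan-type forms), and it carries $H$ to $H'$ with the matching $\hat g,\hat I_s$. The main obstacle is the first step: verifying that the $d^2 = 0$ identities on $P_1$ really do produce forms of the claimed shape with \emph{no leftover torsion} — i.e.\ that the normalization built into the right-hand sides of \eqref{str-eq-con} is achievable. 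This is the genuine computational heart of Cartan's method and, as the introduction warns, it is substantially more involved here than in the CR case; everything else is bookkeeping with the coframe of the type already illustrated in Lemma~\ref{P_o-transformations}.
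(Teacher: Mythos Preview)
Your outline is correct and follows exactly the route the paper takes: differentiate \eqref{str-eq-deta}, use $d^2\eta_s=0$ to constrain $d\theta^\alpha$ and $d\varphi_a$, then absorb torsion step by step until the forms satisfy \eqref{str-eq-con} with the symmetries \eqref{Gamma-symmetries}; uniqueness by subtraction; coframing by counting and semibasic-versus-vertical independence; equivalence by functoriality of the construction.

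The one point worth flagging is your closing remark that ``everything else is bookkeeping.'' In the paper the existence argument \emph{is} that bookkeeping, carried out in full: the connection form is built through a chain of successive modifications $\tilde\Gamma_{\alpha\beta}\to\hat\Gamma_{\alpha\beta}\to\breve\Gamma_{\alpha\beta}\to\Gamma_{\alpha\beta}$ and $\breve\phi^\alpha\to\hat\phi^\alpha\to\phi^\alpha$, each step killing a specific torsion component (first the $\theta^\alpha\wedge\theta^{\bar\beta}$ part of $d\theta^\alpha$, then the antisymmetric part of $\Gamma$, then the failure of $\mathfrak j$-invariance, then the $\eta_s$-coefficients), and at each step one must check that the modification does not disturb what was already normalized. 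The symmetries \eqref{Gamma-symmetries} are first obtained only modulo $\eta_s$ and then upgraded to exact identities by a further substitution back into the $d^2\eta_s=0$ equations. None of this is conceptually surprising, but your sketch gives no indication of \emph{which} torsion components arise or in what order they can be absorbed, and that sequencing is the actual content of the proof. Likewise, in the uniqueness step the key input you do not mention is that the $\mathfrak j$-invariance of $L_{\alpha\beta}=\Gamma_{\alpha\beta}-\Gamma'_{\alpha\beta}$ converts the vanishing of the $\theta^\beta\wedge\theta^{\bar\gamma}$-coefficients into the vanishing of the $\theta^\beta\wedge\theta^{\gamma}$-coefficients as well; without invoking \eqref{Gamma-symmetries} for the difference, the linear system does not close.
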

 \begin{proof}
 The exterior differentiation of the structure equations $\eqref{str-eq-deta}$ gives:
  \begin{equation}\label{diff-str-eq}
  \begin{split}
  0={}-d\varphi_0&\wedge\eta_1+\Big(d\varphi_3 + \varphi_1\wedge\varphi_2\Big)\wedge\eta_2-\Big(d\varphi_2+\varphi_3\wedge\varphi_1\Big)\wedge\eta_3
  -ig_{\bar\alpha\beta}\Big( 2d\theta^{\bar\alpha}+\phi_0\wedge\theta^{\bar\alpha}\\ 
  &+\pi^{\bar\alpha}_{\sigma}\,(\varphi_2-i\varphi_3)\wedge\theta^\sigma\Big)\wedge\theta^\beta
  +ig_{\alpha\bar\beta}\Big( 2d\theta^{\alpha}+\varphi_0\wedge\theta^{\alpha}+\pi^{\alpha}_{\bar\sigma}\,(\varphi_2+i\varphi_3)\wedge\theta^{\bar\sigma}\Big)\wedge\theta^{\bar\beta};\\
   0=-\Big(d\varphi_3& + \varphi_1\wedge\varphi_2\Big)\wedge\eta_1-d\varphi_0\wedge\eta_2+\Big(d\varphi_1+\varphi_2\wedge\varphi_3\Big)\wedge\eta_3
  +\pi_{\alpha\beta}\Big( 2d\theta^{\alpha}+\varphi_0\wedge\theta^{\alpha}\\
  &+i\varphi_1\wedge\theta^\alpha +i\pi_{\bar\sigma}^{\alpha}\,\varphi_3\wedge\theta^{\bar\sigma}\Big)\wedge\theta^\beta
  +\pi_{\bar\alpha\bar\beta}\Big( 2d\theta^{\bar\alpha}+\varphi_0\wedge\theta^{\bar\alpha}-i\varphi_1\wedge\theta^{\bar\alpha} -i\pi_{\sigma}^{\bar\alpha}\,\varphi_3\wedge\theta^{\sigma}\Big)\wedge\theta^{\bar\beta};\\
   0=\ {}\ \Big(d\varphi_2 &+ \varphi_3\wedge\varphi_1\Big)\wedge\eta_1-\Big(d\varphi_1+\varphi_2\wedge\varphi_3\Big)\wedge\eta_2-d\varphi_0\wedge\eta_3
  -i\pi_{\alpha\beta}\Big( 2d\theta^{\alpha}+\varphi_0\wedge\theta^{\alpha}\\
  &+i\varphi_1\wedge\theta^\alpha +\pi_{\bar\sigma}^{\alpha}\,\varphi_2\wedge\theta^{\bar\sigma}\Big)\wedge\theta^\beta
  +i\pi_{\bar\alpha\bar\beta}\Big( 2d\theta^{\bar\alpha}+\varphi_0\wedge\theta^{\bar\alpha}-i\varphi_1\wedge\theta^{\bar\alpha} +\pi_{\sigma}^{\bar\alpha}\,\varphi_2\wedge\theta^{\sigma}\Big)\wedge\theta^{\bar\beta}.
  \end{split}
  \end{equation}
 From this, it follows that 
 $
 d\theta^\alpha \equiv 0 
$
modulo $\{\eta_s,\theta^\beta,\theta^{\bar\beta}\}$.
 Letting 
 \begin{equation}\label{d-theta-alpha}
 d\theta^\alpha\equiv X^\alpha_\beta\wedge\theta^\beta + X^\alpha_{\bar\beta}\wedge\theta^{\bar\beta} \mod\ {\eta_s},
 \end{equation}
 for some one forms $X^\alpha_\beta, X^\alpha_{\bar\beta}$,
and substituting back into \eqref{diff-str-eq}, we compute modulo ${\eta_s}$ (i.e., by ignoring the therms involving $\eta_s$):
 \begin{equation}\label{str-modeta-1}
 \begin{split}
0\ &\equiv\  -  \Big(X_{\alpha\beta}-X_{\beta\alpha} + \pi_{\alpha\beta}(\varphi_2-i\varphi_3)\Big)\wedge\theta^{\alpha}\wedge\theta^{\beta}\\
&+ \Big(X_{\bar\alpha\bar\beta}-X_{\bar\beta\bar\alpha} + \pi_{\bar\alpha\bar\beta}(\varphi_2+i\varphi_3)\Big)\wedge\theta^{\bar\alpha}\wedge\theta^{\bar\beta} 
 + 2\Big(X_{\alpha\bar\beta}+X_{\bar\beta\alpha} + g_{\alpha\bar\beta}\varphi_0\Big)\wedge\theta^{\alpha}\wedge\theta^{\bar\beta};\\
0\ &\equiv\ \Big(\pi^{\bar\sigma}_{\alpha}X_{\beta\bar\sigma}-\pi^{\bar\sigma}_{\beta}X_{\alpha\bar\sigma} + \pi_{\alpha\beta}(\varphi_0+i\varphi_1)\Big)\wedge\theta^{\alpha}\wedge\theta^{\beta}\\
&+ \Big(\pi^{\sigma}_{\bar\alpha}X_{\bar\beta\sigma}-\pi^{\sigma}_{\bar\beta}X_{\bar\alpha\sigma} + \pi_{\bar\alpha\bar\beta}(\varphi_0-i\varphi_1)\Big)\wedge\theta^{\bar\alpha}\wedge\theta^{\bar\beta} 
 + 2\Big(\pi^{\bar\sigma}_{\alpha}X_{\bar\beta\bar\sigma}-\pi^{\sigma}_{\bar\beta}X_{\alpha\sigma}+ ig_{\alpha\bar\beta}\varphi_3\Big)\wedge\theta^{\alpha}\wedge\theta^{\bar\beta};\\
0\ &\equiv\  -i\Big(\pi^{\bar\sigma}_{\alpha}X_{\beta\bar\sigma}-\pi^{\bar\sigma}_{\beta}X_{\alpha\bar\sigma} + \pi_{\alpha\beta}(\varphi_0+i\varphi_1)\Big)\wedge\theta^{\alpha}\wedge\theta^{\beta}\\
&+ i\Big(\pi^{\sigma}_{\bar\alpha}X_{\bar\beta\sigma}-\pi^{\sigma}_{\bar\beta}X_{\bar\alpha\sigma} + \pi_{\bar\alpha\bar\beta}(\varphi_0-i\varphi_1)\Big)\wedge\theta^{\bar\alpha}\wedge\theta^{\bar\beta} 
 - 2i\Big(\pi^{\bar\sigma}_{\alpha}X_{\bar\beta\bar\sigma}+\pi^{\sigma}_{\bar\beta}X_{\alpha\sigma}+ g_{\alpha\bar\beta}\varphi_2\Big)\wedge\theta^{\alpha}\wedge\theta^{\bar\beta}.
 \end{split}
 \end{equation}
 Adding the third equation of \eqref{str-modeta-1}, multiplied by $i$, to the second gives
\begin{multline}\label{2+i3}
2\Big(\pi^{\bar\sigma}_{\alpha}X_{\beta\bar\sigma}-\pi^{\bar\sigma}_{\beta}X_{\alpha\bar\sigma} + \pi_{\alpha\beta}(\varphi_0+i\varphi_1)\Big)\wedge\theta^{\alpha}\wedge\theta^{\beta} \\
 + 2\pi^{\bar\sigma}_\alpha\Big(2X_{\bar\beta\bar\sigma}+ \pi_{\bar\beta\bar\sigma}(\varphi_2+i\varphi_3)\Big)\wedge\theta^{\alpha}\wedge\theta^{\bar\beta}\ \equiv\ 0 \qquad \mod{\eta_s}, 
\end{multline}
from which we deduce that both the expressions in the large parentheses vanish modulo ${\{\theta^\alpha,\theta^{\bar\alpha},\eta_s\}}$. Let
\begin{equation}\label{Xbarbar}
2X_{\bar\beta\bar\sigma}+ \pi_{\bar\beta\bar\sigma}(\varphi_2+i\varphi_3)\ \equiv\ Y_{\bar\beta\bar\sigma\gamma}\,\theta^\gamma+Y_{\bar\beta\bar\sigma\bar\gamma}\,\theta^{\bar\gamma}\mod{\eta_s}
\end{equation}
for some functions $Y_{\bar\beta\bar\sigma\gamma}$, $Y_{\bar\beta\bar\sigma\bar\gamma}.$  Then, by substituting back into \eqref{2+i3} and considering only the coefficient of $\theta^\alpha\wedge\theta^{\bar\beta}\wedge\theta^{\bar\gamma}$, we obtain the symmetry $Y_{\bar\beta\bar\sigma\bar\gamma}=Y_{\bar\gamma\bar\sigma\bar\beta}$. Therefore, we have that
\begin{multline}
d\theta^\alpha \equiv X^\alpha_\beta\wedge\theta^\beta+g^{\alpha\bar\sigma}X_{\bar\beta\bar\sigma}\wedge\theta^{\bar\beta} \equiv 
X^\alpha_\beta\wedge\theta^\beta+g^{\alpha\bar\sigma}\Big(X_{\bar\beta\bar\sigma}-\frac{1}{2}Y_{\bar\beta\bar\sigma\bar\gamma}\,\theta^{\bar\gamma}\Big)\wedge\theta^{\bar\beta}\\
\overset{\eqref{Xbarbar}}{\equiv} X^\alpha_\beta\wedge\theta^\beta+g^{\alpha\bar\sigma}\Big(-\frac{1}{2}\pi_{\bar\beta\bar\sigma}(\varphi_2+i\varphi_3)+\frac{1}{2}Y_{\bar\beta\bar\sigma\gamma}\,\theta^{\gamma}\Big)\wedge\theta^{\bar\beta}\\
 \equiv X^\alpha_\beta\wedge\theta^\beta+ \frac{1}{2}Y^{\spc\alpha\spc}_{\bar\beta\dt\gamma}\,\theta^\gamma\wedge\theta^{\bar\beta}-\frac{1}{2}\pi_{\bar\beta}^\alpha(\varphi_2+i\varphi_3)\wedge\theta^{\bar\beta}\\
 \equiv \Big(X^\alpha_\beta-\frac{1}{2}Y^{\spc\alpha\spc}_{\bar\gamma\dt\beta}\,\theta^{\bar\gamma}\Big)\wedge\theta^\beta-\frac{1}{2}\pi_{\bar\beta}^\alpha(\varphi_2+i\varphi_3)\wedge\theta^{\bar\beta}\qquad \mod{\eta_s}.
\end{multline}
This means that among all the one-forms $X^\alpha_\beta,X^\alpha_{\bar\beta}$, for which \eqref{d-theta-alpha} is satisfied, we can find such that
\begin{equation}\label{assume-X-alpha-beta}
 X^\alpha_{\bar\beta} = -\frac{1}{2}\pi_{\bar\beta}^\alpha(\varphi_2+i\varphi_3).
 \end{equation} 
 
 Assuming \eqref{assume-X-alpha-beta}, we have
\begin{equation}\label{d-theta-alpha-2}
 d\theta^\alpha\equiv X^\alpha_\beta\wedge\theta^\beta -\frac{1}{2}\pi_{\bar\beta}^\alpha(\varphi_2+i\varphi_3)\wedge\theta^{\bar\beta} \mod\ {\eta_s}
 \end{equation}
and the equations \eqref{str-modeta-1} (modulo $\eta_s$) become:
 \begin{equation}\label{str-modeta-2}
 \begin{split}
0\ \equiv\   \Big(&X_{\alpha\bar\beta}+X_{\bar\beta\alpha} + g_{\alpha\bar\beta}\varphi_0\Big)\wedge\theta^{\alpha}\wedge\theta^{\bar\beta};\\
0\ \equiv\ \Big(&\pi^{\bar\sigma}_{\alpha}X_{\beta\bar\sigma}-\pi^{\bar\sigma}_{\beta}X_{\alpha\bar\sigma} + \pi_{\alpha\beta}(\varphi_0+i\varphi_1)\Big)\wedge\theta^{\alpha}\wedge\theta^{\beta}\\
&+ \Big(\pi^{\sigma}_{\bar\alpha}X_{\bar\beta\sigma}-\pi^{\sigma}_{\bar\beta}X_{\bar\alpha\sigma} + \pi_{\bar\alpha\bar\beta}(\varphi_0-i\varphi_1)\Big)\wedge\theta^{\bar\alpha}\wedge\theta^{\bar\beta};\\
0\ \equiv\  \Big(&\pi^{\bar\sigma}_{\alpha}X_{\beta\bar\sigma}-\pi^{\bar\sigma}_{\beta}X_{\alpha\bar\sigma} + \pi_{\alpha\beta}(\varphi_0+i\varphi_1)\Big)\wedge\theta^{\alpha}\wedge\theta^{\beta}\\
&-\Big(\pi^{\sigma}_{\bar\alpha}X_{\bar\beta\sigma}-\pi^{\sigma}_{\bar\beta}X_{\bar\alpha\sigma} + \pi_{\bar\alpha\bar\beta}(\varphi_0-i\varphi_1)\Big)\wedge\theta^{\bar\alpha}\wedge\theta^{\bar\beta}.
 \end{split}
 \end{equation}

Let us define
\begin{equation}
\tilde\Gamma_{\alpha\beta}\overset{def}{=}\pi^{\bar\sigma}_\alpha\Big(X_{\beta\bar\sigma}+\frac{1}{2}g_{\beta\bar\sigma}(\varphi_0+i\varphi_1)\Big).
\end{equation}
Then, \eqref{d-theta-alpha-2} and \eqref{str-modeta-2} yield
\begin{equation}\label{tilde-Gamma}
\begin{cases}
d\theta^\alpha\equiv -\pi^{\alpha\sigma}\tilde\Gamma_{\sigma\beta}\wedge\theta^\beta-\frac{1}{2}(\varphi_0+i\varphi_1)\wedge\theta^\alpha -\frac{1}{2}\pi_{\bar\beta}^\alpha(\varphi_2+i\varphi_3)\wedge\theta^{\bar\beta}\\
(\pi^{\bar\sigma}_{\alpha}\tilde\Gamma_{\bar\sigma\bar\beta}+\pi^{\sigma}_{\bar\beta}\tilde\Gamma_{\sigma\alpha})\wedge\theta^\alpha\wedge\theta^{\bar\beta}\equiv 0\\
(\tilde\Gamma_{\alpha\beta}-\tilde\Gamma_{\beta\alpha})\wedge\theta^\alpha\wedge\theta^\beta \equiv 0
\end{cases}\qquad\mod\ {\eta_s}
\end{equation}   
The third equation of \eqref{tilde-Gamma} implies the the existence of (unique) functions $C_{\alpha\beta\gamma}$ satisfying
\begin{equation}\label{prop-C}
\begin{cases}
\tilde\Gamma_{\alpha\beta}-\tilde\Gamma_{\beta\alpha}\equiv C_{\alpha\beta\gamma}\theta^\gamma\mod {\eta_s}\\
C_{\alpha\beta\gamma}=-C_{\beta\alpha\gamma}\\
C_{\alpha\beta\gamma}+C_{\beta\gamma\alpha}+C_{\gamma\alpha\beta}=0.
\end{cases}
\end{equation}

Let $D_{\alpha\beta\gamma}\overset{def}{=}\frac{1}{3}(C_{\alpha\beta\gamma}+C_{\alpha\gamma\beta})$ and 
\begin{equation*}
\hat\Gamma_{\alpha\beta}\overset{def}{=}\tilde\Gamma_{\alpha\beta}-D_{\alpha\beta\gamma}\,\theta^\gamma.
\end{equation*}
Then, modulo $\eta_s$,  
\begin{multline*}
\hat\Gamma_{\alpha\beta}-\hat\Gamma_{\beta\alpha}\equiv \tilde\Gamma_{\alpha\beta}-\tilde\Gamma_{\beta\alpha}-(D_{\alpha\beta\gamma}-D_{\beta\alpha\gamma})\theta^\gamma
\equiv C_{\alpha\beta\gamma}\theta^\gamma
-(D_{\alpha\beta\gamma}-D_{\beta\alpha\gamma})\theta^\gamma
\\
\equiv C_{\alpha\beta\gamma}\theta^\gamma
-\frac{1}{3}(C_{\alpha\beta\gamma}+C_{\alpha\gamma\beta}-C_{\beta\alpha\gamma}-C_{\beta\gamma\alpha})\theta^\gamma
\\
\equiv C_{\alpha\beta\gamma}\theta^\gamma
-\frac{1}{3}(3C_{\alpha\beta\gamma}\underbrace{-C_{\alpha\beta\gamma}-C_{\gamma\alpha\beta}-C_{\beta\gamma\alpha}}_{=\,0\ \text{by \eqref{prop-C}}})\theta^\gamma
 \equiv 0.
\end{multline*}
 And since $D_{\alpha\beta\gamma}=D_{\alpha\gamma\beta}$, the equations \eqref{tilde-Gamma} become
 \begin{equation}\label{hat-Gamma}
\begin{cases}
d\theta^\alpha\equiv -\pi^{\alpha\sigma}\hat\Gamma_{\sigma\beta}\wedge\theta^\beta-\frac{1}{2}(\varphi_0+i\varphi_1)\wedge\theta^\alpha -\frac{1}{2}\pi_{\bar\beta}^\alpha(\varphi_2+i\varphi_3)\wedge\theta^{\bar\beta}\\
(\pi^{\bar\sigma}_{\alpha}\hat\Gamma_{\bar\sigma\bar\beta}+\pi^{\sigma}_{\bar\beta}\hat\Gamma_{\sigma\alpha})\wedge\theta^\alpha\wedge\theta^{\bar\beta}\equiv 0\\
\hat\Gamma_{\alpha\beta}-\hat\Gamma_{\beta\alpha} \equiv 0
\end{cases}\qquad\mod\ {\eta_s}
\end{equation}   
 
 By the second identity of \eqref{hat-Gamma}, $\pi^{\bar\sigma}_{\alpha}\hat\Gamma_{\bar\sigma\bar\beta}+\pi^{\sigma}_{\bar\beta}\hat\Gamma_{\sigma\alpha} \equiv 0$ mod $\{\theta^\alpha,\theta^{\bar\alpha},\eta_s\}$. Hence there exists functions $A_{\alpha\beta\gamma}, B_{\alpha\beta\gamma}$ so that
 \begin{equation}\label{ex-A-B}
 \pi^{\bar\sigma}_{\alpha}\hat\Gamma_{\bar\sigma\bar\beta}+\pi^{\sigma}_{\bar\beta}\hat\Gamma_{\sigma\alpha}\equiv \pi^{\sigma}_{\bar\beta}\,A_{\alpha\sigma\gamma}\,\theta^{\gamma}+\pi^{\bar\sigma}_\alpha\, B_{\bar\sigma\bar\beta\bar\gamma}\theta^{\bar\gamma} \mod {\eta_s}
 \end{equation}
 and $A_{\alpha\beta\gamma}=A_{\gamma\beta\alpha},$ $B_{\alpha\beta\gamma}=B_{\alpha\gamma\beta}$.
 We multiply \eqref{ex-A-B} by $\pi^{\bar\beta}_{\tau}$ and sum over $\bar\beta$ to obtain
 \begin{equation}\label{ex-A-B-2}
 (\mathfrak j\hat\Gamma)_{\alpha\tau}-\hat\Gamma_{\tau\alpha}\equiv-A_{\alpha\tau\gamma}\,\theta^{\gamma}+\pi^{\bar\sigma}_\alpha\,\pi^{\bar\beta}_\tau\, B_{\bar\sigma\bar\beta\bar\gamma}\theta^{\bar\gamma} \mod {\eta_s}.
 \end{equation}
 Since, by the third identity of \eqref{hat-Gamma}, the LHS of the \eqref{ex-A-B-2} is symmetric (modulo $\eta_s$) in the indices $\alpha,\tau$, so is the RHS. This implies that both $A_{\alpha\beta\gamma}$ and $B_{\alpha\beta\gamma}$ are totally symmetric in $\alpha,\beta,\gamma$.   
 
 Applying the map $\mathfrak j$ to both sides of \eqref{ex-A-B-2} gives
\begin{equation*}
 \hat\Gamma_{\alpha\tau}-(\mathfrak j\hat\Gamma)_{\alpha\tau}\equiv \pi^{\bar\mu}_\alpha\,\pi^{\bar\nu}_\tau\, \Big(-A_{\bar\mu\bar\nu\bar\gamma}\,\theta^{\bar\gamma}+\pi^{\sigma}_{\bar\mu}\,\pi^{\beta}_{\bar\nu}\, B_{\sigma\beta\gamma}\theta^{\gamma}\Big)\equiv B_{\alpha\tau\gamma}\theta^{\gamma} - \pi^{\bar\mu}_\alpha\,\pi^{\bar\nu}_\tau\, A_{\bar\mu\bar\nu\bar\gamma}\mod{\eta_s},
 \end{equation*}
 which, by comparison with the initial equation \eqref{ex-A-B-2}, yields $A_{\alpha\beta\gamma}=B_{\alpha\beta\gamma}$.
 
If we set
 \begin{equation*}
 \breve\Gamma_{\alpha\beta}\overset{def}{=}\hat\Gamma_{\alpha\beta}-A_{\alpha\beta\gamma}\,\theta^\gamma,
 \end{equation*}
 then, by \eqref{hat-Gamma}, \eqref{ex-A-B-2}, and the just established properties of $A_{\alpha\beta\gamma}=B_{\alpha\beta\gamma}$, we obtain that 
 \begin{equation}\label{breve-Gamma}
\begin{cases}
(\mathfrak j\breve\Gamma)_{\alpha\beta}\equiv \breve \Gamma_{\alpha\beta}\\
\breve\Gamma_{\alpha\beta}\equiv \breve \Gamma_{\beta\alpha}
\end{cases}\qquad\mod\ {\eta_s}
\end{equation}
and
\begin{equation}\label{dtheta-breve}
d\theta^\alpha =  -\pi^{\alpha\sigma}\breve\Gamma_{\sigma\beta}\wedge\theta^\beta-\frac{1}{2}(\varphi_0+i\varphi_1)\wedge\theta^\alpha -\frac{1}{2}\pi_{\bar\beta}^\alpha(\varphi_2+i\varphi_3)\wedge\theta^{\bar\beta} + (X_s)^\alpha\wedge\eta_s,
\end{equation}
for some one-forms $(X_s)^\alpha$.
 
 Substituting \eqref{dtheta-breve} back into \eqref{diff-str-eq} yields:
  \begin{equation}\label{diff-str-eq-3I}
  \begin{split}
  0={}-\Big(&d\varphi_0-2ig_{\bar\alpha\beta}(X_1)^{\bar\alpha}\wedge\theta^{\beta}+2ig_{\alpha\bar\beta}(X_1)^\alpha\wedge\theta^{\bar\beta}\Big)\wedge\eta_1
  +\Big(d\varphi_3 + \varphi_1\wedge\varphi_2+2ig_{\bar\alpha\beta}(X_2)^{\bar\alpha}\wedge\theta^{\beta}
  \\&-2ig_{\alpha\bar\beta}(X_2)^{\alpha}\wedge\theta^{\bar\beta}\Big)\wedge\eta_2
  -\Big(d\varphi_2+\varphi_3\wedge\varphi_1-2ig_{\bar\alpha\beta}(X_3)^{\bar\alpha}\wedge\theta^{\beta}+2ig_{\alpha\bar\beta}(X_3)^\alpha\wedge\theta^{\bar\beta}\Big)\wedge\eta_3
  \\&-2i\pi^{\sigma}_{\bar\beta} \Big( \breve\Gamma_{\sigma\alpha}-(\mathfrak j \breve\Gamma)_{\alpha\sigma}\Big)\wedge\theta^\alpha\wedge\theta^{\bar\beta};\\
 \end{split}
 \end{equation}
  \begin{equation}\label{diff-str-eq-3II}
  \begin{split}
  0={}-\Big(&d\varphi_3 + \varphi_1\wedge\varphi_2+2\pi_{\alpha\beta}(X_1)^{\alpha}\wedge\theta^{\beta}+2\pi_{\bar\alpha\bar\beta}(X_1)^{\bar\alpha}\wedge\theta^{\bar\beta}\Big)\wedge\eta_1
  -\Big(d\varphi_0+2\pi_{\alpha\beta}(X_2)^{\alpha}\wedge\theta^{\beta}
  \\&+2\pi_{\bar\alpha\bar\beta}(X_2)^{\bar\alpha}\wedge\theta^{\bar\beta}\Big)\wedge\eta_2
  +\Big(d\varphi_1+\varphi_2\wedge\varphi_3-2\pi_{\alpha\beta}(X_3)^{\alpha}\wedge\theta^{\beta}-2\pi_{\bar\alpha\bar\beta}(X_3)^{\bar\alpha}\wedge\theta^{\bar\beta}\Big)\wedge\eta_3
  \\&+2\breve\Gamma_{\alpha\beta}\wedge\theta^\alpha\wedge\theta^\beta+2 \breve\Gamma_{\bar\alpha\bar\beta}\wedge\theta^{\bar\alpha}\wedge\theta^{\bar\beta};\\
 \end{split}
 \end{equation}
  \begin{equation}\label{diff-str-eq-3III}
  \begin{split}
   0\ ={}\ \Big(&d\varphi_2 + \varphi_3\wedge\varphi_1+2i\pi_{\alpha\beta}(X_1)^{\alpha}\wedge\theta^{\beta}-2i\pi_{\bar\alpha\bar\beta}(X_1)^{\bar\alpha}\wedge\theta^{\bar\beta}\Big)\wedge\eta_1
  -\Big(d\varphi_1+\varphi_2\wedge\varphi_3
  \\&-2i\pi_{\alpha\beta}(X_2)^{\alpha}\wedge\theta^{\beta}
  +2i\pi_{\bar\alpha\bar\beta}(X_2)^{\bar\alpha}\wedge\theta^{\bar\beta}\Big)\wedge\eta_2
  -\Big(d\varphi_0-2i\pi_{\alpha\beta}(X_3)^{\alpha}\wedge\theta^{\beta}+2i\pi_{\bar\alpha\bar\beta}(X_3)^{\bar\alpha}\wedge\theta^{\bar\beta}\Big)\wedge\eta_3
  \\&-2i\breve\Gamma_{\alpha\beta}\wedge\theta^\alpha\wedge\theta^\beta+2i \breve\Gamma_{\bar\alpha\bar\beta}\wedge\theta^{\bar\alpha}\wedge\theta^{\bar\beta}.
  \end{split}
  \end{equation}
 
Notice that, by \eqref{breve-Gamma}, the last summands in the above three equations vanish modulo $\eta_s$. Hence if we wedge \eqref{diff-str-eq-3I} with $\eta_2\wedge\eta_3$, \eqref{diff-str-eq-3II} with $\eta_3\wedge\eta_1$, and take the difference, we obtain 
 \begin{equation}
 0= \Big(-2ig_{\bar\alpha\beta}((X_1)^{\bar\alpha}-i\pi^{\bar\alpha}_{\tau}(X_2)^{\tau})\wedge\theta^{\beta}+2ig_{\alpha\bar\beta}((X_1)^\alpha+i\pi^{\alpha}_{\bar\tau}(X_2)^{\bar\tau})\wedge\theta^{\bar\beta}\Big)\wedge\eta_1\wedge\eta_2\wedge\eta_3+\dots, 
 \end{equation}	
 the unwritten terms being of the form $\Big(\bigwedge^2\{\theta^\alpha,\theta^{\bar\alpha}\}\Big)\wedge\eta_1\wedge\eta_2\wedge\eta_3$,
which yields  that
\begin{equation*}
(X_1)^{\alpha}+i\pi^{\alpha}_{\bar\tau}(X_2)^{\bar\tau}\equiv 0\mod {\{\theta^\alpha,\theta^{\bar\alpha},\eta_s\}}.
\end{equation*}

 Similarly, by wedging \eqref{diff-str-eq-3I} with $\eta_2\wedge\eta_3$, \eqref{diff-str-eq-3III} with $\eta_1\wedge\eta_2$, and considering the difference of the resulting two equations, we obtain 
 that
\begin{equation*}
(X_1)^{\alpha}-\pi^{\alpha}_{\bar\tau}(X_3)^{\bar\tau}\equiv 0\mod {\{\theta^\alpha,\theta^{\bar\alpha},\eta_s\}}.
\end{equation*}

Therefore, if we set
\begin{equation*}
\breve\phi^\alpha \ \overset{def}{=} \ i(X_1)^\alpha, 
\end{equation*}
we have
\begin{equation}\label{breve-phi}
\begin{cases}
(X_1)^\alpha = -i\breve\phi^\alpha\\
(X_2)^\alpha = -\pi^{\alpha}_{\bar\sigma}\breve\phi^{\bar\sigma}+Y^{\alpha}_{\beta}\,\theta^{\beta}+Y^{\alpha}_{\bar\beta}\,\theta^{\bar\beta}+\dots\\
(X_3)^\alpha = -i\pi^{\alpha}_{\bar\sigma}\breve\phi^{\bar\sigma}+Z^{\alpha}_{\beta}\,\theta^{\beta}+Z^{\alpha}_{\bar\beta}\,\theta^{\bar\beta}+\dots,
\end{cases}
\end{equation}
where the omitted terms are linear expressions in $\eta_s$ and $Y^{\alpha}_{\beta},Y^{\alpha}_{\bar\beta},Z^{\alpha}_{\beta}, Z^{\alpha}_{\bar\beta}$ are some appropriate coefficients.

Let us wedge \eqref{diff-str-eq-3III} with $\eta_1\wedge\eta_2$, \eqref{diff-str-eq-3II} with $\eta_3\wedge\eta_1$ and take the difference. We obtain
\begin{multline*}
 0= \Big(2\pi_{\alpha\beta}((X_2)^{\alpha}+i(X_3)^{\alpha})\wedge\theta^{\beta}+2\pi_{\bar\alpha\bar\beta}((X_2)^{\bar\alpha}-i(X_3)^{\bar\alpha})\wedge\theta^{\bar\beta}\Big)\wedge\eta_1\wedge\eta_2\wedge\eta_3\\
 +\ast\,\theta^\alpha\wedge\theta^\beta\wedge\eta_1\wedge\eta_2\wedge\eta_3 
 + \ast\,\theta^{\bar\alpha}\wedge\theta^{\bar\beta}\wedge\eta_1\wedge\eta_2\wedge\eta_3\\
 \overset{\eqref{breve-phi}}{=} 
\Big(-2\pi_{\sigma\alpha}(Y^{\sigma}_{\bar\beta}+iZ^{\sigma}_{\bar\beta})+2\pi_{\bar\sigma\bar\beta}(Y^{\bar\sigma}_{\alpha}-iZ^{\bar\sigma}_\alpha)\Big)\,\theta^\alpha\wedge\theta^{\bar\beta}\wedge\eta_1\wedge\eta_2\wedge\eta_3\\
+\ast\,\theta^\alpha\wedge\theta^\beta\wedge\eta_1\wedge\eta_2\wedge\eta_3 
 + \ast\,\theta^{\bar\alpha}\wedge\theta^{\bar\beta}\wedge\eta_1\wedge\eta_2\wedge\eta_3,
 \end{multline*}
where the asterisk represents coefficients which are irrelevant at the moment.   It follows that
\begin{equation*}
\pi_{\sigma\alpha}(Y^{\sigma}_{\bar\beta}+iZ^{\sigma}_{\bar\beta})=\pi_{\bar\sigma\bar\beta}(Y^{\bar\sigma}_{\alpha}-iZ^{\bar\sigma}_\alpha).
\end{equation*}

Similarly, by considering the sum of \eqref{diff-str-eq-3II}, wedged with  $\eta_1\wedge\eta_2$, and \eqref{diff-str-eq-3III}, wedged with  $\eta_3\wedge\eta_1$, we obtain that
\begin{equation*}
\pi_{\sigma\alpha}(Y^{\sigma}_{\bar\beta}+iZ^{\sigma}_{\bar\beta})=-\pi_{\bar\sigma\bar\beta}(Y^{\bar\sigma}_{\alpha}-iZ^{\bar\sigma}_\alpha).
\end{equation*}
Hence 
$
Z^\alpha_{\bar\beta}=iY^\alpha_{\bar\beta}.
$
Setting
\begin{equation*}
\hat\phi^{\alpha}\ \overset{def}{=}\ \breve\phi^{\alpha}+\pi^\alpha_{\bar\sigma}Y^{\bar\sigma}_{\beta}\,\theta^{\beta},
\end{equation*}
the equations \eqref{breve-phi} become
\begin{equation}\label{hat-phi}
\begin{cases}
(X_1)^\alpha = -i\hat\phi^\alpha+i\pi^\alpha_{\bar\sigma}Y^{\bar\sigma}_{\beta}\,\theta^{\beta}\\
(X_2)^\alpha = -\pi^{\alpha}_{\bar\sigma}\hat\phi^{\bar\sigma}+Y^{\alpha}_{\beta}\,\theta^{\beta}+\dots\\
(X_3)^\alpha = -i\pi^{\alpha}_{\bar\sigma}\hat\phi^{\bar\sigma}+Z^{\alpha}_{\beta}\,\theta^{\beta}+\dots
\end{cases}
\end{equation}
(the dots represent terms which are linear in $\eta_s$).

We define
\begin{equation}\label{def-Gamma}
\Gamma_{\alpha\beta}\ \overset{def}{=}\ \breve\Gamma_{\alpha\beta}-ig_{\alpha\bar\sigma}\, Y^{\bar\sigma}_\beta\,\eta_1-\pi_{\alpha\sigma}\, Y^{\sigma}_\beta\,\eta_2-i\pi_{\alpha\sigma}\, Z^{\sigma}_\beta\,\eta_3.
\end{equation}

Substituting \eqref{def-Gamma} and \eqref{hat-phi} into \eqref{dtheta-breve}, we obtain
\begin{multline}\label{dtheta-C}
d\theta^\alpha =  -\pi^{\alpha\sigma}\Gamma_{\sigma\beta}\wedge\theta^\beta-\frac{1}{2}(\varphi_0+i\varphi_1)\wedge\theta^\alpha -\frac{1}{2}\pi_{\bar\beta}^\alpha(\varphi_2+i\varphi_3)\wedge\theta^{\bar\beta} 
-i\hat\phi^\alpha\wedge\eta_1
\\-\pi^\alpha_{\bar\sigma} \hat\phi^{\bar\sigma}\wedge(\eta_2+i\eta_3)+(C_{st})^\alpha\eta_s\wedge\eta_t,
\end{multline}
where $(C_{st})^\alpha=-(C_{st})^{\alpha}$ are some appropriate coefficients.

Finally, if we set
\begin{equation*}
\begin{split}
\phi^\alpha\ \overset{def}{=}\ \hat\phi^{\alpha}&+\Big(\pi^\alpha_{\bar\sigma}\,(C_{12})^{\bar\sigma}-(C_{23})^{\alpha}-i\pi^\alpha_{\bar\sigma}\,(C_{31})^{\bar\sigma}\Big)\,\eta_1
+i\Big(-(C_{12})^{\alpha}+\pi^\alpha_{\bar\sigma}\,(C_{23})^{\bar\sigma}+i(C_{31})^{\alpha}\Big)\,\eta_2\\
&+\Big(-(C_{12})^{\alpha}-\pi^\alpha_{\bar\sigma}\,(C_{23})^{\bar\sigma}+i(C_{31})^{\alpha}\Big)\,\eta_3,
\end{split}
\end{equation*}
then, by \eqref{dtheta-C}, we obtain that
\begin{equation}\label{d-theta}
d\theta^\alpha =  -\pi^{\alpha\sigma}\Gamma_{\sigma\beta}\wedge\theta^\beta-\frac{1}{2}(\varphi_0+i\varphi_1)\wedge\theta^\alpha -\frac{1}{2}\pi_{\bar\beta}^\alpha(\varphi_2+i\varphi_3)\wedge\theta^{\bar\beta} 
-i\phi^\alpha\wedge\eta_1
-\pi^\alpha_{\bar\sigma} \phi^{\bar\sigma}\wedge(\eta_2+i\eta_3).
\end{equation}

So far, we have shown, by \eqref{breve-Gamma} and \eqref{def-Gamma}, that the equations $\Gamma_{\alpha\beta}\equiv \Gamma_{\beta\alpha}$ and $(\mathfrak  j\Gamma)_{\alpha\beta}\equiv \Gamma_{\alpha\beta}$ are satisfied only modulo $\eta_s$. Let us assume
\begin{equation}\label{ver-Gamma}
\begin{cases}
\pi^{\sigma}_{\bar\beta}\,\Big((\mathfrak  j\Gamma)_{\alpha\sigma} - \Gamma_{\sigma\alpha}\Big)= (A_s)_{\alpha\bar\beta}\,\eta_s\\
\Gamma_{\alpha\beta}-\Gamma_{\beta\alpha} = (B_s)_{\alpha\beta}\,\eta_s
\end{cases}
\end{equation}
for some functions $(A_s)_{\alpha\bar\beta}$ and $(B_s)_{\alpha\beta}=-(B_s)_{\beta\alpha}$.

Substituting \eqref{d-theta} and \eqref{ver-Gamma}  back into \eqref{diff-str-eq} gives:

 \begin{equation}\label{diff-str-eq-4I}
  \begin{split}
  0={}-\Big(&d\varphi_0+2\phi_{\beta}\wedge\theta^{\beta}+2\phi_{\bar\beta}\wedge\theta^{\bar\beta}-2i (A_1)_{\alpha\bar\beta}\theta^\alpha\wedge\theta^{\bar\beta}\Big)\wedge\eta_1
  \\&+\Big(d\varphi_3 + \varphi_1\wedge\varphi_2-2i\pi_{\alpha\beta}\phi^{\alpha}\wedge\theta^{\beta}
  +2i\pi_{\bar\alpha\bar\beta}\phi^{\bar\alpha}\wedge\theta^{\bar\beta} +2i(A_2)_{\alpha\bar\beta}\theta^\alpha\wedge\theta^{\bar\beta}\Big)\wedge\eta_2
 \\& -\Big(d\varphi_2+\varphi_3\wedge\varphi_1+2\pi_{\alpha\beta}\phi^{\alpha}\wedge\theta^{\beta}+2\pi_{\bar\alpha\bar\beta}\phi^{\bar\alpha}\wedge\theta^{\bar\beta}
  -2i(A_3)_{\alpha\bar\beta}\theta^\alpha\wedge\theta^{\bar\beta}\Big)\wedge\eta_3;
   \end{split}
 \end{equation}
  \begin{equation}\label{diff-str-eq-4II}
  \begin{split}
  0={}-\Big(&d\varphi_3 + \varphi_1\wedge\varphi_2-2i\pi_{\alpha\beta}\phi^{\alpha}\wedge\theta^{\beta}+2i\pi_{\bar\alpha\bar\beta}\phi^{\bar\alpha}\wedge\theta^{\bar\beta} - (B_1)_{\alpha\beta}\theta^\alpha\wedge\theta^\beta-(B_1)_{\bar\alpha\bar\beta}\theta^{\bar\alpha}\wedge\theta^{\bar\beta}\Big)\wedge\eta_1
 \\& -\Big(d\varphi_0+2\phi_\beta\wedge\theta^{\beta}+2\phi_{\bar\beta}\wedge\theta^{\bar\beta}
  -(B_2)_{\alpha\beta}\theta^\alpha\wedge\theta^\beta-(B_2)_{\bar\alpha\bar\beta}\theta^{\bar\alpha}\wedge\theta^{\bar\beta}\Big)\wedge\eta_2
  \\&+\Big(d\varphi_1+\varphi_2\wedge\varphi_3-2i\phi_{\beta}\wedge\theta^{\beta}+2i\phi_{\bar\beta}\wedge\theta^{\bar\beta}+(B_3)_{\alpha\beta}\theta^{\alpha}\wedge\theta^{\beta}+(B_3)_{\bar\alpha\bar\beta}\theta^{\bar\alpha}\wedge\theta^{\bar\beta}\Big)\wedge\eta_3;
 \end{split}
 \end{equation}
  \begin{equation}\label{diff-str-eq-4III}
  \begin{split}
   0\ ={}\ \Big(&d\varphi_2 + \varphi_3\wedge\varphi_1+2\pi_{\alpha\beta}\phi^{\alpha}\wedge\theta^{\beta}+2\pi_{\bar\alpha\bar\beta}\phi^{\bar\alpha}\wedge\theta^{\bar\beta}
   -i(B_1)_{\alpha\beta}\theta^{\alpha}\wedge\theta^{\beta}+i(B_1)_{\bar\alpha\bar\beta}\theta^{\bar\alpha}\wedge\theta^{\bar\beta}\Big)\wedge\eta_1
  \\&-\Big(d\varphi_1+\varphi_2\wedge\varphi_3
  -2i\phi_{\beta}\wedge\theta^{\beta}
  +2i\phi_{\bar\beta}\wedge\theta^{\bar\beta}+i(B_2)_{\alpha\beta}\theta^{\alpha}\wedge\theta^{\beta}-i(B_2)_{\bar\alpha\bar\beta}\theta^{\bar\alpha}\wedge\theta^{\bar\beta}\Big)\wedge\eta_2
  \\&-\Big(d\varphi_0+2\phi_{\beta}\wedge\theta^{\beta}+2\phi_{\bar\beta}\wedge\theta^{\bar\beta}+i(B_3)_{\alpha\beta}\theta^{\alpha}\wedge\theta^{\beta}
  -i(B_3)_{\bar\alpha\bar\beta}\theta^{\bar\alpha}\wedge\theta^{\bar\beta}\Big)\wedge\eta_3.
  \end{split}
  \end{equation}

By wedging the equation \eqref{diff-str-eq-4I} with $\eta_2\wedge\eta_3$ and subtracting from the result the equation \eqref{diff-str-eq-4I} wedged with $\eta_3\wedge\eta_1$, we see that
\begin{equation}
2i(A_1)_{\alpha\bar\beta}\theta^\alpha\wedge\theta^{\bar\beta}\wedge\eta_1\wedge\eta_2\wedge\eta_3=-(B_2)_{\alpha\beta}\theta^\alpha\wedge\theta^{\beta}\wedge\eta_1\wedge\eta_2\wedge\eta_3-2(B_2)_{\bar\alpha\bar\beta}\theta^{\bar\alpha}\wedge\theta^{\bar\beta}\wedge\eta_1\wedge\eta_2\wedge\eta_3,
\end{equation} 
and hence $(A_1)_{\alpha\bar\beta}=0, \ (B_2)_{\alpha\beta}=0$. Proceeding similarly, we obtain $(A_s)_{\alpha\bar\beta}=0$, $(B_s)_{\alpha\beta}=0$ and thus, by \eqref{ver-Gamma},      
the properties \eqref{Gamma-symmetries}. A substitution back into \eqref{diff-str-eq-4I}, \eqref{diff-str-eq-4II} and  \eqref{diff-str-eq-4III} yields:

\begin{equation}\label{diff-str-eq-5}
  \begin{split}
  0={}-\Big(&d\varphi_0+2\phi_{\beta}\wedge\theta^{\beta}+2\phi_{\bar\beta}\wedge\theta^{\bar\beta}\Big)\wedge\eta_1
  +\Big(d\varphi_3 + \varphi_1\wedge\varphi_2
  -2i\pi_{\alpha\beta}\phi^{\alpha}\wedge\theta^{\beta}
  \\&+2i\pi_{\bar\alpha\bar\beta}\phi^{\bar\alpha}\wedge\theta^{\bar\beta} \Big)\wedge\eta_2
  -\Big(d\varphi_2+\varphi_3\wedge\varphi_1+2\pi_{\alpha\beta}\phi^{\alpha}\wedge\theta^{\beta}+2\pi_{\bar\alpha\bar\beta}\phi^{\bar\alpha}\wedge\theta^{\bar\beta}\Big)\wedge\eta_3;\\
  0={}-\Big(&d\varphi_3 + \varphi_1\wedge\varphi_2-2i\pi_{\alpha\beta}\phi^{\alpha}\wedge\theta^{\beta}+2i\pi_{\bar\alpha\bar\beta}\phi^{\bar\alpha}\wedge\theta^{\bar\beta}\Big)\wedge\eta_1
 -\Big(d\varphi_0+2\phi_\beta\wedge\theta^{\beta}
 \\&+2\phi_{\bar\beta}\wedge\theta^{\bar\beta}\Big)\wedge\eta_2
  +\Big(d\varphi_1+\varphi_2\wedge\varphi_3-2i\phi_{\beta}\wedge\theta^{\beta}+2i\phi_{\bar\beta}\wedge\theta^{\bar\beta}\Big)\wedge\eta_3;\\
   0\ ={}\, \ \ \Big(&d\varphi_2 + \varphi_3\wedge\varphi_1+2\pi_{\alpha\beta}\phi^{\alpha}\wedge\theta^{\beta}+2\pi_{\bar\alpha\bar\beta}\phi^{\bar\alpha}\wedge\theta^{\bar\beta}\Big)\wedge\eta_1
  -\Big(d\varphi_1+\varphi_2\wedge\varphi_3
  -2i\phi_{\beta}\wedge\theta^{\beta}
  \\&+2i\phi_{\bar\beta}\wedge\theta^{\bar\beta}\Big)\wedge\eta_2
  -\Big(d\varphi_0+2\phi_{\beta}\wedge\theta^{\beta}+2\phi_{\bar\beta}\wedge\theta^{\bar\beta}\Big)\wedge\eta_3.
  \end{split}
  \end{equation}

These imply the existence of real one-forms $A_s$, $B_{st}$ so that
\begin{equation}\label{As-Bst}
\begin{cases}
d\varphi_0=-2\phi_\beta\wedge\theta^\beta-2\phi_{\bar\beta}\wedge\theta^{\bar\beta}+A_s\wedge\eta_s\\
d\varphi_1=-\varphi_2\wedge\varphi_3+2i\phi_\beta\wedge\theta^\beta-2i\phi_{\bar\beta}\wedge\theta^{\bar\beta}+B_{s1}\wedge\eta_s\\
d\varphi_2=-\varphi_3\wedge\varphi_1-2\pi_{\sigma_\beta}\phi^\sigma\wedge\theta^\beta-2\pi_{\bar\sigma\bar\beta}\phi^{\bar\sigma}\wedge\theta^{\bar\beta}+B_{s2}\wedge\eta_s\\
d\varphi_3=-\varphi_1\wedge\varphi_2+2i\pi_{\sigma_\beta}\phi^\sigma\wedge\theta^\beta-2i\pi_{\bar\sigma\bar\beta}\phi^{\bar\sigma}\wedge\theta^{\bar\beta}+B_{s3}\wedge\eta_s.
\end{cases}
\end{equation}

Substituting \eqref{As-Bst} into \eqref{diff-str-eq-5}, we obtain the relations
\begin{equation*}
B_{11}=B_{22}=B_{33}=0,\quad B_{23}=-B_{32}=A_1,\quad B_{31}=-B_{13}=A_2,\quad B_{12}=-B_{21}=A_3. 
\end{equation*}
Hence, if setting
\begin{equation*}
\psi_s\ \overset{def}{=}\ -A_s,
\end{equation*} 
the one-forms $\Gamma_{\alpha\beta}$, $\phi^\alpha$, $\psi_s$  satisfy \eqref{str-eq-con} and are as required in the theorem.  
 
 To prove the uniqueness, assume that $\hat\Gamma_{\alpha\beta}$, $\hat\phi^\alpha$, $\hat\psi_s$ are any other one-forms satisfying the requirements of the theorem and let
 \begin{equation}
 L_{\alpha\beta}= \Gamma_{\alpha\beta}-\hat\Gamma_{\alpha\beta},\qquad M^\alpha = \phi^\alpha-\hat\phi^\alpha, \qquad N_s=\psi_s-\hat\psi_s.
 \end{equation}
 Then, $L_{\alpha\beta}=L_{\beta\alpha}$, $(\mathfrak jL)_{\alpha\beta}=L_{\alpha\beta}$ and by subtraction, we obtain the identities 
\begin{equation}\label{str-eq-dif}
\begin{cases}
0=-iM^\alpha\wedge\eta_1-\pi^\alpha_{\bar\sigma}M^{\bar\sigma}\wedge(\eta_2+i\eta_3)-\pi^{\alpha\sigma}L_{\sigma\beta}\wedge\theta^\beta\\
0=-N_1\wedge\eta_1-N_2\wedge\eta_2-N_3\wedge\eta_3-2M_\beta\wedge\theta^\beta-2M_{\bar\beta}\wedge\theta^{\bar\beta}\\
0=-N_2\wedge\eta_3+N_3\wedge\eta_2+2iM_\beta\wedge\theta^\beta-2iM_{\bar\beta}\wedge\theta^{\bar\beta}\\
0=-N_3\wedge\eta_1+N_1\wedge\eta_3-2\pi_{\sigma_\beta}M^\sigma\wedge\theta^\beta-2\pi_{\bar\sigma\bar\beta}M^{\bar\sigma}\wedge\theta^{\bar\beta}\\
0=-N_1\wedge\eta_2+N_2\wedge\eta_1+2i\pi_{\sigma_\beta}M^\sigma\wedge\theta^\beta-2i\pi_{\bar\sigma\bar\beta}M^{\bar\sigma}\wedge\theta^{\bar\beta}.
\end{cases}
\end{equation}
 It follows that
\begin{equation}\label{LMN}
\begin{cases}
L_{\alpha\beta}=L_{\alpha\beta\gamma}\theta^\gamma + L_{\alpha\beta\bar\gamma}\theta^{\bar\gamma} + (L_s)_{\alpha\beta}\eta_s\\
M^{\alpha}= M^{\alpha}_\beta\theta^\beta + M^\alpha_{\bar\beta}\theta^{\bar\beta}+(M_s)^{\alpha}\eta_s\\
N_{s}= (N_{s})_\beta\theta^\beta + (N_s)_{\bar\beta}\theta^{\bar\beta}+N_{ts}\eta_t,
\end{cases}
\end{equation}
where $L_{\alpha\beta\gamma}$,  $L_{\alpha\beta\bar\gamma}$, $(L_s)_{\alpha\beta}$, $M^\alpha_\beta$, $M^\alpha_{\bar\beta}$, $(M_s)^\alpha$, $(N_s)^\alpha$, $N_{st}$ are some appropriate coefficients, and also
\begin{equation}\label{prop-L}
L_{\alpha\beta\gamma}=L_{\beta\alpha\gamma},\qquad 
L_{\alpha\beta\bar\gamma}=\pi^{\bar\mu}_{\alpha}\pi^{\bar\nu}_{\beta}L_{\bar\mu\bar\nu\bar\gamma}, \qquad(L_s)_{\alpha\beta}=(L_s)_{\beta\alpha},\qquad 
(L_s)_{\alpha\beta}=\pi^{\bar\mu}_{\alpha}\pi^{\bar\nu}_{\beta}(L_s)_{\bar\mu\bar\nu}.
\end{equation}  
We substitute \eqref{LMN} into the first equation of \eqref{str-eq-dif}. The vanishing of the coefficient of $\theta^\beta\wedge\theta^{\bar\gamma}$  gives
$
L_{\sigma\beta\bar\gamma}=0
$
which, by the second identity of \eqref{prop-L}, implies that also  $L_{\alpha\beta\gamma}=0$. Proceeding similarly, we easily obtain that  the rest of the coefficients in \eqref{LMN} vanish as well and hence the uniqueness of the one-forms $\Gamma_{\alpha\beta}$, $\phi^\alpha$, $\psi_s$.
 
 Finally, the fact that the one-forms
 \begin{equation}
 \{\eta_1,\eta_2,\eta_2,\theta^\alpha,\theta^{\bar\alpha},\varphi_0,\varphi_1,\varphi_2,\varphi_3\}\cup\{\Gamma_{\alpha\beta} : \alpha\le\beta\}\cup\{\phi^\alpha,\phi^{\bar\alpha},\psi_1,\psi_2,\psi_3\}
 \end{equation}
 are pointwise linearly independent is easily derived form the observation that, by construction, $\eta_1,\eta_2,\eta_2,\theta^\alpha,\theta^{\bar\alpha},\varphi_0,\varphi_1,\varphi_2,\varphi_3$ are pointwise linearly independent and semibasic (w.r.t. the projection $\pi_1: P_1\rightarrow P_o$), whereas $$\{\Gamma_{\alpha\beta} : \alpha\le\beta\}\cup\{\phi^\alpha,\phi^{\bar\alpha},\psi_1,\psi_2,\psi_3\}$$ are independent modulo $\{\eta_1,\eta_2,\eta_2,\theta^\alpha,\theta^{\bar\alpha},\varphi_0,\varphi_1,\varphi_2,\varphi_3\}$. The latter is a consequence of the structure equations $\eqref{str-eq-con}$.
 
 \end{proof}

 \section{The Curvature and the Bianchi identities}

In order to understand the curvature components and their properties, we
shall have to compute the full structure equations, the corresponding Bianchi
identities and some of their differential consequences, all in parallel.
Thus we shall first provide three collections of the 
resulting formulae (in the
next three propositions) and then go through all the computations in one
package. Perhaps reading the computations will enlighten the three
propositions best. Further links are available in the appendix.
   
 \begin{prop}[Curvature components] \label{curvature} On $P_1$, there exist unique,  globally defined,  complex-valued functions
 \begin{equation}
 \mathcal S_{\alpha\beta\gamma\delta},\  \mathcal V_{\alpha\beta\gamma},\ \mathcal  L_{\alpha\beta},\ \mathcal  M_{\alpha\beta}, \ \mathcal C_\alpha, \  \mathcal H_\alpha,\  \mathcal P, \  \mathcal Q,\  \mathcal R
 \end{equation}
 so that:
 
 {\bf (I)} Each of the arrays  $\{\mathcal S_{\alpha\beta\gamma\delta}\},\{ \mathcal V_{\alpha\beta\gamma}\},\{ \mathcal L_{\alpha\beta}\}, \{ \mathcal M_{\alpha\beta}\}$ is totally symmetric in its indices.
 
 {\bf (II)} We have
 \begin{equation}\label{properties-curvature}
 \begin{cases}
 (\mathfrak j{\mathcal S})_{\alpha\beta\gamma\delta}=\mathcal S_{\alpha\beta\gamma\delta}\\
 (\mathfrak j{\mathcal L})_{\alpha\beta}=\mathcal L_{\alpha\beta}\\
 \overline {\mathcal R}=\mathcal R.
 \end{cases}
 \end{equation}
 
{\bf  (III)} The exterior derivatives $d\Gamma_{\alpha\beta}$, $d\phi_\alpha$ and $d\psi_s$ are given by  
 \begin{equation}\label{dGamma_ab}
 \begin{split}
 d\Gamma_{\alpha\beta}\ =\ &-\pi^{\sigma\tau}\Gamma_{\alpha\sigma}\wedge\Gamma_{\tau\beta} + 2\pi^{\bar\sigma}_{\alpha}(\phi_\beta\wedge\theta_{\bar\sigma}-\phi_{\bar\sigma}\wedge\theta_\beta)+2\pi^{\bar\sigma}_{\beta}(\phi_\alpha\wedge\theta_{\bar\sigma}-\phi_{\bar\sigma}\wedge\theta_\alpha)\\
 &+\pi^{\sigma}_{\bar\delta}\,\mathcal S_{\alpha\beta\gamma\sigma}\,\theta^{\gamma}\wedge\theta^{\bar\delta}+\Big(\mathcal V_{\alpha\beta\gamma}\,\theta^\gamma
 +\pi^{\bar\sigma}_{\alpha}\,\pi^{\bar\tau}_{\beta}\,\mathcal V_{\bar\sigma\bar\tau\bar\gamma}\,\theta^{\bar\gamma}\Big)\wedge\eta_1\\
 &-i\pi^{\sigma}_{\bar\gamma}\,\mathcal V_{\alpha\beta\sigma}\,\theta^{\bar\gamma}\wedge(\eta_2+i\eta_3)+i(\mathfrak j\mathcal V)_{\alpha\beta\gamma}\,\theta^{\gamma}\wedge(\eta_2-i\eta_3)\\
 &-i\mathcal L_{\alpha\beta}\,(\eta_2+i\eta_3)\wedge(\eta_2-i\eta_3)+\mathcal M_{\alpha\beta}\,\eta_1\wedge(\eta_2+i\eta_3)+(\mathfrak j M)_{\alpha\beta}\,\eta_1\wedge(\eta_2-i\eta_3),
  \end{split}
 \end{equation}
 
 \begin{equation}\label{dphi_a}
 \begin{split}
 d\phi_{\alpha}\ =\ \  \ &\frac{1}{2}(\varphi_0+i\varphi_1)\wedge\phi_\alpha 
 +\frac{1}{2}\pi_{\alpha\gamma}(\varphi_2-i\varphi_3)\wedge\phi^{\gamma}
 -\pi^{\bar\sigma}_{\alpha}\, \Gamma_{\bar\sigma\bar\gamma}\wedge\phi^{\bar\gamma}
 -\frac{i}{2}\,\psi_1\wedge\theta_\alpha\\
  -\ &\frac{1}{2}\,\pi_{\alpha\gamma}(\psi_2-i\psi_3)\wedge\theta^{\gamma}
 -i\pi_{\bar\delta}^{\sigma}\,\mathcal V_{\alpha\gamma\sigma}\,\theta^{\gamma}\wedge\theta^{\bar\delta}+\mathcal M_{\alpha\gamma}\,\theta^\gamma\wedge\eta_1+\pi^{\bar\sigma}_{\alpha}\,\mathcal L_{\bar\sigma\bar\gamma}\,\theta^{\bar\gamma}\wedge\eta_1\\
 +\ &i\mathcal L_{\alpha\gamma}\,\theta^{\gamma}\wedge(\eta_2-i\eta_3)-i\pi^{\sigma}_{\bar\gamma}\,\mathcal M_{\alpha\sigma}\,\theta^{\bar\gamma}\wedge(\eta_2+i\eta_3)
 -\mathcal C_{\alpha}(\eta_2+i\eta_3)\wedge(\eta_2-i\eta_3)\\
 +\ &\mathcal H_\alpha\,\eta_1\wedge(\eta_2+i\eta_3)+i\pi_{\alpha\sigma}\,\mathcal C^\sigma\,\eta_1\wedge(\eta_2-i\eta_3),
  \end{split}
 \end{equation}
 
 \begin{equation}\label{dpsi_1}
 \begin{split}
 d\psi_1\ =\ \  \ &\varphi_0\wedge\psi_1-\varphi_2\wedge\psi_3+\varphi_3\wedge\psi_2-4i \phi_\gamma\wedge\phi^\gamma+4\pi^{\sigma}_{\bar\delta}\,\mathcal L_{\gamma\sigma}\,\theta^\gamma\wedge\theta^{\bar\delta}+4\mathcal C_\gamma\,\theta^\gamma\wedge\eta_1\\
 +\ &4\mathcal C_{\bar\gamma}\,\theta^{\bar\gamma}\wedge\eta_1-4i\pi_{\bar\gamma\bar\sigma}\, \mathcal C^{\bar\sigma}\,\theta^{\bar\gamma}\wedge(\eta_2+i\eta_3)
 +4i\pi_{\gamma\sigma}\, \mathcal C^{\sigma}\,\theta^{\gamma}\wedge(\eta_2-i\eta_3)\\
 +\ &\mathcal P\,\eta_1\wedge(\eta_2+i\eta_3)+\overline{\mathcal P}\,\eta_1\wedge(\eta_2-i\eta_3)+i\mathcal R\,(\eta_2+i\eta_3)\wedge(\eta_2-i\eta_3),
 \end{split}
 \end{equation}

\begin{equation}\label{dpsi_23}
 \begin{split}
 d\psi_2+i\,d\psi_3\ =\ \  \ &(\varphi_0-i\varphi_1)\wedge(\psi_2+i\psi_3)+i(\varphi_2+i\varphi_3)\wedge\psi_1+4\pi_{\gamma\delta}\phi^\gamma\wedge\phi^\delta+4i\pi^{\bar\sigma}_{\gamma}\,\mathcal M_{\bar\sigma\bar\delta}\,\theta^\gamma\wedge\theta^{\bar\delta}\\
 +&\ 4i \pi^{\bar\sigma}_{\gamma}\,\mathcal C_{\bar\sigma}\,\theta^\gamma\wedge\eta_1
 -4\mathcal H_{\bar\gamma}\,\theta^{\bar\gamma}\wedge\eta_1-4i\mathcal C_{\bar\gamma}\,\theta^{\bar\gamma}\wedge(\eta_2+i\eta_3)
 -4i\pi_{\gamma}^{\bar\sigma}\, \mathcal H_{\bar\sigma}\,\theta^{\gamma}\wedge(\eta_2-i\eta_3)\\
 -\ &i\mathcal R\,\eta_1\wedge(\eta_2+i\eta_3)+\overline{\mathcal Q}\,\eta_1\wedge(\eta_2-i\eta_3)-\overline{\mathcal P}\,(\eta_2+i\eta_3)\wedge(\eta_2-i\eta_3).
 \end{split}
 \end{equation}

 \end{prop}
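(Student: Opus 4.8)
The plan is to exploit that, by Theorem~\ref{Theorem_1}, the forms in \eqref{ThrmGlobalCoframing} constitute a global coframing of the complexified tangent bundle of $P_1$. Since $\Gamma_{\alpha\beta},\phi^\alpha,\psi_s$ are the uniquely determined one-forms of that theorem, their exterior derivatives are definite two-forms, hence expand uniquely in the basis of wedge products of pairs of coframing forms; untangling the contractions with the invertible $g$ and $\pi$ then pins down all the coefficient functions $\mathcal S,\dots,\mathcal R$ uniquely and globally, so the whole content of the Proposition is to identify the \emph{shape} of these expansions. The only tool needed is the identity $d^2=0$ applied to the already established structure equations \eqref{str-eq-deta} and \eqref{str-eq-con}. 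Before starting I would record two structural facts: complex conjugation preserves the coframing ($\overline{\theta^\alpha}=\theta^{\bar\alpha}$, $\overline{\varphi_i}=\varphi_i$, $\overline{\Gamma_{\alpha\beta}}=\Gamma_{\bar\alpha\bar\beta}$, $\overline{\phi^\alpha}=\phi^{\bar\alpha}$, $\overline{\psi_s}=\psi_s$), and the algebraic operator $\mathfrak j$ commutes with $d$; combined with \eqref{Gamma-symmetries} this already gives $d\Gamma_{\alpha\beta}=d\Gamma_{\beta\alpha}$, $(\mathfrak j\,d\Gamma)_{\alpha\beta}=d\Gamma_{\alpha\beta}$ and reality of $d\psi_s$, which accounts for part of (I)--(II).

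First I would differentiate \eqref{d-theta} and impose $d(d\theta^\alpha)=0$. In the resulting three-form identity the only unknown two-forms are $d\Gamma_{\sigma\beta}$ (occurring only as $\pi^{\alpha\sigma}d\Gamma_{\sigma\beta}\wedge\theta^\beta$) and $d\phi^\alpha$ (occurring only wedged with the $\eta_s$). Expanding everything in the coframing and using uniqueness of expansions, all the ``vertical'' coefficients of $d\Gamma_{\alpha\beta}$ -- those of $\Gamma\wedge\Gamma$, $\Gamma\wedge\varphi$, $\varphi\wedge\varphi$, $\varphi\wedge\phi$, $\varphi\wedge\theta$, and of the mixed $\Gamma\wedge\theta$, $\phi\wedge\eta$, $\psi\wedge(\cdot)$ terms -- are forced to equal the corresponding Lie-bracket expressions, which produces exactly the algebraic terms $-\pi^{\sigma\tau}\Gamma_{\alpha\sigma}\wedge\Gamma_{\tau\beta}$ and $2\pi^{\bar\sigma}_{\alpha}(\phi_\beta\wedge\theta_{\bar\sigma}-\phi_{\bar\sigma}\wedge\theta_\beta)+2\pi^{\bar\sigma}_{\beta}(\phi_\alpha\wedge\theta_{\bar\sigma}-\phi_{\bar\sigma}\wedge\theta_\alpha)$ of \eqref{dGamma_ab}. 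The only surviving freedom is a genuinely horizontal remainder built from $\theta\wedge\theta$, $\theta\wedge\eta$, $\eta\wedge\eta$, and the index patterns available there (dictated by the $\Gamma$-symmetries above, by reality, and by the $(2,0)$-type of $\pi$) are precisely those carrying the functions $\mathcal S_{\alpha\beta\gamma\delta}$, $\mathcal V_{\alpha\beta\gamma}$, $\mathcal L_{\alpha\beta}$, $\mathcal M_{\alpha\beta}$. In the same identity $d\phi^\alpha$ gets determined modulo $\theta$-terms; that $\theta$-part is then pinned down, up to a horizontal remainder, by $d(d\varphi_0)=0$, which forces the algebraic part of \eqref{dphi_a} and leaves the horizontal functions $\mathcal C_\alpha$, $\mathcal H_\alpha$; finally $d(d\varphi_1)=d(d\varphi_2)=d(d\varphi_3)=0$ fix the algebraic part of $d\psi_s$ and leave the horizontal functions $\mathcal P$, $\mathcal Q$, $\mathcal R$, with $\overline{\mathcal R}=\mathcal R$ by reality of $\psi_1$. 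This yields \eqref{dGamma_ab}--\eqref{dpsi_23} with all the stated shapes, hence (III).

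It then remains to establish the full symmetry claims (I)--(II). As noted, $d\Gamma_{\alpha\beta}=d\Gamma_{\beta\alpha}$ makes $\mathcal S_{\alpha\beta\gamma\delta}$, $\mathcal L_{\alpha\beta}$, $\mathcal M_{\alpha\beta}$ symmetric in $\alpha\leftrightarrow\beta$, $(\mathfrak j\,d\Gamma)=d\Gamma$ gives $(\mathfrak j\mathcal S)=\mathcal S$ and $(\mathfrak j\mathcal L)=\mathcal L$, and $\overline{\mathcal R}=\mathcal R$ is reality. The remaining symmetries -- total symmetry of $\mathcal S$, $\mathcal V$, $\mathcal L$, $\mathcal M$ -- come from the second round of Bianchi identities: reading $d(d\theta^\alpha)=0$ and $d(d\Gamma_{\alpha\beta})=0$ at the lowest weight, the coefficients of $\theta\wedge\theta\wedge\theta$ (in the relevant bar-patterns), after raising and lowering indices with $g$ and $\pi$, force symmetry of $\mathcal S_{\alpha\beta\gamma\delta}$ under $\beta\leftrightarrow\gamma$ (hence, with the $\alpha\beta$-symmetry, total symmetry), and analogously for $\mathcal V$, $\mathcal L$, $\mathcal M$; counting which $g$- and $\pi$-contractions can occur rules out any antisymmetric leftover. (The same computation, pushed one weight further, yields the differential identities of Proposition~\ref{bianchi}.)

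The real difficulty is purely computational: each use of $d^2=0$ is a three-form identity whose coefficients range over a large basis, so one must organize the work -- most efficiently by the weight grading in which $\eta_s$ has weight $2$, $\theta^\alpha$ weight $1$, $\varphi_i$ and $\Gamma_{\alpha\beta}$ weight $0$, $\phi^\alpha$ weight $-1$ and $\psi_s$ weight $-2$ -- so that, order by order, the algebraically forced (weight-homogeneous, ``vertical'') coefficients separate cleanly from the free (strictly higher-weight, horizontal) curvature coefficients, all the while tracking the numerous $g$- and $\pi$-contractions and the action of $\mathfrak j$. The fact that this bookkeeping necessarily closes up in the stated form -- in particular that there is no curvature in the $\eta$-, $\theta$- or $\varphi$-slots -- is exactly what the general theory of parabolic geometries guarantees in advance, and it is the reason the classical exterior-calculus computation is bound to succeed.
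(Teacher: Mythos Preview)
Your proposal is correct and follows the same strategy as the paper: apply $d^2=0$ to the structure equations of Theorem~\ref{Theorem_1} and read off the curvature as the horizontal remainder after the Maurer--Cartan part is forced. The paper's organization is slightly tighter than yours: it extracts the \emph{complete} structure of both $d\Gamma_{\alpha\beta}$ and $d\phi_\alpha$ (all of $\mathcal S,\mathcal V,\mathcal L,\mathcal M,\mathcal C,\mathcal H$, together with their symmetries in (I)--(II)) from the single identity $d^2\theta^\alpha=0$, packaged via the auxiliary two-forms $X_{\alpha\beta}$ and $Y^\alpha$ of \eqref{set-X-Y}, and then obtains $d\psi_s$ (with $\mathcal P,\mathcal Q,\mathcal R$) from $d^2\phi_\alpha=0$. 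Your suggestion to supplement with $d^2\varphi_0=0$ and $d^2\varphi_s=0$ is a valid alternative, but each of those identities mixes the still-unknown $d\phi$ and $d\psi$, so one has to filter by weight more carefully to avoid circularity; the paper's choice of which $d^2$ to use sidesteps this. Also, the total symmetries in (I) already fall out of $d^2\theta^\alpha=0$ together with \eqref{Gamma-symmetries}; invoking $d^2\Gamma_{\alpha\beta}=0$ for them is unnecessary (though not wrong).
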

 
In order to describe further relations and differential consequences between
the curvature components defined by
Proposition~\ref{curvature}, we introduce the following list of one-forms:

\begin{multline}\label{S*}
\mathcal S^\ast_{\alpha\beta\gamma\delta}\overset{def}{\ =\ }
d{\mathcal S}_{\alpha\beta\gamma\delta}- \tilde{\mathcal S}^\ast_{\alpha\beta\gamma\delta} 
\\
\tilde{\mathcal S}^\ast_{\alpha\beta\gamma\delta} =
\pi^{\tau\nu}\Gamma_{\nu\alpha}\,\mathcal S_{\tau\beta\gamma\delta}
 + \pi^{\tau\nu}\Gamma_{\nu\beta}\,\mathcal S_{\alpha\tau\gamma\delta} + 
\pi^{\tau\nu}\Gamma_{\nu\gamma}\,\mathcal S_{\alpha\beta\tau\delta}
 +\pi^{\tau\nu}\Gamma_{\nu\delta}\,\mathcal S_{\alpha\beta\gamma\tau}
 +\varphi_0\, \mathcal S_{\alpha\beta\gamma\delta}
 +2i\big(\pi_{\alpha\tau}\,\mathcal V_{\delta\beta\gamma}+
\\
\pi_{\beta\tau}\,\mathcal V_{\alpha\gamma\delta}+\pi_{\gamma\tau}\,\mathcal V_{\alpha\beta\delta}
 +\pi_{\delta\tau}\,\mathcal V_{\alpha\beta\gamma}\big)\theta^{\tau}
 +2i\Big(g_{\alpha\bar\tau}\,(\mathfrak j\mathcal V)_{\delta\beta\gamma}
 +g_{\beta\bar\tau}\,(\mathfrak j\mathcal V)_{\alpha\delta\gamma}+g_{\gamma\bar\tau}\,(\mathfrak j\mathcal V)_{\alpha\beta\delta}
 +g_{\delta\bar\tau}\,(\mathfrak j\mathcal V)_{\alpha\beta\gamma}\Big)\theta^{\bar\tau}
\end{multline} 
\begin{multline}\label{V*}
 \mathcal V^\ast_{\alpha\beta\gamma}\overset{def}{\ =\ }d{\mathcal V}_{\alpha\beta\gamma}
- \tilde{\mathcal V}^\ast_{\alpha\beta\gamma}
\\ 
\tilde{\mathcal V}^\ast_{\alpha\beta\gamma} = 
\pi^{\tau\nu}\Gamma_{\nu\alpha}\,\mathcal V_{\tau\beta\gamma}
 +\pi^{\tau\nu}\Gamma_{\nu\beta}\,\mathcal V_{\alpha\tau\gamma}
+ \pi^{\tau\nu}\Gamma_{\nu\gamma}\,\mathcal V_{\alpha\beta\tau}
+ i\pi^{\sigma}_{\bar\tau}\,\phi^{\bar\tau}\,\mathcal S_{\alpha\beta\gamma\sigma}
+ \frac{1}{2}\big(3\varphi_0+i\varphi_1\big)\, \mathcal V_{\alpha\beta\gamma} -
\\ 
\frac{1}{2}\big(\varphi_2-i\varphi_3\big)\, (\mathfrak j\mathcal V)_{\alpha\beta\gamma}
- 2\big(\pi_{\alpha\tau}\,\mathcal M_{\beta\gamma}+\pi_{\beta\tau}\,\mathcal M_{\alpha\gamma}+\pi_{\gamma\tau}\,\mathcal M_{\alpha\beta}\big)\theta^{\tau}
- 2\big(g_{\alpha\bar\tau}\, \mathcal L_{\beta\gamma}
 +g_{\beta\bar\tau}\,\mathcal L_{\alpha\gamma}+g_{\gamma\bar\tau}\,\mathcal L_{\alpha\beta}\big)\theta^{\bar\tau}
 \end{multline}
\begin{multline}\label{L*}
 \mathcal L^\ast_{\alpha\beta}\overset{def}{\ =\ }d\mathcal L_{\alpha\beta}- 
\tilde{\mathcal L}^\ast_{\alpha\beta}
\\
\tilde{\mathcal L}^\ast_{\alpha\beta} = 
\pi^{\tau\sigma}\Gamma_{\sigma\alpha}\,\mathcal L_{\tau\beta}
+\pi^{\tau\sigma}\Gamma_{\sigma\beta}\,\mathcal L_{\alpha\tau}
+ 2\varphi_0\,\mathcal L_{\alpha\beta}
+\frac{1}{2}\big(\varphi_2+i\varphi_3\big)\mathcal M_{\alpha\beta}
+\frac{1}{2}\big(\varphi_2-i\varphi_3\big)(\mathfrak j\mathcal M)_{\alpha\beta}
+ \phi^\sigma\,\mathcal V_{\alpha\beta\sigma}
+ \\
\pi^{\bar\mu}_\alpha\,\pi^{\bar\nu}_\beta\,\phi^{\bar\sigma}\,\mathcal V_{\bar\mu\bar\nu\bar\sigma}
+ 2i\big(\pi_{\alpha\tau}\mathcal C_\beta+\pi_{\beta\tau}\mathcal C_\alpha\big)\theta^\tau
+ 2i\big(g_{\alpha\bar\tau}\,\pi^{\bar\sigma}_{\beta}\,\mathcal C_{\bar\sigma}+g_{\beta\bar\tau}\,\pi^{\bar\sigma}_{\alpha}\,\mathcal C_{\bar\sigma}\big)\theta^{\bar\tau}
 \end{multline} 
\begin{multline}\label{M*}
 \mathcal M^\ast_{\alpha\beta}\overset{def}{\ =\ }d\mathcal M_{\alpha\beta}
- \tilde{\mathcal M}^\ast_{\alpha\beta}
\\ 
\tilde{\mathcal M}^\ast_{\alpha\beta} = 
\pi^{\tau\sigma}\Gamma_{\sigma\alpha}\,\mathcal M_{\tau\beta}
+\pi^{\tau\sigma}\Gamma_{\sigma\beta}\,\mathcal M_{\alpha\tau}
+\big(2\varphi_0+i\varphi_1\big)\,\mathcal M_{\alpha\beta}
-\big(\varphi_2-i\varphi_3\big)\mathcal L_{\alpha\beta}
-2\pi^{\sigma}_{\bar\tau}\,\phi^{\bar\tau}\,\mathcal V_{\alpha\beta\sigma}
 + \\
2\big(\pi_{\alpha\tau}\mathcal H_\beta+\pi_{\beta\tau}\mathcal H_\alpha\big)\theta^\tau
 +2i\big(g_{\alpha\bar\tau}\,\mathcal C_{\beta}+g_{\beta\bar\tau}\,\mathcal C_{\alpha}\big)\theta^{\bar\tau}
 \end{multline} 
\begin{multline}\label{C*}
\mathcal C^\ast_{\alpha}\overset{def}{\ =\ }d\mathcal C_\alpha-
\tilde{\mathcal C}^\ast_{\alpha}\\
\tilde{\mathcal C}^\ast_{\alpha} = 
\pi^{\tau\sigma}\Gamma_{\sigma\alpha}\,\mathcal C_\tau 
+ \frac{1}{2}\big(5\varphi_0+i\varphi_1\big)\,\mathcal C_\alpha 
- \pi^{\bar\sigma}_{\alpha}\big(\varphi_2
-i\varphi_3\big)\mathcal C_{\bar\sigma}
- 2i\pi^{\sigma}_{\bar\tau}\,\phi^{\bar\tau}\,\mathcal L_{\alpha\sigma} + 
i\phi^{\tau}\,\mathcal M_{\alpha\tau} 
+\frac{i}{2} \big(\varphi_2+
\\
i\varphi_3\big)\mathcal H_{\alpha}
-\frac{1}{2}\pi_{\alpha\tau}\,\mathcal P\,\theta^\tau
+ \frac{1}{2}g_{\alpha\bar\tau}\,\mathcal R\,\theta^{\bar\tau}
 \end{multline}
\begin{multline}\label{H*}
\mathcal H^\ast_{\alpha}\overset{def}{\ =\ }d\mathcal H_\alpha- 
\tilde{\mathcal H}^\ast_{\alpha}
\\
\tilde{\mathcal H}^\ast_{\alpha} = 
\pi^{\tau\sigma}\Gamma_{\sigma\alpha}\,\mathcal H_\tau 
 +\frac{1}{2}\big(5\varphi_0+3i\varphi_1\big)\,\mathcal H_\alpha
+\frac{3i}{2}\big(\varphi_2-i\varphi_3\big)\mathcal C_{\alpha}
-3\pi^{\sigma}_{\bar\tau}\,\phi^{\bar\tau}\,\mathcal M_{\alpha\sigma}
+\frac{1}{2}\pi_{\alpha\tau}\,\mathcal Q\,\theta^\tau+\frac{i}{2}g_{\alpha\bar\tau}\,\mathcal P\,\theta^{\bar\tau}
 \end{multline}
\begin{equation}\label{R*}
\mathcal R^\ast\overset{def}{\ =\ }d\mathcal R -\tilde{\mathcal
R}^\ast,\quad
\tilde{\mathcal
R}^\ast  = 3\varphi_0\,\mathcal R -\big(\varphi_2+i\varphi_3\big)\mathcal P
-\big(\varphi_2-i\varphi_3\big)\overline{\mathcal P}-8\phi^\tau\,\mathcal C_\tau+8\phi^{\bar\tau}\,\mathcal C_{\bar\tau}
 \end{equation}
 \begin{equation}\label{P*}
\mathcal P^\ast\overset{def}{\ =\ }d\mathcal P - \tilde{\mathcal
P}^\ast,\quad \tilde{\mathcal
P}^\ast =
 \big(3\varphi_0+i\varphi_1\big)\mathcal P - 
\frac{i}{2}\big(\varphi_2 -i\varphi_3\big){\mathcal Q}
+\frac{3}{2}\big(\varphi_2-i\varphi_3\big){\mathcal R}+4i\phi^\tau\mathcal H_{\tau}
-12\pi_{\bar\tau\bar\sigma}\phi^{\bar\tau}\,\mathcal C_{\bar\sigma}
 \end{equation}
  \begin{equation}\label{Q*}
\mathcal Q^\ast\overset{def}{\ =\ }d\mathcal Q - \tilde{\mathcal Q}^\ast
,\quad \tilde{\mathcal Q}^\ast =
 \big(3\varphi_0+2i\varphi_1\big)\mathcal Q - 
2i\big(\varphi_2-i\varphi_3\big){\mathcal P}+
16\pi_{\bar\tau\bar\sigma}\phi^{\bar\tau}\mathcal H^{\bar\sigma}
 \end{equation}

 Using \eqref{properties-curvature} we easily obtain that  one-forms $\mathcal S^\ast_{\alpha\beta\gamma\delta},\, \mathcal V^\ast_{\alpha\beta\gamma},\, \mathcal L^\ast_{\alpha\beta},\,  \mathcal M^\ast_{\alpha \beta}$ are totally symmetric in their indices and
 \begin{equation}
 \begin{cases}
 (\mathfrak j{\mathcal S}^\ast)_{\alpha\beta\gamma\delta}=\mathcal S^\ast_{\alpha\beta\gamma\delta}\\
 (\mathfrak j{\mathcal L}^\ast)_{\alpha\beta}=\mathcal L^\ast_{\alpha\beta}\\
 \overline {\mathcal R^\ast}=\mathcal R^\ast.
 \end{cases}
 \end{equation}

 \begin{prop}[Bianchi identities]\label{bianchi}
The following identities are satisfied:
 \begin{multline}\label{d2Gamma_ab}
 d^2\Gamma_{\alpha\beta}\  = \ \pi^{\sigma}_{\bar\delta}{\mathcal S}^\ast_{\alpha\beta\gamma\sigma}\wedge\theta^\gamma\wedge\theta^{\bar\delta}
+\ {\mathcal V}^\ast_{\alpha\beta\gamma}\wedge\theta^\gamma\wedge\eta_1 
 +\ \pi^{\bar\mu}_\alpha\,\pi^{\bar\nu}_\beta\,{\mathcal V}^\ast_{\bar\mu\bar\nu\bar\gamma}\wedge\theta^{\bar\gamma}\wedge\eta_1 \\
 -\ i\pi^{\sigma}_{\bar\gamma}\,{\mathcal V}^\ast_{\alpha\beta\sigma}\wedge\theta^{\bar\gamma}\wedge\big(\eta_2+i\eta_3\big) 
+\ i\pi^{\bar\mu}_\alpha\,\pi^{\bar\nu}_\beta\,\pi^{\bar\xi}_\gamma\,{\mathcal V}^\ast_{\bar\mu\bar\nu\bar\xi}\wedge\theta^{\gamma}\wedge\big(\eta_2-i\eta_3\big) \\
-i\mathcal L_{\alpha\beta}^\ast\wedge\big(\eta_2+i\eta_3\big)\wedge\big(\eta_2-i\eta_3\big) 
+\mathcal M^\ast_{\alpha\beta}\wedge\eta_1\wedge\big(\eta_2+i\eta_3\big) \\
+\pi^{\bar\mu}_\alpha\,\pi^{\bar\nu}_\beta {\mathcal M}^\ast_{\bar\mu\bar\nu}\wedge\eta_1\wedge\big(\eta_2-i\eta_3\big) \ =\ 0;
 \end{multline} 
 \begin{multline}\label{d2phi_a}
 d^2\phi_\alpha\  = \  
-\ i\pi^{\nu}_{\bar\gamma}\,{\mathcal V}^\ast_{\alpha\beta\nu}\wedge\theta^{\beta}\wedge\theta^{\bar\gamma} 
+\pi^{\bar\mu}_\alpha\,\mathcal L_{\bar\mu\bar\beta}^\ast\wedge\theta^{\bar\beta}\wedge\eta_1
 +\mathcal M^\ast_{\alpha\beta}\wedge\theta^{\beta}\wedge\eta_1\\
-i\pi^{\nu}_{\bar\beta}\,\mathcal M^\ast_{\alpha\nu}\wedge\theta^{\bar\beta}\wedge\big(\eta_2+i\eta_3\big) 
+i\mathcal L^\ast_{\alpha\beta}\wedge\theta^{\beta}\wedge\big(\eta_2-i\eta_3\big) 
-\mathcal C^\ast_\alpha
 \wedge\big(\eta_2+i\eta_3\big)\wedge\big(\eta_2-i\eta_3\big) \\
+i\pi_{\alpha}^{\bar\mu}\,\mathcal C^\ast_{\bar\mu}\wedge\eta_1\wedge\big(\eta_2-i\eta_3\big) 
 +\mathcal H^\ast_\alpha
 \wedge\eta_1\wedge\big(\eta_2+i\eta_3\big) \ =\ 0;
 \end{multline}
 \begin{multline}\label{d2psi_1}
 d^2\psi_1\  = \ 4\pi^\mu_{\bar\gamma}\, \mathcal L^\ast_{\beta\mu}\wedge\theta^{\beta}\wedge\theta^{\bar\gamma}+4\mathcal C^\ast_\beta\wedge\theta^\beta\wedge\eta_1 
 +4\mathcal C^\ast_{\bar\gamma}\wedge\theta^{\bar\gamma}\wedge\eta_1
 +4i\pi_{\beta}^{\bar\mu}\,\mathcal C^\ast_{\bar\mu}\wedge\theta^{\beta}\wedge\big(\eta_2-i\eta_3\big)\\ 
-4i\pi^{\mu}_{\bar\gamma}\,\mathcal C^\ast_{\mu}\wedge\theta^{\bar\gamma}\wedge\big(\eta_2+i\eta_3\big) 
+ \mathcal P^\ast\wedge\eta_1\wedge\big(\eta_2+i\eta_3\big) + \overline{\mathcal P^\ast}\wedge\eta_1\wedge\big(\eta_2-i\eta_3\big)\\
+i\mathcal R^\ast\wedge\big(\eta_2+i\eta_3\big) \wedge\big(\eta_2-i\eta_3\big) 
\ =\ 0;
 \end{multline}
  \begin{multline}\label{d2psi_2}
 d^2\big(\psi_2+i\psi_3\big)\  = \ 4i\pi^{\bar\mu}_{\beta}\, \mathcal M^\ast_{\bar\mu\bar\gamma}\wedge\theta^{\beta}\wedge\theta^{\bar\gamma}
 +4i\pi^{\bar\mu}_{\beta}\,\mathcal C^\ast_{\bar\mu}\wedge\theta^{\beta}\wedge\eta_1
  -4\mathcal H^\ast_{\bar\gamma}\wedge\theta^{\bar\gamma}\wedge\eta_1
  -4\mathcal C^\ast_{\bar\gamma}\wedge\theta^{\bar\gamma}\wedge\big(\eta_2+i\eta_3\big)\\
 -4i\pi^{\bar\mu}_{\beta}\,\mathcal H^\ast_{\bar\mu}\wedge\theta^{\beta}\wedge\big(\eta_2-i\eta_3\big)
 -i\mathcal R^\ast\wedge\eta_1\wedge\big(\eta_2+i\eta_3\big) + \overline{\mathcal Q^\ast}\wedge\eta_1\wedge\big(\eta_2-i\eta_3\big)
 \\ 
-\overline{\mathcal P^\ast}\wedge\big(\eta_2+i\eta_3\big) \wedge\big(\eta_2-i\eta_3\big) 
\ =\ 0.
 \end{multline}
 \end{prop}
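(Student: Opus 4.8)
The plan is to obtain \eqref{d2Gamma_ab}--\eqref{d2psi_2} by applying the exterior derivative once more to each of the structure equations \eqref{dGamma_ab}, \eqref{dphi_a}, \eqref{dpsi_1}, \eqref{dpsi_23} of Proposition~\ref{curvature} and invoking $d^2=0$. Take, say, $\Gamma_{\alpha\beta}$; I would differentiate the right-hand side of \eqref{dGamma_ab} term by term with the Leibniz rule. Each summand there is either a wedge of two of the one-forms $\Gamma,\phi,\psi,\varphi,\theta,\eta$ (possibly with constant or curvature-function coefficients), or a curvature function times a wedge of the semibasic forms $\theta^\alpha,\theta^{\bar\alpha},\eta_s$. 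For the first type I would re-substitute the structure equations: \eqref{str-eq-deta} for $d\eta_s$, \eqref{str-eq-con} (equivalently \eqref{d-theta}) for $d\theta^\alpha$ and $d\varphi_0,\dots,d\varphi_3$, and \eqref{dGamma_ab}--\eqref{dpsi_23} again for $d\Gamma_{\alpha\beta}$, $d\phi_\alpha$ and $d\psi_s$. For the second type I would use $d(F\,\omega)=dF\wedge\omega+F\,d\omega$, replacing $dF$ by $\tilde F^\ast+\mathcal F^\ast$ according to the definitions \eqref{S*}--\eqref{Q*} and $d\omega$ by the structure equations as above.

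The key point is that the ``tilde'' forms $\tilde{\mathcal S}^\ast,\dots,\tilde{\mathcal Q}^\ast$ in \eqref{S*}--\eqref{Q*} were set up precisely so that, after these substitutions, the contributions carrying a factor of one of the connection one-forms $\Gamma_{\alpha\beta},\phi^\alpha,\psi_s,\varphi_i$, together with the purely curvature-valued wedges of semibasic forms produced along the way, cancel identically; this cancellation is exactly the integrability of the structure equations already guaranteed by Proposition~\ref{curvature}, and is where $d^2=0$ enters. What then survives is a sum of terms each linear in one of the starred one-forms $\mathcal S^\ast_{\alpha\beta\gamma\delta},\mathcal V^\ast_{\alpha\beta\gamma},\mathcal L^\ast_{\alpha\beta},\mathcal M^\ast_{\alpha\beta},\mathcal C^\ast_\alpha,\mathcal H^\ast_\alpha,\mathcal R^\ast,\mathcal P^\ast,\mathcal Q^\ast$ wedged with a $2$-form in $\theta^\alpha,\theta^{\bar\alpha},\eta_s$; regrouping these, and using the total symmetry of $\mathcal S^\ast,\mathcal V^\ast,\mathcal L^\ast,\mathcal M^\ast$ and the reality relations $(\mathfrak j\mathcal S^\ast)_{\alpha\beta\gamma\delta}=\mathcal S^\ast_{\alpha\beta\gamma\delta}$, $(\mathfrak j\mathcal L^\ast)_{\alpha\beta}=\mathcal L^\ast_{\alpha\beta}$, $\overline{\mathcal R^\ast}=\mathcal R^\ast$ recorded just above, produces exactly the right-hand sides displayed in \eqref{d2Gamma_ab}--\eqref{d2psi_2}. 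The same procedure applied to \eqref{dphi_a}, \eqref{dpsi_1} and \eqref{dpsi_23} yields \eqref{d2phi_a}, \eqref{d2psi_1} and \eqref{d2psi_2}. Since the left-hand sides $d^2\Gamma_{\alpha\beta}$, $d^2\phi_\alpha$, $d^2\psi_1$, $d^2(\psi_2+i\psi_3)$ are identically zero, so are the displayed expressions, which is the assertion.

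Throughout, the algebraic identities of Section~\ref{prelim} are used repeatedly to recombine the $(2,0)$-, $(1,1)$- and $(0,2)$-type pieces: $\pi^\alpha_{\bar\sigma}\pi^{\bar\sigma}_\beta=-\delta^\alpha_\beta$, the hermitian symmetry $g_{\alpha\bar\beta}=g_{\bar\beta\alpha}$, the skew-symmetry $\pi_{\alpha\beta}=-\pi_{\beta\alpha}$, the normalization $g^{\sigma\bar\tau}\pi_{\alpha\sigma}\pi_{\bar\tau\bar\beta}=-g_{\alpha\bar\beta}$, and the descriptions of $\mathfrak j$ and of $sp(n)$ in Lemma~\ref{sp(n)}, so that $\pi^{\alpha\sigma}\Gamma_{\sigma\beta}$-type terms and their conjugates are handled uniformly. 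The main obstacle is not conceptual but the sheer bulk and delicacy of this bookkeeping: a single summand such as $\mathcal V_{\alpha\beta\gamma}\,\theta^\gamma\wedge\eta_1$ in \eqref{dGamma_ab} generates, upon differentiation, $\theta\wedge\theta\wedge\eta_1$, $\theta\wedge\eta_s\wedge\eta_t$ and $\theta\wedge\theta\wedge\theta$ contributions that must be matched against pieces coming from several other summands, and one must package the $\eta_2\pm i\eta_3$ combinations and the $\mathfrak j$-conjugations consistently for the cancellation of the connection-valued terms to become transparent. In practice I would carry out this computation in one pass together with the proofs of Proposition~\ref{curvature} (which fixes the curvature functions and their symmetries) and Proposition~\ref{secondary} (which reads off the vanishing of the independent coefficients in \eqref{d2Gamma_ab}--\eqref{d2psi_2}), exactly as the text announces.
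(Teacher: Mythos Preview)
Your outline is correct and matches the paper's approach exactly: the authors differentiate each of \eqref{dGamma_ab}--\eqref{dpsi_23}, substitute the full set of structure equations back in, and observe that the $\tilde{\mathcal S}^\ast,\dots,\tilde{\mathcal Q}^\ast$ pieces absorb all connection-form contributions, leaving only the starred one-forms wedged with semibasic $2$-forms. The one nuance you slightly understate is that for $d^2\phi_\alpha$ the paper first lands on an intermediate expression involving auxiliary two-forms $\Psi,\Phi$ and one-forms $\mathcal C'_\alpha,\mathcal H'_\alpha$ (which differ from $\mathcal C^\ast_\alpha,\mathcal H^\ast_\alpha$ by the as-yet-undefined $\mathcal P,\mathcal Q,\mathcal R$), and only after extracting $\mathcal P,\mathcal Q,\mathcal R$ from the analysis of $\Psi,\Phi$ does \eqref{d2phi_a} take its final form --- but this is precisely the ``one pass together with Propositions~\ref{curvature} and~\ref{secondary}'' that you correctly flag at the end.
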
 
 
 \begin{prop}[The secondary derivatives]\label{secondary}\label{sec_der}
 On $P_1$, there exist unique, globally defined, complex valued funct	ions
 \begin{equation}
 \begin{split}
 \mathcal A_{\alpha\beta\gamma\delta\epsilon},\ & \mathcal B_{\alpha\beta\gamma\delta},\  \mathcal C_{\alpha\beta\gamma\delta} ,\ \mathcal  D_{\alpha\beta\gamma},\ \mathcal  E_{\alpha\beta\gamma},
 \ \mathcal  F_{\alpha\beta\gamma},\ \mathcal  G_{\alpha\beta},\ \mathcal  X_{\alpha\beta},\ \mathcal  Y_{\alpha\beta},\ \mathcal  Z_{\alpha\beta}, \\
  &\ (\mathcal N_1)_\alpha,\ (\mathcal N_2)_\alpha,\ (\mathcal N_3)_\alpha,\ (\mathcal N_4)_\alpha,\ (\mathcal N_5)_\alpha,\ \mathcal U_s,\  \mathcal W_s
\end{split}
\end{equation}
 so that:
 
 {\bf (I)} Each of the arrays  $\{\mathcal A_{\alpha\beta\gamma\delta\epsilon}\}$, $\{ \mathcal B_{\alpha\beta\gamma\delta}\}$, $\{ \mathcal C_{\alpha\beta\gamma\delta}\}$, $\{ \mathcal D_{\alpha\beta\gamma}\}$, $\{ \mathcal E_{\alpha\beta\gamma}\}$, $\{ \mathcal F_{\alpha\beta\gamma}\}$, $\{ \mathcal G_{\alpha\beta\gamma}\}$, $\{ \mathcal X_{\alpha\beta}\}$, $\{ \mathcal Y_{\alpha\beta}\}$, $\{ \mathcal Z_{\alpha\beta}\}$ is totally symmetric in its indices.
 
{\bf  (II)} We have  
 \begin{equation}
 \begin{split}
d\mathcal S_{\alpha\beta\gamma\delta}&\ =\ 
\tilde{\mathcal S}^\ast_{\alpha\beta\gamma\delta} 
+\mathcal A_{\alpha\beta\gamma\delta\epsilon}\,\theta^{\epsilon}-\pi^{\sigma}_{\bar\epsilon}(\mathfrak j\mathcal A)_{\alpha\beta\gamma\delta\sigma}\,\theta^{\bar\epsilon}
 +\Big(\mathcal B_{\alpha\beta\gamma\delta}+(\mathfrak j \mathcal B)_{\alpha\beta\gamma\delta}\Big)\eta_1 + i\mathcal C_{\alpha\beta\gamma\delta}\big(\eta_2+i\eta_3\big)\\
&\qquad\qquad\qquad\qquad\qquad\qquad\qquad\qquad\qquad\qquad\qquad\qquad\qquad - i(\mathfrak j\mathcal C)_{\alpha\beta\gamma\delta}\big(\eta_2-i\eta_3\big)\\
d\mathcal V_{\alpha\beta\gamma}&\ =\ 
\tilde{\mathcal V}^\ast_{\alpha\beta\gamma} + \mathcal C_{\alpha\beta\gamma\epsilon}\,\theta^{\epsilon}+\pi^{\sigma}_{\bar\epsilon}\,\mathcal B_{\alpha\beta\gamma\sigma}\,\theta^{\bar\epsilon}
 +\mathcal D_{\alpha\beta\gamma}\eta_1 + \mathcal E_{\alpha\beta\gamma}\big(\eta_2+i\eta_3\big)+\mathcal F_{\alpha\beta\gamma}\big(\eta_2-i\eta_3\big)\\
 d\mathcal L_{\alpha\beta}&\ =\ \tilde{\mathcal L}^\ast_{\alpha\beta}-
(\mathfrak j\mathcal F)_{\alpha\beta\epsilon}\,\theta^{\epsilon}\!-\!\pi^{\sigma}_{\bar\epsilon}\, \mathcal F_{\alpha\beta\sigma}\,\theta^{\bar\epsilon}
 +i\Big((\mathfrak j\mathcal Z)_{\alpha\beta}-\mathcal Z_{\alpha\beta}\Big)\eta_1 +i \mathcal G_{\alpha\beta}\big(\eta_2+i\eta_3\big)-i(\mathfrak j\mathcal G)_{\alpha\beta}\big(\eta_2-i\eta_3\big)\\
  d\mathcal M_{\alpha\beta}&\ =\ \tilde{\mathcal M}^\ast_{\alpha\beta}-\mathcal E_{\alpha\beta\epsilon}\,\theta^{\epsilon}+\pi^{\sigma}_{\bar\epsilon}\Big((\mathfrak j\mathcal F)_{\alpha\beta\sigma}-i\mathcal D_{\alpha\beta\sigma}\Big)\,\theta^{\bar\epsilon}
 +\mathcal X_{\alpha\beta}\eta_1 + \mathcal Y_{\alpha\beta}\big(\eta_2+i\eta_3\big)+\mathcal Z_{\alpha\beta}\big(\eta_2-i\eta_3\big)\\
  d\mathcal C_{\alpha}&\ =\ \tilde{\mathcal C}^\ast_{\alpha}+\mathcal G_{\alpha\epsilon}\,\theta^{\epsilon}-i\pi^{\sigma}_{\bar\epsilon}\mathcal Z_{\alpha\sigma}\,\theta^{\bar\epsilon}
 +(\mathcal N_1)_{\alpha}\eta_1 + (\mathcal N_2)_{\alpha}\big(\eta_2+i\eta_3\big)+(\mathcal N_3)_{\alpha}\big(\eta_2-i\eta_3\big)\\
  d\mathcal H_{\alpha}&\ =\ \tilde{\mathcal H}^\ast_{\alpha}-\mathcal Y_{\alpha\epsilon}\,\theta^{\epsilon}+i\pi^{\sigma}_{\bar\epsilon}\big(\mathcal G_{\alpha\sigma}-\mathcal X_{\alpha\sigma}\big)\theta^{\bar\epsilon}
 +(\mathcal N_4)_{\alpha}\eta_1 + (\mathcal N_5)_{\alpha}\big(\eta_2+i\eta_3\big)\\
 &\qquad\qquad\qquad\qquad\qquad\qquad\qquad\qquad\qquad\qquad\qquad+\Big((\mathcal N_1)_{\alpha}+i\pi^{\bar\sigma}_{\alpha}(\mathcal N_3)_{\bar\sigma}\Big)\big(\eta_2-i\eta_3\big)\\
  d\mathcal R&\ =\ \tilde{\mathcal R}^\ast + 4\pi^{\bar\sigma}_{\epsilon}(\mathcal N_3)_{\bar\sigma}\,\theta^{\epsilon}+4\pi^{\sigma}_{\bar\epsilon}(\mathcal N_3)_{\sigma}\,\theta^{\bar\epsilon}
 +i\big(\mathcal U_3-\overline{\mathcal U}_3\big)\eta_1 -i\big(\mathcal U_1+\mathcal W_3\big)\big(\eta_2+i\eta_3\big)\\
 &\qquad\qquad\qquad\qquad\qquad\qquad\qquad\qquad\qquad\qquad\qquad\qquad+i\big(\overline{\mathcal U}_1+\overline{\mathcal W}_3\big)\big(\eta_2-i\eta_3\big)\\
  d\mathcal P&\ =\ \tilde{\mathcal P}^\ast-4(\mathcal N_2)_{\epsilon}\,\theta^{\epsilon}-4\Big((\mathcal N_3)_{\bar\epsilon}+i\pi^{\sigma}_{\bar\epsilon}(\mathcal N_1)_{\sigma}\Big)\,\theta^{\bar\epsilon}
 +\mathcal U_1\eta_1 + \mathcal U_2\big(\eta_2+i\eta_3\big)+\mathcal U_3\big(\eta_2-i\eta_3\big)\\
  d\mathcal Q&\ =\ \tilde{\mathcal Q}^\ast+4(\mathcal N_5)_{\epsilon}\,\theta^{\epsilon}+4i\pi^{\sigma}_{\bar\epsilon}\Big((\mathcal N_2)_{\sigma}+(\mathcal N_4)_{\sigma}\Big)\,\theta^{\bar\epsilon}
 +\mathcal W_1\eta_1 + \mathcal W_2\big(\eta_2+i\eta_3\big)+\mathcal W_3\big(\eta_2-i\eta_3\big)
 \end{split}
 \end{equation}
  \end{prop}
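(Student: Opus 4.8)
The entire proposition is to be extracted from a single computation: applying $d$ once more to the structure equations \eqref{dGamma_ab}, \eqref{dphi_a}, \eqref{dpsi_1}, \eqref{dpsi_23} of Proposition~\ref{curvature}, in the same way the Bianchi identities \eqref{d2Gamma_ab}--\eqref{d2psi_2} are produced, and then reading off components in the global coframing of $TP_1$ supplied by Theorem~\ref{Theorem_1}. Since that coframing is pointwise independent, every coframe-component of each of $d\mathcal S_{\alpha\beta\gamma\delta},\dots,d\mathcal Q$ is a well-defined function, so uniqueness of $\mathcal A_{\alpha\beta\gamma\delta\epsilon},\dots,\mathcal W_s$ will be automatic. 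The substantive content is: \textbf{(a)} the one-forms $\mathcal S^\ast_{\alpha\beta\gamma\delta}=d\mathcal S_{\alpha\beta\gamma\delta}-\tilde{\mathcal S}^\ast_{\alpha\beta\gamma\delta},\ \dots,\ \mathcal Q^\ast=d\mathcal Q-\tilde{\mathcal Q}^\ast$ of \eqref{S*}--\eqref{Q*} are \emph{semibasic}, i.e., they lie in the span of $\{\theta^\alpha,\theta^{\bar\alpha},\eta_1,\eta_2,\eta_3\}$ with no $\varphi_i$-, $\Gamma_{\alpha\beta}$-, $\phi^\alpha$- or $\psi_s$-component; and \textbf{(b)} the components of $\mathcal S^\ast,\dots,\mathcal Q^\ast$ along $\theta^\alpha,\theta^{\bar\alpha},\eta_s$ are all governed by the single family $\mathcal A,\dots,\mathcal W_s$ exactly as displayed in part~(II). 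The total symmetry of $\mathcal S^\ast,\mathcal V^\ast,\mathcal L^\ast,\mathcal M^\ast$ and the relations $(\mathfrak j\mathcal S^\ast)=\mathcal S^\ast$, $(\mathfrak j\mathcal L^\ast)=\mathcal L^\ast$, $\overline{\mathcal R^\ast}=\mathcal R^\ast$ recorded just above the statement then propagate to the new functions.

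For \textbf{(a)}, one differentiates each of \eqref{dGamma_ab}--\eqref{dpsi_23} again; on the left $d^2\equiv 0$, and on the right one substitutes $d\mathcal S_{\alpha\beta\gamma\delta}=\mathcal S^\ast_{\alpha\beta\gamma\delta}+\tilde{\mathcal S}^\ast_{\alpha\beta\gamma\delta}$ together with the analogues for $\mathcal V,\mathcal L,\mathcal M,\mathcal C,\mathcal H,\mathcal R,\mathcal P,\mathcal Q$, and re-expands every $d\theta^\alpha,d\varphi_i,d\Gamma_{\alpha\beta},d\phi^\alpha,d\psi_s$ via \eqref{str-eq-deta}, \eqref{str-eq-con} and \eqref{dGamma_ab}--\eqref{dpsi_23}. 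The ``principal'' parts $\tilde{\mathcal S}^\ast,\dots,\tilde{\mathcal Q}^\ast$ --- the $\pi^{\tau\nu}\Gamma_{\nu\alpha}\mathcal S_{\tau\beta\gamma\delta}$-type summands, the weight terms $\varphi_0\mathcal S_{\alpha\beta\gamma\delta}$, $\tfrac12(3\varphi_0+i\varphi_1)\mathcal V_{\alpha\beta\gamma}$, $\tfrac12(5\varphi_0+i\varphi_1)\mathcal C_\alpha$, $3\varphi_0\mathcal R$, and the rest, and the matching $(\varphi_2\pm i\varphi_3)$- and $\phi$-corrections --- are built so that, after the substitution, every term still containing one of $\varphi_i,\Gamma_{\alpha\beta},\phi^\alpha,\phi^{\bar\alpha},\psi_s$ cancels identically, apart from the contributions of the (a priori unknown) such components of $\mathcal S^\ast,\dots,\mathcal Q^\ast$ themselves. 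As $d^2\equiv 0$ and each of those residual contributions occupies its own block in the type decomposition of $\Lambda^3 T^\ast_{\mathbb C}P_1$ relative to the coframing, every $\varphi_i$-, $\Gamma_{\alpha\beta}$-, $\phi^\alpha$- or $\psi_s$-component of $\mathcal S^\ast,\dots,\mathcal Q^\ast$ must vanish. This gives (a), and at the same time turns the four identities into the Bianchi identities \eqref{d2Gamma_ab}--\eqref{d2psi_2}.

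For \textbf{(b)}, one then writes each (now semibasic) one-form in the coframe --- $\mathcal S^\ast_{\alpha\beta\gamma\delta}=\mathcal A_{\alpha\beta\gamma\delta\epsilon}\theta^\epsilon+\cdots$, and likewise for its $\theta^{\bar\epsilon}$- and $\eta_s$-parts and for $\mathcal V^\ast,\dots,\mathcal Q^\ast$; the $\mathfrak j$-reality of the starred one-forms already pins the barred and the $\eta_2,\eta_3$ parts into the $\mathfrak j$-conjugate shapes displayed (for instance $(\mathfrak j\mathcal S^\ast)=\mathcal S^\ast$ makes the $\theta^{\bar\epsilon}$-part equal to $-\pi^\sigma_{\bar\epsilon}(\mathfrak j\mathcal A)_{\alpha\beta\gamma\delta\sigma}$, the $\eta_1$-part $\mathfrak j$-symmetric, and the $\eta_2,\eta_3$-parts combine into $i\mathcal C(\eta_2+i\eta_3)-i(\mathfrak j\mathcal C)(\eta_2-i\eta_3)$). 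Substituting all of this into \eqref{d2Gamma_ab}--\eqref{d2psi_2} and setting to zero, one at a time, the coefficients of the basis $3$-forms --- using the basis $\eta_1\wedge(\eta_2+i\eta_3),\ \eta_1\wedge(\eta_2-i\eta_3),\ (\eta_2+i\eta_3)\wedge(\eta_2-i\eta_3)$ of $\Lambda^2\,\mathrm{span}\{\eta_1,\eta_2,\eta_3\}$ and the symmetries of $\mathcal S,\mathcal V,\mathcal L,\mathcal M,\Gamma$ --- the $\theta^\gamma\wedge\theta^\delta\wedge\theta^{\bar\epsilon}$-coefficients give the total symmetry of $\mathcal A,\mathcal B,\mathcal C,\mathcal D,\mathcal E,\mathcal F,\mathcal G,\mathcal X,\mathcal Y,\mathcal Z$ of part~(I); the $\theta^\gamma\wedge\theta^{\bar\delta}\wedge\eta_s$-coefficients force the sharing of a common function between consecutive covariant derivatives ($\mathcal B,\mathcal C$ appearing in both $d\mathcal S$ and $d\mathcal V$; $\mathcal D,\mathcal E,\mathcal F$ among $d\mathcal V,d\mathcal L,d\mathcal M$; $\mathcal G,\mathcal X,\mathcal Y,\mathcal Z$ among $d\mathcal L,d\mathcal M,d\mathcal C,d\mathcal H$; the $(\mathcal N_i)_\alpha$ among $d\mathcal C,d\mathcal H,d\mathcal R,d\mathcal P,d\mathcal Q$); and the $\theta^\gamma\wedge\eta_s\wedge\eta_t$- and $\eta_s\wedge\eta_t\wedge\eta_u$-coefficients, together with the convention $\overline{A_{\alpha\dots}}=A_{\bar\alpha\dots}$ and $\overline{\mathcal R^\ast}=\mathcal R^\ast$, fix the remaining $\eta$-parts and the reality-constrained expressions (such as the $(\eta_2-i\eta_3)$-coefficient $i(\overline{\mathcal U}_1+\overline{\mathcal W}_3)$ of $d\mathcal R$ and $(\mathcal N_1)_\alpha+i\pi^{\bar\sigma}_\alpha(\mathcal N_3)_{\bar\sigma}$ of $d\mathcal H$), while $\mathcal U_s$ and $\mathcal W_s$ enter simply as the $\eta$-parts of $d\mathcal P$ and $d\mathcal Q$. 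Assembling these relations reproduces the whole list of part~(II).

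The main obstacle is step~\textbf{(a)}: checking that the coefficients written into the definitions \eqref{S*}--\eqref{Q*} of $\tilde{\mathcal S}^\ast,\dots,\tilde{\mathcal Q}^\ast$ are exactly those making all $\varphi_i$-, $\Gamma_{\alpha\beta}$-, $\phi^\alpha$- and $\psi_s$-terms in $d^2(\cdot)=0$ cancel. Conceptually this is the statement that the curvature functions are the frame-components of a section of the associated bundle for the appropriate representation of the parabolic $P$, so that $\mathcal S^\ast,\dots,\mathcal Q^\ast$ are horizontal and the ``tilde'' parts encode the $P$-action; done by plain exterior calculus, however, it is a long and unforgiving bookkeeping, with most of the delicacy in the non-commuting $\mathfrak{sp}(1)$-directions $\varphi_1,\varphi_2,\varphi_3$ and the coupled blocks $(\eta_2\pm i\eta_3)$, $(\varphi_2\pm i\varphi_3)$, where the quaternionic weights in the principal terms have to be tracked exactly. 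This interlocking of all the identities is the reason Propositions~\ref{curvature}, \ref{bianchi} and \ref{secondary} are proved together in one pass rather than separately.
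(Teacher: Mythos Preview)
Your outline is correct and follows essentially the same route as the paper: one differentiates \eqref{dGamma_ab}--\eqref{dpsi_23}, verifies that the long expansions of $d^2\Gamma_{\alpha\beta}$, $d^2\phi_\alpha$, $d^2\psi_1$, $d^2(\psi_2+i\psi_3)$ collapse exactly to the Bianchi identities \eqref{d2Gamma_ab}--\eqref{d2psi_2} (this is the content of your step (a), carried out in the paper as the explicit computations \eqref{d2Gamma_ab1}--\eqref{d2phi_a1}), then expands each starred one-form in the semibasic coframe (your step (b), the paper's \eqref{S*_to_H*} and \eqref{P*_to_R*}) and reads off the relations by matching coefficients of the independent basis $3$-forms. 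Your representation-theoretic gloss on step (a)---that the $\tilde{\mathcal S}^\ast,\dots,\tilde{\mathcal Q}^\ast$ encode the $P$-action so that the starred forms are horizontal---is a helpful way to see \emph{why} the cancellation must work, though in the paper it is simply checked by the direct computation; note only that some of what you attribute to $\mathfrak j$-reality alone (e.g.\ the $\eta_1$-coefficient of $\mathcal S^\ast$ being $\mathcal B+(\mathfrak j\mathcal B)$) actually needs the Bianchi relations as well, since $\mathfrak j$-reality only forces $(\mathfrak j(\mathcal S_1))=(\mathcal S_1)$ and the identification $(\mathcal S_1)=\mathcal B+(\mathfrak j\mathcal B)$ with $\mathcal B_{\alpha\beta\gamma\delta}=-\pi^{\bar\sigma}_\delta\mathcal V_{\alpha\beta\gamma,\bar\sigma}$ comes from the $\theta^\gamma\wedge\theta^{\bar\delta}\wedge\eta_1$-coefficient of \eqref{d2Gamma_ab}.
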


 \begin{proof}[Proof of Propositions \ref{curvature}, \ref{bianchi} and \ref{secondary}] A differentiation of the first equation of  \eqref{str-eq-con}, after some calculations using \eqref{str-eq-con}, yields
 \begin{multline*}
 0\ =\ d^2\theta^\alpha\ = \ \pi^{\alpha\sigma}\Big(-d\Gamma_{\sigma\beta}-\pi^{\nu\tau}\Gamma_{\sigma\nu}\wedge\Gamma_{\tau\beta}+ 2\pi^{\bar\tau}_{\sigma}(\phi_\beta\wedge\theta_{\bar\tau}-\phi_{\bar\tau}\wedge\theta_\beta)+2\pi^{\bar\tau}_{\beta}(\phi_\sigma\wedge\theta_{\bar\tau}-\phi_{\bar\tau}\wedge\theta_\sigma)\Big)\wedge\theta^\beta \\
 +i\Big(-d\phi^{\alpha}-\pi^{\alpha\sigma}\Gamma_{\sigma\gamma}\wedge\phi^{\gamma}+\frac{1}{2}(\varphi_0-i\varphi_1)\wedge\phi^\alpha 
 -\frac{1}{2}\pi^{\alpha}_{\bar\gamma}(\varphi_2+i\varphi_3)\wedge\phi^{\bar\gamma}\\ +\frac{i}{2}\,\psi_1\wedge\theta^\alpha
  +\frac{1}{2}\,\pi^{\alpha}_{\bar\gamma}(\psi_2+i\psi_3)\wedge\theta^{\bar\gamma}\Big)\wedge\eta_1\\
  + \pi^{\alpha}_{\bar\beta}\Big(-d\phi^{\bar\beta}-\pi^{\bar\beta\bar\sigma}\Gamma_{\bar\sigma\bar\gamma}\wedge\phi^{\bar\gamma}+\frac{1}{2}(\varphi_0+i\varphi_1)\wedge\phi^{\bar\beta} 
 -\frac{1}{2}\pi^{\bar\beta}_{\gamma}(\varphi_2-i\varphi_3)\wedge\phi^{\gamma}\\ 
 -\frac{i}{2}\,\psi_1\wedge\theta^{\bar\beta}
  +\frac{1}{2}\,\pi^{\bar\beta}_{\gamma}(\psi_2-i\psi_3)\wedge\theta^{\gamma}\Big)\wedge(\eta_2+i\eta_3)
 \end{multline*}
 
 If we set
 \begin{equation}\label{set-X-Y}
 \begin{split}
 X_{\alpha\beta}=&d\Gamma_{\alpha\beta}+\pi^{\nu\tau}\Gamma_{\alpha\nu}\wedge\Gamma_{\tau\beta}- 2\pi^{\bar\tau}_{\alpha}(\phi_\beta\wedge\theta_{\bar\tau}+\phi_{\bar\tau}\wedge\theta_\beta)-2\pi^{\bar\tau}_{\beta}(\phi_\alpha\wedge\theta_{\bar\tau}-\phi_{\bar\tau}\wedge\theta_\alpha)\\
 Y^{\alpha}=&d\phi^{\alpha}+\pi^{\alpha\sigma}\Gamma_{\sigma\gamma}\wedge\phi^{\gamma}-\frac{1}{2}(\varphi_0-i\varphi_1)\wedge\phi^\alpha 
 +\frac{1}{2}\pi^{\alpha}_{\bar\gamma}(\varphi_2+i\varphi_3)\wedge\phi^{\bar\gamma}\\ 
 &-\frac{i}{2}\,\psi_1\wedge\theta^\alpha
  -\frac{1}{2}\,\pi^{\alpha}_{\bar\gamma}(\psi_2+i\psi_3)\wedge\theta^{\bar\gamma},
 \end{split}
 \end{equation}
 the above equation reads as
 \begin{equation}\label{simp-eq-d2theta}
 \pi^{\alpha\sigma}X_{\sigma\beta}\wedge\theta^\beta + iY^{\alpha}\wedge\eta_1+\pi^{\alpha}_{\bar\beta}Y^{\bar\beta}\wedge(\eta_2+i\eta_3)\ = \ 0.
 \end{equation}
 Consequently, there exist one-forms  $A_{\alpha\beta\gamma}$ so that 
 \begin{equation*}
 X_{\alpha\beta}\equiv A_{\alpha\beta\gamma}\wedge\theta^{\gamma}\mod{\{\eta_1,\eta_2+i\eta_3\}}.
 \end{equation*}

A small calculation using  \eqref{set-X-Y} shows that
\begin{equation}\label{prop-X_ab}
X_{\alpha\beta}=X_{\beta\alpha},\qquad (\mathfrak jX)_{\alpha\beta}=X_{\alpha\beta},
\end{equation}
and therefore,
\begin{equation*}
 A_{\alpha\beta\gamma}\wedge\theta^\gamma\ \equiv\ \pi^{\bar\sigma}_{\alpha}\,\pi^{\bar\tau}_{\beta}\,A_{\bar\sigma\bar\tau\bar\gamma}\wedge\theta^{\bar\gamma} \mod{\{\eta_1,\eta_2+i\eta_3,\eta_2-i\eta_3\}}.
 \end{equation*}
It follows that $A_{\alpha\beta\gamma}\equiv 0$ modulo $\{\theta^\gamma,\theta^{\bar\gamma},\eta_1,\eta_2+i\eta_3,\eta_2-i\eta_3\}$ and thus, there exist functions $A_{\alpha\beta\gamma\delta}, A_{\alpha\beta\gamma\bar\delta}$ so that
\begin{equation*}
X_{\alpha\beta}\equiv A_{\alpha\beta\gamma\delta}\,\theta^{\gamma}\wedge\theta^{\delta}+A_{\alpha\beta\gamma\bar\delta}\,\theta^{\gamma}\wedge\theta^{\bar\delta} \mod{\{\eta_1,\eta_2+i\eta_3,\eta_2-i\eta_3\}}
\end{equation*}
 and
 \begin{equation}
 \begin{cases}\label{A_abcd prop}
 A_{\alpha\beta\gamma\delta}=A_{\beta\alpha\gamma\delta}\\
 A_{\alpha\beta\gamma\delta}=-A_{\alpha\beta\delta\gamma}\\
 A_{\alpha\beta\gamma\bar\delta}=A_{\beta\alpha\gamma\bar\delta}.
 \end{cases}
 \end{equation}
 
 Substituting back into \eqref{simp-eq-d2theta} gives $A_{\alpha\beta\gamma\bar\delta}\,\theta^\gamma\wedge\theta^{\bar\delta}\wedge\theta^\beta=0$ and therefore, the array $\{A_{\alpha\beta\gamma\bar\delta}\}$ is totally symmetric in the indices $\alpha,\beta,\gamma$. We have also
 \begin{equation*}
 \begin{split}
 0=(\mathfrak jX)_{\alpha\beta}-X_{\alpha\beta}=\pi^{\bar\sigma}_{\alpha}\,\pi^{\bar\tau}_\beta\, A_{\bar\sigma\bar\tau\bar\gamma\bar\delta}\,\theta^{\bar\gamma}\wedge\theta^{\bar\delta}
 &+\pi^{\bar\sigma}_{\alpha}\,\pi^{\bar\tau}_\beta\, A_{\bar\sigma\bar\tau\bar\gamma\delta}\,\theta^{\bar\gamma}\wedge\theta^{\delta}\\&-A_{\alpha\beta\gamma\delta}\theta^{\gamma}\wedge\theta^{\delta}
 -A_{\alpha\beta\gamma\bar\delta}\theta^{\gamma}\wedge\theta^{\bar\delta}+ \dots
 \end{split}
 \end{equation*} 
 where the omitted terms are vanishing modulo $\{\eta_1,\eta_2+i\eta_3,\eta_2-i\eta_3\}$.
 Therefore,  $A_{\alpha\beta\gamma\delta}=0$ (in view of the second line of \eqref{A_abcd prop}) and 
 \begin{equation}\label{symmetry-S}
 A_{\alpha\beta\gamma\bar\delta}=-\pi^{\bar\sigma}_{\alpha}\,\pi^{\bar\tau}_\beta\, A_{\bar\sigma\bar\tau\bar\delta\gamma}.
 \end{equation}
 
 Let us define
 \begin{equation}
 \mathcal S_{\alpha\beta\gamma\delta}\overset{def}=-\pi^{\bar\sigma}_\delta \,A_{\alpha\beta\gamma\bar\sigma}.
 \end{equation}
 Then, since the array $\{A_{\alpha\beta\gamma\bar\delta}\}$ is totally symmetric in the indices $\alpha,\beta,\gamma$, equation \eqref{symmetry-S} implies that 
 the array $\{\mathcal S_{\alpha\beta\gamma\delta}\}$ is totally symmetric in all of its indices and $(\mathfrak j\mathcal S)_{\alpha\beta\gamma\delta}=\mathcal S_{\alpha\beta\gamma\delta}$.
 Furthermore, we have
 \begin{equation}\label{X_ab-shape}
 X_{\alpha\beta}\ \equiv\ \pi^{\sigma}_{\bar\delta}\,\mathcal S_{\alpha\beta\gamma\sigma}\,\theta^{\gamma}\wedge\theta^{\bar\delta}+(B_1)_{\alpha\beta}\wedge\eta_1 
 + (B_2)_{\alpha\beta}\wedge(\eta_2+i\eta_3)+(B_3)_{\alpha\beta}\wedge(\eta_2-i\eta_3)
 \end{equation}
 for some appropriate one-forms $(B_s)_{\alpha\beta}$. 
 
 On the other hand, \eqref{simp-eq-d2theta} implies that the two-forms $Y^\alpha$ and $\pi^{\alpha}_{\bar\beta}Y^{\bar\beta}$ are vanishing modulo $\{\theta^\gamma,\eta_1,\eta_2+i\eta_3\}$ and thus, the same holds true for $Y^{\bar\alpha}$. Therefore, there exist functions $C_{\bar\alpha\beta\bar\gamma}$ for which 
\begin{equation}\label{Y_a-hor}
Y^{\alpha}\ \equiv\ C^{\alpha}_{\dt\beta\bar\gamma}\,\theta^{\beta}\wedge\theta^{\bar\gamma} \mod{\eta_s}.
\end{equation}  
 
 We substitute \eqref{X_ab-shape} and \eqref{Y_a-hor} into \eqref{simp-eq-d2theta} to obtain, modulo $\{\eta_1\wedge\eta_2,\eta_2\wedge\eta_3,\eta_3\wedge\eta_1\}$,
 \begin{multline}\label{0=B1-B2-B3}
 \pi^{\alpha\sigma}(B_1)_{\sigma\beta}\wedge\eta_1\wedge\theta^\beta +\pi^{\alpha\sigma}(B_2)_{\sigma\beta}\wedge(\eta_2+i\eta_3)\wedge\theta^\beta +\pi^{\alpha\sigma}(B_3)_{\sigma\beta}\wedge(\eta_2-i\eta_3)\wedge\theta^\beta\\
 +iC^\alpha_{\dt\beta\bar\gamma}\theta^{\beta}\wedge\theta^{\bar\gamma}\wedge\eta_1+\pi^{\alpha}_{\bar\sigma}C^{\bar\sigma}_{\dt\bar\gamma\beta}\theta^{\bar\gamma}\wedge\theta^{\beta}\wedge(\eta_2+i\eta_3)\ \equiv\ 0.
 \end{multline}
 
 Consequently, 
 \begin{equation*}
 \begin{cases}
 (B_1)_{\alpha\beta}\equiv 0 \mod{\{\theta^\gamma,\theta^{\bar\gamma},\eta_s\}}\\
 (B_3)_{\alpha\beta}\equiv 0 \mod{\{\theta^\gamma,\eta_s\}}
 \end{cases}
 \end{equation*}
 
 By \eqref{prop-X_ab}, we have that $(B_2)_{\alpha\beta}=\pi^{\bar\sigma}_\alpha\,\pi^{\bar\tau}_\beta\, (B_3)_{\bar\sigma\tau}$ and therefore, 
  there exist functions $(B_1)_{\alpha\beta\gamma}$, $(B_1)_{\alpha\beta\bar\gamma}$ and $(B_3)_{\alpha\beta\gamma}$ so that
 \begin{equation*}
 \begin{cases}
 (B_1)_{\alpha\beta}\equiv (B_1)_{\alpha\beta\gamma}\,\theta^\gamma + (B_1)_{\alpha\beta\bar\gamma}\,\theta^{\bar\gamma} \\
 (B_2)_{\alpha\beta}\equiv \pi^{\bar\sigma}_\alpha\,\pi^{\bar\tau}_\beta (B_3)_{\bar\sigma\bar\tau\bar\gamma}\,\theta^{\bar\gamma}\qquad \mod{\eta_s}\\
 (B_3)_{\alpha\beta}\equiv (B_3)_{\alpha\beta\gamma}\,\theta^{\gamma}.
 \end{cases}
 \end{equation*}
 
 On the account of \eqref{prop-X_ab} and  \eqref{X_ab-shape}, we obtain that the arrays $\{(B_1)_{\alpha\beta\gamma}\}$, $\{(B_1)_{\alpha\beta\bar\gamma}\}$, $\{(B_3)_{\alpha\beta\gamma}\}$ are symmetric in $\alpha$, $\beta$ and satisfy 
 \begin{equation}
 (B_1)_{\alpha\beta\bar\gamma}=\pi^{\bar\sigma}_\alpha\,\pi^{\bar\tau}_\beta (B_1)_{\bar\sigma\bar\tau\bar\gamma}.
 \end{equation} 
 
 Substituting back into \eqref{0=B1-B2-B3} yields
 \begin{multline*}
 \pi^{\alpha\sigma}(B_1)_{\sigma\beta\gamma}\,\theta^{\beta}\wedge\theta^{\gamma}\wedge\eta_1
 +g^{\alpha\bar\sigma}\Big(-\pi^{\bar\tau}_\beta(B_1)_{\bar\sigma\bar\tau\bar\gamma}+iC_{\bar\sigma\beta\bar\gamma}\Big)\theta^\beta\wedge\theta^{\bar\gamma}\wedge\eta_1\\
 +\pi^{\alpha\sigma}\Big(\pi^{\bar\tau}_\sigma\,\pi^{\bar\mu}_\beta\,(B_3)_{\bar\tau\bar\mu\bar\gamma}+C_{\sigma\bar\gamma\beta}\Big)\,\theta^\beta\wedge\theta^{\bar\gamma}\wedge(\eta_2+i\eta_3)
 +\pi^{\alpha\sigma}\,(B_3)_{\sigma\beta\gamma}\,\theta^\beta\wedge\theta^\gamma\wedge(\eta_2-i\eta_3)\ =\ 0.
 \end{multline*}
 It follows that the arrays  $\{(B_1)_{\alpha\beta\gamma}\}$ and $\{(B_3)_{\alpha\beta\gamma}\}$ are totally symmetric in their indices, and we have the identities 
 \begin{equation*}
-\pi^{\bar\tau}_\beta(B_1)_{\bar\sigma\bar\tau\bar\gamma}+iC_{\bar\sigma\beta\bar\gamma}=0,\qquad
\pi^{\bar\tau}_\sigma\,\pi^{\bar\mu}_\beta\,(B_3)_{\bar\tau\bar\mu\bar\gamma}+C_{\sigma\bar\gamma\beta}=0.
\end{equation*}
Hence,
\begin{equation*}
C_{\bar\alpha\beta\bar\gamma}=-i\pi^{\bar\sigma}_{\beta}\,(B_1)_{\bar\alpha\bar\sigma\bar\gamma}
\end{equation*}
and also	
\begin{equation*}
(B_3)_{\alpha\beta\gamma}=-\pi^{\bar\sigma}_\alpha\,\pi^{\bar\tau}_{\beta}C_{\bar\sigma\gamma\bar\tau}=
i\pi^{\bar\sigma}_\alpha\,\pi^{\bar\tau}_{\beta}\,\pi^{\bar\mu}_{\gamma}(B_1)_{\bar\sigma\bar\tau\bar\mu}.
\end{equation*}

Setting 
\begin{equation*}
\mathcal V_{\alpha\beta\gamma}\overset{def}{=}(B_1)_{\alpha\beta\gamma},
\end{equation*}
we obtain 
 \begin{equation*}
 \begin{cases}
 (B_1)_{\alpha\beta}\ \equiv\ \mathcal V_{\alpha\beta\gamma}\,\theta^\gamma + \pi^{\bar\sigma}_{\alpha}\,\pi^{\bar\tau}_\beta\mathcal V_{\bar\sigma\bar\tau\bar\gamma}\,\theta^{\bar\gamma}\\
 (B_2)_{\alpha\beta}\ \equiv\ -i \pi^{\sigma}_{\bar\gamma}\mathcal V_{\alpha\beta\sigma}\,\theta^{\bar\gamma}\\
 (B_3)_{\alpha\beta}\ \equiv\  i (\mathfrak j\mathcal V)_{\alpha\beta\gamma}\,\theta^{\gamma}\\
 Y^{\alpha}\ \equiv\ -i\pi^{\bar\sigma}_\beta\,\mathcal V^{\alpha}_{\dt\bar\sigma\bar\gamma}\,\theta^{\beta}\wedge\theta^{\bar\gamma}.
 \end{cases}
 \quad \mod{\eta_s}
 \end{equation*}
Thus, for some appropriate functions $(A_s)_{\alpha\beta}$, we have 
\begin{equation}
 \begin{split}\label{eq-X_ab}
  X_{\alpha\beta}=&\pi^{\sigma}_{\bar\delta}\,\mathcal S_{\alpha\beta\gamma\sigma}\,\theta^{\gamma}\wedge\theta^{\bar\delta}+\Big(\mathcal V_{\alpha\beta\gamma}\,\theta^\gamma
 +\pi^{\bar\sigma}_{\alpha}\,\pi^{\bar\tau}_{\beta}\,\mathcal V_{\bar\sigma\bar\tau\bar\gamma}\,\theta^{\bar\gamma}\Big)\wedge\eta_1\\
 &-i\pi^{\sigma}_{\bar\gamma}\,\mathcal V_{\alpha\beta\sigma}\,\theta^{\bar\gamma}\wedge(\eta_2+i\eta_3)+i(\mathfrak j\mathcal V)_{\alpha\beta\gamma}\,\theta^{\gamma}\wedge(\eta_2-i\eta_3)\\
 &+(A_1)_{\alpha\beta}\,\eta_1\wedge(\eta_2+i\eta_3)+(A_2)_{\alpha\beta}\,\eta_1\wedge(\eta_2-i\eta_3)+(A_3)_{\alpha\beta}(\eta_2+i\eta_3)\wedge(\eta_2-i\eta_3).
 \end{split} 
 \end{equation}
Clearly, by \eqref{prop-X_ab}, $\{(A_s)_{\alpha\beta}\}$ are symmetric in the $\alpha,\beta$ and satisfy
\begin{equation}\label{prop-A_sab}
(A_2)_{\alpha\beta}=\pi^{\bar\sigma}_\alpha\,\pi^{\bar\tau}_\beta\,(A_1)_{\bar\sigma\bar\tau},\qquad (A_3)_{\alpha\beta}=-\pi^{\bar\sigma}_\alpha\,\pi^{\bar\tau}_\beta\,(A_3)_{\bar\sigma\bar\tau}.
\end{equation}

 Using one more time the argument that both $Y^\alpha$ and $Y^{\bar\alpha}$ are vanishing modulo $\{\theta^\gamma,\eta_1,\eta_2+i\eta_3\}$, we deduce that there exist functions $C_\alpha$,  $D_{\alpha\beta}$, $F_{\alpha\bar\beta}$, and one-forms $E_\alpha$ so that 
 \begin{equation}\label{eq-Y_a}
 \begin{split}
 Y^{\alpha}=- i\pi^{\bar\sigma}_\beta\,\mathcal V^{\alpha}_{\dt\bar\sigma\bar\gamma}\,\theta^{\beta}\wedge\theta^\gamma+D^{\alpha}_{\bar\beta}\,\theta^{\bar\beta}\wedge(\eta_2+i\eta_3)
 &+F^\alpha_\beta\,\theta^\beta\wedge(\eta_2-i\eta_3) \\&+ C^\alpha(\eta_2+i\eta_3)\wedge(\eta_2-i\eta_3)+E^\alpha\wedge\eta_1.
 \end{split}
 \end{equation}
 
 Substituting \eqref{eq-X_ab} and \eqref{eq-Y_a} back into \eqref{simp-eq-d2theta} gives
 \begin{multline}\label{subs-X_ab-Y_a}
 \pi^{\alpha\sigma}(A_1)_{\sigma\beta}\,\theta^\beta\wedge\eta_1\wedge(\eta_2+i\eta_3)
 +\pi^{\alpha\sigma}(A_2)_{\sigma\beta}\,\theta^\beta\wedge\eta_1\wedge(\eta_2-i\eta_3)\\
 +\pi^{\alpha\sigma}(A_3)_{\sigma\beta}\,\theta^\beta\wedge(\eta_2+i\eta_3)\wedge(\eta_2-i\eta_3)-iF^{\alpha}_{\beta}\,\theta^{\beta}\wedge\eta_1\wedge(\eta_2-i\eta_3)\\
 -iD^{\alpha}_{\bar\beta}\,\theta^{\bar\beta}\wedge\eta_1\wedge(\eta_2+i\eta_3)-\pi^{\alpha}_{\bar\sigma}\,D^{\bar\sigma}_\beta\,\theta^\beta\wedge(\eta_2+i\eta_3)\wedge(\eta_2-i\eta_3)\\
 +iC^\alpha \eta_1\wedge(\eta_2+i\eta_3)\wedge(\eta_2-i\eta_3)+\pi^{\alpha}_{\bar\beta}\,E^{\bar\beta}\wedge\eta_1\wedge(\eta_2+i\eta_3)\ =\ 0.
 \end{multline}
 
 By considering the coefficients of $\theta^\beta\wedge\eta_1\wedge(\eta_2-i\eta_3)$ and $\theta^\beta\wedge(\eta_2+i\eta_3)\wedge(\eta_2-i\eta_3)$, we obtain
 \begin{equation}\label{F_ab-D_ab}
 \begin{cases}
 F^{\alpha}_\beta=-i\pi^{\alpha\sigma}(A_2)_{\sigma\beta}\overset{\eqref{prop-A_sab}}{=} i\pi^{\bar\sigma}_\beta\,(A_1)^{\alpha}_{\bar\sigma}\\
 D_{\alpha\beta}=-(A_3)_{\alpha\beta}
 \end{cases}
 \end{equation}
 and  thus, \eqref{subs-X_ab-Y_a} simplifies to
 \begin{multline}\label{subs-X_ab-Y_a-2}
 \pi^{\alpha\sigma}(A_1)_{\sigma\beta}\,\theta^\beta\wedge\eta_1\wedge(\eta_2+i\eta_3)
 -iD^{\alpha}_{\bar\beta}\,\theta^{\bar\beta}\wedge\eta_1\wedge(\eta_2+i\eta_3)\\
 +iC^\alpha \eta_1\wedge(\eta_2+i\eta_3)\wedge(\eta_2-i\eta_3)+\pi^{\alpha}_{\bar\beta}\,E^{\bar\beta}\wedge\eta_1\wedge(\eta_2+i\eta_3)\ =\ 0.
 \end{multline}
 
 By \eqref{subs-X_ab-Y_a-2}, we obtain  
 \begin{equation}\label{E_a-shape}
 E^\alpha=E^\alpha_\beta\,\theta^\beta +E^\alpha_{\bar\beta}\,\theta^{\bar\beta}+(E_1)^\alpha\eta_1+(E_2)^\alpha(\eta_2+i\eta_3)+(E_3)^\alpha(\eta_2-i\eta_3),
 \end{equation}
 where $E^\alpha_\beta$, $E^\alpha_{\bar\beta}$ and $(E_s)^\alpha$ are some appropriate coefficients.
 Furthermore, by substituting \eqref{E_a-shape} into \eqref{subs-X_ab-Y_a-2}, we get
 \begin{equation}\label{E's-eq}
 \begin{cases}
 E_{\alpha\beta}=(A_1)_{\alpha\beta}\\
 E^{\alpha}_{\beta}=-i\pi^{\alpha\sigma}\,D_{\sigma\beta}\overset{\eqref{F_ab-D_ab}}{=}i\pi^{\alpha\sigma}(A_3)_{\sigma\beta}\\
 (E_2)^\alpha=-i\pi^{\alpha}_{\bar\sigma}\,C^{\bar\sigma}.
 \end{cases}
 \end{equation}
 
 Setting
 \begin{equation}\label{def-LMCH}
 \begin{cases}
\mathcal L_{\alpha\beta} \overset{def}{=}i(A_3)_{\alpha\beta}\\
 \mathcal M_{\alpha\beta}\overset{def}{=}(A_1)_{\alpha\beta}\\
 \mathcal C^\alpha \overset{def}{=} C^\alpha\\
\mathcal H^\alpha \overset{def}{=} -(E_3)^\alpha,
 \end{cases}
 \end{equation}
we obtain, by \eqref{set-X-Y}, \eqref{eq-X_ab}, \eqref{eq-Y_a}, \eqref{F_ab-D_ab},  \eqref{F_ab-D_ab}, \eqref{E's-eq} and \eqref{def-LMCH}, 
the relations \eqref{dGamma_ab} and \eqref{dphi_a}. 

We proceed by differentiating \eqref{dGamma_ab} and \eqref{dphi_a} one more time. After some rather long but straightforward calculations, we obtain  
\begin{multline}\label{d2Gamma_ab1}
 0\ =\ d^2\Gamma_{\alpha\beta}\  = \ \pi^{\sigma}_{\bar\delta}\bigg[d{\mathcal S}_{\alpha\beta\gamma\sigma}-\pi^{\tau\nu}\Gamma_{\nu\alpha}\,\mathcal S_{\tau\beta\gamma\sigma}
 -\pi^{\tau\nu}\Gamma_{\nu\beta}\,\mathcal S_{\alpha\tau\gamma\sigma}-\pi^{\tau\nu}\Gamma_{\nu\gamma}\,\mathcal S_{\alpha\beta\tau\sigma}
 -\pi^{\tau\nu}\Gamma_{\nu\sigma}\,\mathcal S_{\alpha\beta\gamma\tau}\\
 -\varphi_0\, \mathcal S_{\alpha\beta\gamma\sigma}
 -2i\big(\pi_{\alpha\tau}\,\mathcal V_{\sigma\beta\gamma}+\pi_{\beta\tau}\,\mathcal V_{\alpha\sigma\gamma}+\pi_{\gamma\tau}\,\mathcal V_{\alpha\beta\sigma}
 +\pi_{\sigma\tau}\,\mathcal V_{\alpha\beta\gamma}\big)\theta^{\tau}
 -2i\Big(g_{\alpha\bar\tau}\,(\mathfrak j\mathcal V)_{\sigma\beta\gamma}\\
 +g_{\beta\bar\tau}\,(\mathfrak j\mathcal V)_{\alpha\sigma\gamma}+g_{\gamma\bar\tau}\,(\mathfrak j\mathcal V)_{\alpha\beta\sigma}
 +g_{\sigma\bar\tau}\,(\mathfrak j\mathcal V)_{\alpha\beta\gamma}\Big)\theta^{\bar\tau}\bigg]\wedge\theta^\gamma\wedge\theta^{\bar\delta} 
 \end{multline}
\begin{multline*}
 \qquad\qquad+\ \bigg[d{\mathcal V}_{\alpha\beta\gamma}-\pi^{\tau\nu}\Gamma_{\nu\alpha}\,\mathcal V_{\tau\beta\gamma}
 -\pi^{\tau\nu}\Gamma_{\nu\beta}\,\mathcal V_{\alpha\tau\gamma}-\pi^{\tau\nu}\Gamma_{\nu\gamma}\,\mathcal V_{\alpha\beta\tau}
 +i\pi^{\sigma}_{\bar\tau}\,\phi^{\bar\tau}\,\mathcal S_{\alpha\beta\gamma\sigma}-\frac{1}{2}\big(3\varphi_0+i\varphi_1\big)\, \mathcal V_{\alpha\beta\gamma}\\
 +\frac{1}{2}\big(\varphi_2-i\varphi_3\big)\, (\mathfrak j\mathcal V)_{\alpha\beta\gamma}
 +2\big(\pi_{\alpha\tau}\,\mathcal M_{\beta\gamma}+\pi_{\beta\tau}\,\mathcal M_{\alpha\gamma}+\pi_{\gamma\tau}\,\mathcal M_{\alpha\beta}\big)\theta^{\tau}\\
 +2\big(g_{\alpha\bar\tau}\, \mathcal L_{\beta\gamma}
 +g_{\beta\bar\tau}\,\mathcal L_{\alpha\gamma}+g_{\gamma\bar\tau}\,\mathcal L_{\alpha\beta}\big)\theta^{\bar\tau}\bigg]\wedge\theta^\gamma\wedge\eta_1 
 \end{multline*}
\begin{multline*}
 \qquad\qquad+\ \pi^{\bar\mu}_\alpha\,\pi^{\bar\nu}_\beta\,\bigg[d{\mathcal V}_{\bar\mu\bar\nu\bar\gamma}-\pi^{\bar\tau\bar\sigma}\Gamma_{\bar\sigma\bar\mu}\,\mathcal V_{\bar\tau\bar\nu\bar\gamma}
 -\pi^{\bar\tau\bar\sigma}\Gamma_{\bar\sigma\nu}\,\mathcal V_{\bar\mu\bar\tau\bar\gamma}-\pi^{\bar\tau\bar\sigma}\Gamma_{\bar\sigma\bar\gamma}\,\mathcal V_{\bar\mu\bar\nu\bar\tau}
 -i\pi^{\bar\sigma}_{\tau}\,\phi^{\tau}\,\mathcal S_{\bar\mu\bar\nu\bar\gamma\bar\sigma}-\frac{1}{2}\big(3\varphi_0-i\varphi_1\big)\, \mathcal V_{\bar\mu\bar\nu\bar\gamma}\\
 +\frac{1}{2}\big(\varphi_2+i\varphi_3\big)\, (\mathfrak j\mathcal V)_{\bar\mu\bar\nu\bar\gamma}
 +2\big(\pi_{\bar\mu\bar\tau}\,\mathcal M_{\bar\nu\bar\gamma}+\pi_{\bar\nu\bar\tau}\,\mathcal M_{\bar\mu\bar\gamma}+\pi_{\bar\gamma\bar\tau}\,\mathcal M_{\bar\mu\bar\nu}\big)\theta^{\bar\tau}\\
 -2\big(g_{\tau\bar\mu}\, \mathcal L_{\bar\nu\bar\gamma}
 +g_{\tau\bar\nu}\,\mathcal L_{\bar\mu\bar\gamma}+g_{\tau\bar\gamma}\,\mathcal L_{\bar\mu\bar\nu}\big)\theta^{\tau}\bigg]\wedge\theta^{\bar\gamma}\wedge\eta_1 
 \end{multline*}
 \begin{multline*}
 \qquad\qquad-\ i\pi^{\sigma}_{\bar\gamma}\,\bigg[d{\mathcal V}_{\alpha\beta\sigma}-\pi^{\tau\nu}\Gamma_{\nu\alpha}\,\mathcal V_{\tau\beta\sigma}
 -\pi^{\tau\nu}\Gamma_{\nu\beta}\,\mathcal V_{\alpha\tau\sigma}-\pi^{\tau\nu}\Gamma_{\nu\sigma}\,\mathcal V_{\alpha\beta\tau}
 +i\pi^{\nu}_{\bar\tau}\,\phi^{\bar\tau}\,\mathcal S_{\alpha\beta\sigma\nu}-\frac{1}{2}\big(3\varphi_0+i\varphi_1\big)\, \mathcal V_{\alpha\beta\sigma}\\
 +\frac{1}{2}\big(\varphi_2-i\varphi_3\big)\, (\mathfrak j\mathcal V)_{\alpha\beta\sigma}
 +2\big(\pi_{\alpha\tau}\,\mathcal M_{\beta\sigma}+\pi_{\beta\tau}\,\mathcal M_{\alpha\sigma}+\pi_{\sigma\tau}\,\mathcal M_{\alpha\beta}\big)\theta^{\tau}\\
 +2\big(g_{\alpha\bar\tau}\, \mathcal L_{\beta\sigma}
 +g_{\beta\bar\tau}\,\mathcal L_{\alpha\sigma}+g_{\sigma\bar\tau}\,\mathcal L_{\alpha\beta}\big)\theta^{\bar\tau}\bigg]\wedge\theta^{\bar\gamma}\wedge\big(\eta_2+i\eta_3\big) 
 \end{multline*}
\begin{multline*}
 \qquad\qquad+\ i\pi^{\bar\mu}_\alpha\,\pi^{\bar\nu}_\beta\,\pi^{\bar\xi}_\gamma\,\bigg[d{\mathcal V}_{\bar\mu\bar\nu\bar\xi}-\pi^{\bar\tau\bar\sigma}\Gamma_{\bar\sigma\bar\mu}\,\mathcal V_{\bar\tau\bar\nu\bar\xi}
 -\pi^{\bar\tau\bar\sigma}\Gamma_{\bar\sigma\nu}\,\mathcal V_{\bar\mu\bar\tau\bar\xi}-\pi^{\bar\tau\bar\sigma}\Gamma_{\bar\sigma\bar\xi}\,\mathcal V_{\bar\mu\bar\nu\bar\tau}
 -i\pi^{\bar\sigma}_{\tau}\,\phi^{\tau}\,\mathcal S_{\bar\mu\bar\nu\bar\xi\bar\sigma}-\frac{1}{2}\big(3\varphi_0-i\varphi_1\big)\, \mathcal V_{\bar\mu\bar\nu\bar\xi}\\
 +\frac{1}{2}\big(\varphi_2+i\varphi_3\big)\, (\mathfrak j\mathcal V)_{\bar\mu\bar\nu\bar\xi}
 +2\big(\pi_{\bar\mu\bar\tau}\,\mathcal M_{\bar\nu\bar\xi}+\pi_{\bar\nu\bar\tau}\,\mathcal M_{\bar\mu\bar\xi}+\pi_{\bar\xi\bar\tau}\,\mathcal M_{\bar\mu\bar\nu}\big)\theta^{\bar\tau}\\
 -2\big(g_{\tau\bar\mu}\, \mathcal L_{\bar\nu\bar\xi}
 +g_{\tau\bar\nu}\,\mathcal L_{\bar\mu\bar\xi}+g_{\tau\bar\xi}\,\mathcal L_{\bar\mu\bar\nu}\big)\theta^{\tau}\bigg]\wedge\theta^{\gamma}\wedge\big(\eta_2-i\eta_3\big) 
 \end{multline*} 
 \begin{multline*}
 \qquad\qquad-i\bigg[d\mathcal L_{\alpha\beta}-\pi^{\tau\sigma}\Gamma_{\sigma\alpha}\,\mathcal L_{\tau\beta}-\pi^{\tau\sigma}\Gamma_{\sigma\beta}\,\mathcal L_{\alpha\tau}
-2\varphi_0\,\mathcal L_{\alpha\beta}
-\frac{1}{2}\big(\varphi_2+i\varphi_3\big)\mathcal M_{\alpha\beta}
 -\frac{1}{2}\big(\varphi_2-i\varphi_3\big)(\mathfrak j\mathcal M)_{\alpha\beta}\\
  -\phi^\sigma\,\mathcal V_{\alpha\beta\sigma}-\pi^{\bar\mu}_\alpha\,\pi^{\bar\nu}_\beta\,\phi^{\bar\sigma}\,\mathcal V_{\bar\mu\bar\nu\bar\sigma}
  -2i\big(\pi_{\alpha\tau}\mathcal C_\beta+\pi_{\beta\tau}\mathcal C_\alpha\big)\theta^\tau\\
 -2i\big(g_{\alpha\bar\tau}\,\pi^{\bar\sigma}_{\beta}\,\mathcal C_{\bar\sigma}+g_{\beta\bar\tau}\,\pi^{\bar\sigma}_{\alpha}\,\mathcal C_{\bar\sigma}\big)\theta^{\bar\tau}\bigg]\wedge\big(\eta_2+i\eta_3\big)\wedge\big(\eta_2-i\eta_3\big) 
 \end{multline*} 
  \begin{multline*}
 \qquad\qquad+\bigg[d\mathcal M_{\alpha\beta}-\pi^{\tau\sigma}\Gamma_{\sigma\alpha}\,\mathcal M_{\tau\beta}-\pi^{\tau\sigma}\Gamma_{\sigma\beta}\,\mathcal M_{\alpha\tau}
 -\big(2\varphi_0+i\varphi_1\big)\,\mathcal M_{\alpha\beta}+\big(\varphi_2-i\varphi_3\big)\mathcal L_{\alpha\beta}
 +2\pi^{\sigma}_{\bar\tau}\,\phi^{\bar\tau}\,\mathcal V_{\alpha\beta\sigma}\\
 +2\big(\pi_{\alpha\tau}\mathcal H_\beta+\pi_{\beta\tau}\mathcal H_\alpha\big)\theta^\tau
 -2i\big(g_{\alpha\bar\tau}\,\mathcal C_{\beta}+g_{\beta\bar\tau}\,\mathcal C_{\alpha}\big)\theta^{\bar\tau}\bigg]\wedge\eta_1\wedge\big(\eta_2+i\eta_3\big) 
 \end{multline*} 
 \begin{multline*}
 \qquad\qquad+\pi^{\bar\mu}_\alpha\,\pi^{\bar\nu}_\beta\bigg[d\mathcal M_{\bar\mu\bar\nu}-\pi^{\bar\tau\bar\sigma}\Gamma_{\bar\sigma\bar\mu}\,\mathcal M_{\bar\tau\bar\nu}-\pi^{\bar\tau\bar\sigma}\Gamma_{\bar\sigma\bar\nu}\,\mathcal M_{\bar\mu\bar\tau}
 -\big(2\varphi_0-i\varphi_1\big)\,\mathcal M_{\bar\mu\bar\nu}+\big(\varphi_2+i\varphi_3\big)\mathcal L_{\bar\mu\bar\nu}
 +2\pi^{\bar\sigma}_\tau\,\phi^\tau\,\mathcal V_{\bar\mu\bar\nu\bar\sigma}\\
 +2\big(\pi_{\bar\mu\bar\tau}\mathcal H_{\bar\nu}+\pi_{\bar\nu\bar\tau}\mathcal H_{\bar\mu}\big)\theta^{\bar\tau}
 +2i\big(g_{\tau\bar\mu}\,\mathcal C_{\bar\nu}+g_{\tau\bar\nu}\,\mathcal C_{\bar\mu}\big)\theta^{\tau}\bigg]\wedge\eta_1\wedge\big(\eta_2-i\eta_3\big) 
 \end{multline*} 
and

\begin{multline}\label{d2phi_a1}
 0\ =\ d^2\phi^\alpha\  = \ \frac{i}{2}\bigg[d{\psi_1}-\varphi_0\wedge\psi_1+\varphi_2\wedge\psi_3-\varphi_3\wedge\psi_2+4i\phi_\beta\wedge\phi^\beta
 -4\pi^\sigma_{\bar\gamma}\,\mathcal L_{\beta\sigma}\,\theta^\beta\wedge\theta^{\bar\gamma}\\
  -4\big(\mathcal C_\beta\,\theta^\beta+\mathcal C_{\bar\beta}\,\theta^{\bar\beta}\big)\wedge\eta_1 +4i\pi^{\sigma}_{\bar\beta}\,\mathcal C_{\sigma}\,\theta^{\bar\beta}\wedge\big(\eta_2+i\eta_3\big)
  -4i\pi^{\bar\sigma}_\beta\,\mathcal C_{\bar\sigma}\,\theta^\beta\wedge\big(\eta_2-i\eta_3\big)
  \bigg]\wedge\theta^\alpha 
 \end{multline}
\begin{multline*}
 \qquad\qquad+\ \frac{1}{2}\pi^{\alpha}_{\bar\delta}\bigg[d\big(\psi_2+i\psi_3\big)-\big(\varphi_0-i\varphi_1\big)\wedge\big(\psi_2+i\psi_3\big)-i\big(\varphi_2+i\varphi_3\big)\wedge\psi_1
 -4\pi_{\beta\gamma}\,\phi^\beta\wedge\phi^\gamma
 -4i\pi^{\bar\sigma}_{\beta}\,\mathcal M_{\bar\sigma\bar\gamma}\,\theta^\beta\wedge\theta^{\bar\gamma}\\
  +4\big(- i\pi^{\bar\sigma}_\beta\,\mathcal C_{\bar\sigma}\,\theta^\beta + \mathcal H_{\bar\beta}\,\theta^{\bar\beta}\big)\wedge\eta_1
   +4\mathcal C_{\bar\beta}\,\theta^{\bar\beta}\wedge\big(\eta_2+i\eta_3\big)
  +4i\pi^{\bar\sigma}_\beta\,\mathcal H_{\bar\sigma}\,\theta^\beta\wedge\big(\eta_2-i\eta_3\big)   \bigg]\wedge\theta^{\bar\delta} 
 \end{multline*}
 \begin{multline*}
\qquad\qquad-\ ig^{\alpha\bar\mu}\,\pi^{\bar\nu}_\beta\,\bigg[d{\mathcal V}_{\bar\mu\bar\nu\bar\gamma}-\pi^{\bar\tau\bar\sigma}\Gamma_{\bar\sigma\bar\mu}\,\mathcal V_{\bar\tau\bar\nu\bar\gamma}
 -\pi^{\bar\tau\bar\sigma}\Gamma_{\bar\sigma\nu}\,\mathcal V_{\bar\mu\bar\tau\bar\gamma}-\pi^{\bar\tau\bar\sigma}\Gamma_{\bar\sigma\bar\gamma}\,\mathcal V_{\bar\mu\bar\nu\bar\tau}
 -i\pi^{\bar\sigma}_{\tau}\,\phi^{\tau}\,\mathcal S_{\bar\mu\bar\nu\bar\gamma\bar\sigma}-\frac{1}{2}\big(3\varphi_0-i\varphi_1\big)\, \mathcal V_{\bar\mu\bar\nu\bar\gamma}\\
 +\frac{1}{2}\big(\varphi_2+i\varphi_3\big)\, (\mathfrak j\mathcal V)_{\bar\mu\bar\nu\bar\gamma}
 +2\big(\pi_{\bar\mu\bar\tau}\,\mathcal M_{\bar\nu\bar\gamma}+\pi_{\bar\nu\bar\tau}\,\mathcal M_{\bar\mu\bar\gamma}+\pi_{\bar\gamma\bar\tau}\,\mathcal M_{\bar\mu\bar\nu}\big)\theta^{\bar\tau}\\
 -2\big(g_{\tau\bar\mu}\, \mathcal L_{\bar\nu\bar\gamma}
 +g_{\tau\bar\nu}\,\mathcal L_{\bar\mu\bar\gamma}+g_{\tau\bar\gamma}\,\mathcal L_{\bar\mu\bar\nu}\big)\theta^{\tau}\bigg]\wedge\theta^{\beta}\wedge\theta^{\bar\gamma} 
 \end{multline*}
\begin{multline*}
 \qquad\qquad+\pi^{\alpha\mu}\,\bigg[d\mathcal L_{\mu\beta}-\pi^{\tau\sigma}\Gamma_{\sigma\mu}\,\mathcal L_{\tau\beta}-\pi^{\tau\sigma}\Gamma_{\sigma\beta}\,\mathcal L_{\mu\tau}
-2\varphi_0\,\mathcal L_{\mu\beta}
-\frac{1}{2}\mathcal M_{\mu\beta}\big(\varphi_2+i\varphi_3\big)
 -\frac{1}{2}(\mathfrak j\mathcal M)_{\mu\beta}\big(\varphi_2-i\varphi_3\big)\\
  -\phi^\sigma\,\mathcal V_{\mu\beta\sigma}-\pi^{\bar\tau}_\mu\,\pi^{\bar\nu}_\beta\,\phi^{\bar\sigma}\,\mathcal V_{\bar\tau\bar\nu\bar\sigma}
  -2i\big(\pi_{\mu\tau}\mathcal C_\beta+\pi_{\beta\tau}\mathcal C_\mu\big)\theta^\tau\\
 -2i\big(g_{\mu\bar\tau}\,\pi^{\bar\sigma}_{\beta}\,\mathcal C_{\bar\sigma}+g_{\beta\bar\tau}\,\pi^{\bar\sigma}_{\mu}\,\mathcal C_{\bar\sigma}\big)\theta^{\bar\tau}\bigg]\wedge\theta^{\beta}\wedge\eta_1
 \end{multline*} 
 \begin{multline*}
 \qquad\qquad+\bigg[d\mathcal M^\alpha_{\bar\beta}+\pi^{\alpha\sigma}\Gamma_{\sigma\tau}\,\mathcal M^\tau_{\bar\beta}-\pi^{\bar\tau\bar\sigma}\Gamma_{\bar\sigma\bar\beta}\,\mathcal M^\alpha_{\bar\tau}
 -\big(2\varphi_0-i\varphi_1\big)\,\mathcal M^\alpha_{\bar\beta}+\big(\varphi_2+i\varphi_3\big)\mathcal L^\alpha_{\bar\beta}
 +2\pi^{\bar\sigma}_{\tau}\,\phi^{\tau}\,\mathcal V^\alpha_{\dt\bar\beta\bar\sigma}\\
 +2\big(\pi^{\alpha}_{\bar\tau}\mathcal H_{\bar\beta}+\pi_{\bar\beta\bar\tau}\mathcal H^\alpha\big)\theta^{\bar\tau}
 +2i\big(\delta^\alpha_{\tau}\,\mathcal C_{\bar\beta}+g_{\tau\bar\beta}\,\mathcal C^{\alpha}\big)\theta^{\tau}\bigg]\wedge\theta^{\bar\beta}\wedge\eta_1
 \end{multline*} 
  \begin{multline*}
 \qquad\qquad+i\pi^{\bar\nu}_\beta\,\bigg[d\mathcal M^\alpha_{\bar\nu}+\pi^{\alpha\sigma}\Gamma_{\sigma\tau}\,\mathcal M^\tau_{\bar\nu}-\pi^{\bar\tau\bar\sigma}\Gamma_{\bar\sigma\bar\nu}\,\mathcal M^\alpha_{\bar\tau}
 -\big(2\varphi_0-i\varphi_1\big)\,\mathcal M^\alpha_{\bar\nu}+\big(\varphi_2+i\varphi_3\big)\mathcal L^\alpha_{\bar\nu}
 +2\pi^{\bar\sigma}_{\tau}\,\phi^{\tau}\,\mathcal V^\alpha_{\dt\bar\nu\bar\sigma}\\
 +2\big(\pi^{\alpha}_{\bar\tau}\mathcal H_{\bar\nu}+\pi_{\bar\nu\bar\tau}\mathcal H^\alpha\big)\theta^{\bar\tau}
 +2i\big(\delta^\alpha_{\tau}\,\mathcal C_{\bar\nu}+g_{\tau\bar\nu}\,\mathcal C^{\alpha}\big)\theta^{\tau}\bigg]\wedge\theta^\beta\wedge\big(\eta_2-i\eta_3\big) 
 \end{multline*}
\begin{multline*}
 \qquad\qquad-i\bigg[d\mathcal L^\alpha_{\bar\beta}+\pi^{\alpha\sigma}\Gamma_{\sigma\tau}\,\mathcal L^\tau_{\bar\beta}-\pi^{\bar\tau\bar\sigma}\Gamma_{\bar\sigma\bar\beta}\,\mathcal L^\alpha_{\bar\tau}
-2\varphi_0\,\mathcal L^\alpha_{\bar\beta}
-\frac{1}{2}\big(\varphi_2-i\varphi_3\big)\mathcal M^\alpha_{\bar\beta}
 -\frac{1}{2}\big(\varphi_2+i\varphi_3\big)(\mathfrak j\mathcal M)^\alpha_{\bar\beta}\\
  -\phi^{\bar\sigma}\,\mathcal V^\alpha_{\dt\bar\beta\bar\sigma}-\pi^{\alpha\mu}\,\pi^{\nu}_{\bar\beta}\,\phi^{\sigma}\,\mathcal V_{\mu\nu\sigma}
  +2i\big(-\pi^\alpha_{\bar\tau}\mathcal C_{\bar\beta}+\pi_{\bar\beta\bar\tau}\mathcal C^\alpha\big)\theta^{\bar\tau}\\
 +2i\big(\delta^\alpha_{\tau}\,\pi^{\sigma}_{\bar\beta}\,\mathcal C_{\sigma}+g_{\tau\bar\beta}\,\pi^{\alpha\sigma}\,\mathcal C_{\sigma}\big)\theta^{\tau}\bigg]\wedge\theta^{\bar\beta}\wedge\big(\eta_2+i\eta_3\big) 
 \end{multline*} 
\begin{multline*}
 \qquad\qquad+\bigg[d\mathcal C^\alpha+\pi^{\alpha\sigma}\Gamma_{\sigma\tau}\,\mathcal C^\tau 
 -\frac{1}{2}\big(5\varphi_0-i\varphi_1\big)\,\mathcal C^\alpha-\pi^\alpha_{\bar\sigma}\big(\varphi_2+i\varphi_3\big)\mathcal C^{\bar\sigma}
+ 2i\pi^{\alpha\sigma}\,\phi^\tau\,\mathcal L_{\sigma\tau} + i\phi^{\bar\tau}\,\mathcal M^\alpha_{\bar\tau} \\
+\frac{i}{2} \big(\varphi_2-i\varphi_3\big)\mathcal H^{\alpha}
 \bigg]\wedge\big(\eta_2+i\eta_3\big)\wedge\big(\eta_2-i\eta_3\big) 
 \end{multline*}
 \begin{multline*}
 \qquad\qquad+i\pi^{\alpha}_{\bar\mu}\,\bigg[d\mathcal C^{\bar\mu}+\pi^{\bar\mu\bar\sigma}\Gamma_{\bar\sigma\bar\tau}\,\mathcal C^{\bar\tau} 
 -\frac{1}{2}\big(5\varphi_0+i\varphi_1\big)\,\mathcal C^{\bar\mu}-\pi^{\bar\mu}_{\sigma}\big(\varphi_2-i\varphi_3\big)\mathcal C^{\sigma}
- 2i\pi^{\bar\mu\bar\sigma}\,\phi^{\bar\tau}\,\mathcal L_{\bar\sigma\bar\tau} - i\phi^{\tau}\,\mathcal M^{\bar\mu}_{\tau} \\
-\frac{i}{2} \big(\varphi_2+i\varphi_3\big)\mathcal H^{\bar\mu}
 \bigg]\wedge\eta_1\wedge\big(\eta_2+i\eta_3\big) 
 \end{multline*}
 \begin{multline*}
 \qquad\qquad+\bigg[d\mathcal H^\alpha+\pi^{\alpha\sigma}\Gamma_{\sigma\tau}\,\mathcal H^\tau 
 -\frac{1}{2}\big(5\varphi_0-3i\varphi_1\big)\,\mathcal H^\alpha+\frac{3i}{2}\big(\varphi_2+i\varphi_3\big)\mathcal C^{\alpha}
-3\pi^{\alpha\sigma}\,\phi^\tau\,(\mathfrak j\mathcal M)_{\sigma\tau}
 \bigg]\wedge\eta_1\wedge\big(\eta_2-i\eta_3\big) 
 \end{multline*}
 
Using  \eqref{S*} - \eqref{H*}, by \eqref{d2Gamma_ab1} we obtain the first of the Bianchi identities \eqref{d2Gamma_ab}, whereas  \eqref{d2phi_a1} reads as 
\begin{multline}\label{d2phi_a2}
 0\ =\ d^2\phi_\alpha\  = \ -\frac{i}{2}g_{\alpha\bar\beta}\,\Psi\wedge\theta^{\bar\beta} 
-\frac{1}{2}\pi_{\alpha\beta}\overline{\Phi}\wedge\theta^{\beta} 
 -\ i\pi^{\nu}_{\bar\gamma}\,{\mathcal V}^\ast_{\alpha\beta\nu}\wedge\theta^{\beta}\wedge\theta^{\bar\gamma} 
+\pi^{\bar\mu}_\alpha\,\mathcal L_{\bar\mu\bar\beta}^\ast\wedge\theta^{\bar\beta}\wedge\eta_1\\
 +\mathcal M^\ast_{\alpha\beta}\wedge\theta^{\beta}\wedge\eta_1
-i\pi^{\nu}_{\bar\beta}\,\mathcal M^\ast_{\alpha\nu}\wedge\theta^{\bar\beta}\wedge\big(\eta_2+i\eta_3\big) 
+i\mathcal L^\ast_{\alpha\beta}\wedge\theta^{\beta}\wedge\big(\eta_2-i\eta_3\big) \\
-\mathcal C'_\alpha
 \wedge\big(\eta_2+i\eta_3\big)\wedge\big(\eta_2-i\eta_3\big) 
+i\pi_{\alpha}^{\bar\mu}\,\mathcal C'_{\bar\mu}\wedge\eta_1\wedge\big(\eta_2-i\eta_3\big) 
 +\mathcal H'_\alpha
 \wedge\eta_1\wedge\big(\eta_2+i\eta_3\big) \ =\ 0,
 \end{multline}
 with $\Psi$, $\Phi$ being  two-forms defined by
\begin{multline}\label{Psi}
\Psi\overset{def}{\ =\ }d{\psi_1}-\varphi_0\wedge\psi_1+\varphi_2\wedge\psi_3-\varphi_3\wedge\psi_2+4i\phi_\beta\wedge\phi^\beta
 -4\pi^\sigma_{\bar\gamma}\,\mathcal L_{\beta\sigma}\,\theta^\beta\wedge\theta^{\bar\gamma}
  -4\big(\mathcal C_\beta\,\theta^\beta+\mathcal C_{\bar\beta}\,\theta^{\bar\beta}\big)\wedge\eta_1\\
   +4i\pi^{\sigma}_{\bar\beta}\,\mathcal C_{\sigma}\,\theta^{\bar\beta}\wedge\big(\eta_2+i\eta_3\big)
  -4i\pi^{\bar\sigma}_\beta\,\mathcal C_{\bar\sigma}\,\theta^\beta\wedge\big(\eta_2-i\eta_3\big),
\end{multline}
 \begin{multline}\label{Phi}
\Phi\overset{def}{\ =\ } d\big(\psi_2+i\psi_3\big)-\big(\varphi_0-i\varphi_1\big)\wedge\big(\psi_2+i\psi_3\big)-i\big(\varphi_2+i\varphi_3\big)\wedge\psi_1
 -4\pi_{\beta\gamma}\,\phi^\beta\wedge\phi^\gamma\\
 -4i\pi^{\bar\sigma}_{\beta}\,\mathcal M_{\bar\sigma\bar\gamma}\,\theta^\beta\wedge\theta^{\bar\gamma}
  +4\big(- i\pi^{\bar\sigma}_\beta\,\mathcal C_{\bar\sigma}\,\theta^\beta + \mathcal H_{\bar\beta}\,\theta^{\bar\beta}\big)\wedge\eta_1
   +4\mathcal C_{\bar\beta}\,\theta^{\bar\beta}\wedge\big(\eta_2+i\eta_3\big)\\
  +4i\pi^{\bar\sigma}_\beta\,\mathcal H_{\bar\sigma}\,\theta^\beta\wedge\big(\eta_2-i\eta_3\big),
 \end{multline} 
 and $\mathcal C'_\alpha$, $\mathcal H'_\alpha$ being one-forms given by
  \begin{multline}\label{C'}
\mathcal C'_{\alpha}\overset{def}{\ =\ }d\mathcal C_\alpha-\pi^{\tau\sigma}\Gamma_{\sigma\alpha}\,\mathcal C_\tau 
 -\frac{1}{2}\big(5\varphi_0+i\varphi_1\big)\,\mathcal C_\alpha+\pi^{\bar\sigma}_{\alpha}\big(\varphi_2-i\varphi_3\big)\mathcal C_{\bar\sigma}
+ 2i\pi^{\sigma}_{\bar\tau}\,\phi^{\bar\tau}\,\mathcal L_{\alpha\sigma} - i\phi^{\tau}\,\mathcal M_{\alpha\tau} \\
-\frac{i}{2} \big(\varphi_2+i\varphi_3\big)\mathcal H_{\alpha},
 \end{multline}
 \begin{multline}\label{H'}
\mathcal H'_{\alpha}\overset{def}{\ =\ }d\mathcal H_\alpha-\pi^{\tau\sigma}\Gamma_{\sigma\alpha}\,\mathcal H_\tau 
 -\frac{1}{2}\big(5\varphi_0+3i\varphi_1\big)\,\mathcal H_\alpha-\frac{3i}{2}\big(\varphi_2-i\varphi_3\big)\mathcal C_{\alpha}
+3\pi^{\sigma}_{\bar\tau}\,\phi^{\bar\tau}\,\mathcal M_{\alpha\sigma}.\\
 \end{multline}
 
 An immediate consequence of \eqref{d2Gamma_ab} and \eqref{d2phi_a2} is that
 \begin{equation}\label{S*_to_H*}
 \begin{cases}
 \mathcal S^\ast_{\alpha\beta\gamma\delta}=\mathcal S_{\alpha\beta\gamma\delta,\epsilon}\,\theta^{\epsilon}+ \mathcal S_{\alpha\beta\gamma\delta,\bar\epsilon}\,\theta^{\bar\epsilon}
 +(\mathcal S_1)_{\alpha\beta\gamma\delta}\,\eta_1 + (\mathcal S_2)_{\alpha\beta\gamma\delta}\big(\eta_2+i\eta_3\big) +
(\mathcal S_3)_{\alpha\beta\gamma\delta}\big(\eta_2-i\eta_3\big)\\
 \mathcal V^\ast_{\alpha\beta\gamma}=\mathcal V_{\alpha\beta\gamma,\epsilon}\,\theta^{\epsilon}+\mathcal V_{\alpha\beta\gamma,\bar\epsilon}\,\theta^{\bar\epsilon}
 +(\mathcal V_1)_{\alpha\beta\gamma}\,\eta_1 + (\mathcal V_2)_{\alpha\beta\gamma}\big(\eta_2+i\eta_3\big)+(\mathcal V_3)_{\alpha\beta\gamma}\big(\eta_2-i\eta_3\big)\\
 \mathcal L^\ast_{\alpha\beta}=\mathcal L_{\alpha\beta,\epsilon}\,\theta^{\epsilon}+\mathcal L_{\alpha\beta,\bar\epsilon}\,\theta^{\bar\epsilon}
 +(\mathcal L_1)_{\alpha\beta}\,\eta_1 + (\mathcal L_2)_{\alpha\beta}\big(\eta_2+i\eta_3\big)+(\mathcal L_3)_{\alpha\beta}\big(\eta_2-i\eta_3\big)\\
  \mathcal M^\ast_{\alpha\beta}=\ \mathcal M_{\alpha\beta,\epsilon}\,\theta^{\epsilon}+\mathcal M_{\alpha\beta,\bar\epsilon}\,\theta^{\bar\epsilon}
 +(\mathcal M_1)_{\alpha\beta}\,\eta_1 + (\mathcal M_2)_{\alpha\beta}\big(\eta_2+i\eta_3\big)+(\mathcal M_3)_{\alpha\beta}\big(\eta_2-i\eta_3\big)\\
  \mathcal C'_{\alpha}=\mathcal C_{\alpha,\epsilon}\,\theta^{\epsilon}+\mathcal C_{\alpha,\bar\epsilon}\,\theta^{\bar\epsilon}
 +(\mathcal C_1)_{\alpha}\,\eta_1 + (\mathcal C_2)_{\alpha}\big(\eta_2+i\eta_3\big)+(\mathcal C_3)_{\alpha}\big(\eta_2-i\eta_3\big)\\
  \mathcal H'_{\alpha}=\mathcal H_{\alpha,\epsilon}\,\theta^{\epsilon}+\mathcal H_{\alpha,\bar\epsilon}\theta^{\bar\epsilon}
 +(\mathcal H_1)_{\alpha}\,\eta_1 + (\mathcal H_2)_{\alpha}\big(\eta_2+i\eta_3\big)+(\mathcal H_3)_{\alpha}\big(\eta_2-i\eta_3\big).
\end{cases}
 \end{equation}
for some appropriate coefficients $\mathcal S_{\alpha\beta\gamma\delta,\epsilon}$,  $\mathcal S_{\alpha\beta\gamma\delta,\bar\epsilon}$,  $(\mathcal S_s)_{\alpha\beta\gamma\delta}$,
$\mathcal V_{\alpha\beta\gamma,\epsilon}$,  $\mathcal V_{\alpha\beta\gamma,\bar\epsilon}$,  $(\mathcal V_s)_{\alpha\beta\gamma}$, $\mathcal L_{\alpha\beta,\epsilon}$,  $\mathcal L_{\alpha\beta,\bar\epsilon}$,  $(\mathcal L_s)_{\alpha\beta}$, $\mathcal M_{\alpha\beta,\epsilon}$,  $\mathcal M_{\alpha\beta,\bar\epsilon}$,  $(\mathcal M_s)_{\alpha\beta}$, $\mathcal C_{\alpha,\epsilon}$,  $\mathcal C_{\alpha,\bar\epsilon}$,  $(\mathcal C_s)_{\alpha}$, $\mathcal H_{\alpha,\epsilon}$,  $\mathcal H_{\alpha,\bar\epsilon}$,  $(\mathcal H_s)_{\alpha}$. 

Substituting \eqref{S*_to_H*} back into \eqref{d2Gamma_ab}, we consider only the terms involving $\theta^{\epsilon}\wedge\theta^\gamma\wedge\theta^{\bar\delta}$  and  $\theta^{\bar\epsilon}\wedge\theta^\gamma\wedge\theta^{\bar\delta}$. Then, 
\begin{equation*}
\pi^{\sigma}_{\bar\delta}\Big(\mathcal S_{\alpha\beta\gamma\sigma,\epsilon}\,\theta^\epsilon+\mathcal S_{\alpha\beta\gamma\sigma,\bar\epsilon}\,\theta^{\bar\epsilon}\Big)\wedge\theta^\gamma\wedge\theta^{\bar\delta}\ =\ 0
\end{equation*}
and it follows that the array $\{\mathcal S_{\alpha\beta\gamma\sigma,\epsilon}\}$ must be totally symmetric. By the first line of \eqref{properties-curvature} (which we have already proved), 
\begin{equation*}
\mathcal S^\ast_{\alpha\beta\gamma\delta}=\pi^{\bar\mu}_\alpha\,\pi^{\bar\nu}_\beta\,\pi^{\bar\sigma}_\gamma\,\pi^{\bar\tau}_\delta\,\mathcal S^\ast_{\bar\mu\bar\nu\bar\sigma\bar\tau}
\end{equation*}
and therefore,
\begin{equation*}
\mathcal S_{\alpha\beta\gamma\delta,\bar\epsilon}=\pi^{\bar\mu}_\alpha\,\pi^{\bar\nu}_\beta\,\pi^{\bar\sigma}_\gamma\,\pi^{\bar\tau}_\delta\,\mathcal S_{\bar\mu\bar\nu\bar\sigma\bar\tau,\bar\epsilon}.
\end{equation*}
Hence, defining
\begin{equation}
\mathcal A_{\alpha\beta\gamma\delta\epsilon}\overset{def}{=}\mathcal S_{\alpha\beta\gamma\delta,\epsilon},
\end{equation}
we obtain
\begin{equation}
\mathcal S_{\alpha\beta\gamma\delta,\epsilon}=-\pi^{\sigma}_{\bar\epsilon}(\mathfrak j\mathcal A)_{\alpha\beta\gamma\delta\sigma}.
\end{equation}

The vanishing of the coefficients of $\theta^{\gamma}\wedge\theta^{\bar\delta}\wedge\eta_1$, $\theta^{\gamma}\wedge\theta^{\bar\delta}\wedge(\eta_2+i\eta_3)$ and $\theta^{\gamma}\wedge\theta^{\bar\delta}\wedge(\eta_2-i\eta_3)$ (after substituting  \eqref{S*_to_H*} into \eqref{d2Gamma_ab}) yields
\begin{equation*}
\begin{split}
&\pi^\sigma_{\bar\delta}(\mathcal S_1)_{\alpha\beta\gamma\sigma}-\mathcal V_{\alpha\beta\gamma,\bar\delta}+\pi^{\bar\mu}_\alpha\,\pi^{\bar\nu}_\beta\,\mathcal V_{\bar\mu\bar\nu\bar\delta,\gamma}\ =\ 0\\
&\pi^\sigma_{\bar\delta}(\mathcal S_2)_{\alpha\beta\gamma\sigma}-i\pi^{\sigma}_{\bar\delta}\mathcal V_{\alpha\beta\sigma,\gamma}\ =\ 0\\
&\pi^\sigma_{\bar\delta}(\mathcal S_3)_{\alpha\beta\gamma\sigma}-i\pi^{\bar\mu}_\alpha\,\pi^{\bar\nu}_\beta\,\pi^{\bar\tau}_\gamma\,\mathcal V_{\bar\mu\bar\nu\bar\tau,\bar\delta}\ =\ 0.
\end{split}
\end{equation*}
Therefore, if we define
\begin{equation*}
\begin{cases}
\mathcal B_{\alpha\beta\gamma\delta}\overset{def}{=} -\pi^{\bar\sigma}_\delta\,\mathcal V_{\alpha\beta\gamma,\bar\sigma}\\
\mathcal C_{\alpha\beta\gamma\delta}\overset{def}{=}-i(\mathcal S_2)_{\alpha\beta\gamma\delta},
\end{cases}
\end{equation*}
we obtain that the arrays $\{\mathcal B_{\alpha\beta\gamma\delta}\}$ and $\{\mathcal C_{\alpha\beta\gamma\delta}\}$ are totally symmetric and 
\begin{gather}
(\mathcal S_1)_{\alpha\beta\gamma\delta}=B_{\alpha\beta\gamma\delta}+(\mathfrak j \mathcal B)_{\alpha\beta\gamma\delta},\qquad
(\mathcal S_2)_{\alpha\beta\gamma\delta}=i\mathcal C_{\alpha\beta\gamma\delta},\qquad
(\mathcal S_3)_{\alpha\beta\gamma\delta}=-i(\mathfrak j \mathcal C)_{\alpha\beta\gamma\delta},\\\nonumber
\mathcal V_{\alpha\beta\gamma,\epsilon}=\mathcal C_{\alpha\beta\gamma\epsilon},\qquad \mathcal V_{\alpha\beta\gamma,\bar\epsilon}=\pi^{\delta}_{\bar\epsilon}\,\mathcal B_{\alpha\beta\gamma\delta}.
\end{gather}

Similarly, the coefficients of $\theta^{\gamma}\wedge\eta_1\wedge(\eta_2+i\eta_3)$,  $\theta^{\bar\gamma}\wedge\eta_1\wedge(\eta_2+i\eta_3)$ and $\theta^{\gamma}\wedge(\eta_2+i\eta_3)\wedge(\eta_2-i\eta_3)$ give the equations
\begin{equation*}
\begin{split}
&(\mathcal V_2)_{\alpha\beta\gamma}+\mathcal M_{\alpha\beta,\gamma}\ =\ 0\\
&i\pi^\sigma_{\bar\gamma}(\mathcal V_1)_{\alpha\beta\sigma}+\mathcal M_{\alpha\beta,\bar\gamma}
+\pi^{\bar\mu}_\alpha\,\pi^{\bar\nu}_\beta(\mathcal V_3)_{\bar\mu\bar\nu\bar\gamma}\ =\ 0\\
&i\pi^{\bar\mu}_\alpha\,\pi^{\bar\nu}_\beta\,\pi^{\bar\tau}_\gamma(\mathcal V_3)_{\bar\mu\bar\nu\bar\tau}+i\mathcal L_{\alpha\beta,\gamma}\ =\ 0.
\end{split}
\end{equation*}
Defining 
\begin{equation*}
\begin{cases}
\mathcal D_{\alpha\beta\gamma}\overset{def}{=} (\mathcal V_1)_{\alpha\beta\gamma}\\
\mathcal E_{\alpha\beta\gamma}\overset{def}{=} (\mathcal V_2)_{\alpha\beta\gamma}\\
\mathcal F_{\alpha\beta\gamma}\overset{def}{=} (\mathcal V_3)_{\alpha\beta\gamma},
\end{cases}
\end{equation*}
we deduce that
\begin{gather}
\mathcal L_{\alpha\beta,\gamma}=-(\mathfrak j\mathcal F)_{\alpha\beta\gamma},\qquad \mathcal L_{\alpha\beta,\bar\gamma}=-\pi^{\sigma}_{\bar\gamma}\,\mathcal F_{\alpha\beta\sigma},
\\\nonumber
\mathcal M_{\alpha\beta,\gamma}=-\mathcal E_{\alpha\beta\gamma},\qquad \mathcal M_{\alpha\beta,\bar\gamma}=-i\pi^{\sigma}_{\bar\gamma}\,\mathcal D_{\alpha\beta\sigma}
-\pi^{\bar\mu}_{\alpha}\,\pi^{\bar\nu}_{\beta}\,\mathcal F_{\bar\mu\bar\nu\bar\gamma}.
\end{gather}

Another consequence of \eqref{d2phi_a2} is that the two-forms $\Psi$ and $\Phi$
must be contained in $\Lambda^2\{\theta^\alpha,\theta^{\bar\alpha},\eta_s\}$ and therefore, there should exist functions  $(X_s)_{\alpha\beta}=-(X_s)_{\beta\alpha}$ and $(Y_s)_{\alpha\bar\beta}=-(Y_s)_{\bar\beta\alpha}$ so that
 \begin{equation}\label{shape-dpsi_s}
 \begin{split}
 d\psi_1-\varphi_0\wedge\psi_1+&\varphi_2\wedge\psi_3-\varphi_3\wedge\psi_2+4i\phi_\beta\wedge\phi^\beta+4\pi^{\sigma}_{\bar\beta}\,\mathcal L_{\alpha\sigma}\,\theta^\alpha\wedge\theta^{\bar\beta} \equiv\ \\
 &\equiv(X_1)_{\alpha\beta}\,\theta^{\alpha}\wedge\theta^{\beta}+(X_1)_{\bar\alpha\bar\beta}\,\theta^{\bar\alpha}\wedge\theta^{\bar\beta}
 +(Y_1)_{\alpha\bar\beta}\,\theta^{\alpha}\wedge\theta^{\bar\beta}\\
d\psi_2-\varphi_0\wedge\psi_2+&\varphi_3\wedge\psi_1-\varphi_1\wedge\psi_3+2\pi_{\alpha\beta}\,\phi^{\alpha}\wedge\phi^{\beta}\\
&
+2\pi_{\bar\alpha\bar\beta}\,\phi^{\bar\alpha}\wedge\phi^{\bar\beta}+2i\pi^{\sigma}_{\bar\beta}\Big(\mathcal M_{\alpha\sigma}-(\mathfrak j\mathcal M)_{\alpha\sigma}\Big)\theta^\alpha\wedge\theta^{\bar\beta}\\ 
 &\equiv (X_2)_{\alpha\beta}\,\theta^{\alpha}\wedge\theta^{\beta}+(X_2)_{\bar\alpha\bar\beta}\,\theta^{\bar\alpha}\wedge\theta^{\bar\beta}+(Y_2)_{\alpha\bar\beta}\,\theta^{\alpha}\wedge\theta^{\bar\beta}\\
d\psi_3-\varphi_0\wedge\psi_3+&\varphi_1\wedge\psi_2-\varphi_2\wedge\psi_1-2i\pi_{\alpha\beta}\,\phi^{\alpha}\wedge\phi^{\beta}\\
&
+2i\pi_{\bar\alpha\bar\beta}\,\phi^{\bar\alpha}\wedge\phi^{\bar\beta}-2\pi^{\sigma}_{\bar\beta}\Big(\mathcal M_{\alpha\sigma}+(\mathfrak j\mathcal M)_{\alpha\sigma}\Big)\theta^\alpha\wedge\theta^{\bar\beta}\\ 
 &\equiv (X_3)_{\alpha\beta}\,\theta^{\alpha}\wedge\theta^{\beta}+(X_3)_{\bar\alpha\bar\beta}\,\theta^{\bar\alpha}\wedge\theta^{\bar\beta}+(Y_3)_{\alpha\bar\beta}\,\theta^{\alpha}\wedge\theta^{\bar\beta}
\end{split}\qquad\mod {\eta_s}
 \end{equation}
 Substituting \eqref{shape-dpsi_s} into \eqref{d2phi_a2}, we obtain that
 \begin{multline}
 0\ =\ \frac{i}{2}\Big[(X_1)_{\beta\gamma}\,\theta^{\beta}\wedge\theta^{\gamma}+(X_1)_{\bar\beta\bar\gamma}\,\theta^{\bar\beta}\wedge\theta^{\bar\gamma}
 +(Y_1)_{\beta\bar\gamma}\,\theta^{\beta}\wedge\theta^{\bar\gamma}\Big]\wedge\theta^\alpha\\
 +\frac{1}{2}\pi^{\alpha}_{\bar\delta}\bigg[ \Big((X_2)_{\beta\gamma}+i(X_3)_{\beta\gamma}\Big)\theta^{\beta}\wedge\theta^{\gamma}
 +\Big((X_2)_{\bar\beta\bar\gamma}+i(X_3)_{\bar\beta\bar\gamma}\Big)\,\theta^{\bar\beta}\wedge\theta^{\bar\gamma}\\
 +\Big((Y_2)_{\beta\bar\gamma}+i(Y_3)_{\beta\bar\gamma}\Big)\theta^{\beta}\wedge\theta^{\bar\gamma}\bigg]\wedge\theta^{\bar\delta},
 \end{multline}
 which yields the system of equations
 \begin{equation}\label{eq_for_X_s_ab}
 \begin{split}
 &(X_1)_{\alpha\beta}=0,\\
 &(X_2)_{\alpha\beta}-i(X_3)_{\alpha\beta}=0,\\
 &i\Big(g_{\beta\bar\alpha} (Y_1)_{\gamma\bar\delta}-g_{\gamma\bar\alpha} (Y_1)_{\beta\bar\delta}\Big)\ =\ 2\pi_{\bar\alpha\bar\delta}\Big((X_2)_{\beta\gamma}+i(X_3)_{\beta\gamma}\Big),\\
 &\Big(\pi_{\bar\alpha\bar\delta}\,(Y_2)_{\beta\bar\gamma}-\pi_{\bar\alpha\bar\gamma}\,(Y_2)_{\beta\bar\delta}+i\pi_{\bar\alpha\bar\delta}\,(Y_3)_{\beta\bar\gamma}-i\pi_{\bar\alpha\bar\gamma}\,(Y_3)_{\beta\bar\delta}\Big)=2ig_{\bar\alpha\beta}(X_1)_{\bar\gamma\bar\delta}.
 \end{split}
 \end{equation}
 
 If we multiply the third line of \eqref{eq_for_X_s_ab} by $g^{\beta\bar\alpha}$ and take the sum in $\bar\alpha$ and $\beta$ we obtain that
 \begin{equation*}
 (Y_1)_{\gamma\bar\delta}=-\frac{2i}{4n-1}\pi^{\sigma}_{\bar\delta}\Big((X_2)_{\gamma\sigma}+i(X_3)_{\gamma\sigma}\Big).
 \end{equation*}
 Substituting back into \eqref{eq_for_X_s_ab} gives
 \begin{equation*}
 \frac{2}{4n-1}\Big[g_{\beta\bar\alpha}\,\pi^{\sigma}_{\bar\delta}\Big((X_2)_{\gamma\sigma}+i(X_3)_{\gamma\sigma}\Big)
 -g_{\gamma\bar\alpha}\,\pi^{\sigma}_{\bar\delta}\Big((X_2)_{\beta\sigma}+i(X_3)_{\beta\sigma}\Big)\Big]\ =\ 2\pi_{\bar\alpha\bar\delta}\Big((X_2)_{\beta\gamma}+i(X_3)_{\beta\gamma}\Big)
 \end{equation*}
 Now, we multiply the latter by $\pi^{\bar\alpha\bar\delta}$ and take the sum in $\bar\alpha$ and $\bar\delta$ to arrive at
 \begin{equation*}
-\frac{4}{4n-1}\Big((X_2)_{\beta\gamma}+i(X_3)_{\beta\gamma}\Big)=8n\Big((X_2)_{\beta\gamma}+i(X_3)_{\beta\gamma}\Big).
 \end{equation*}
 This together with the second line of \eqref{eq_for_X_s_ab} implies that $(X_2)_{\alpha\beta}=(X_3)_{\alpha\beta}=0$. Proceeding similarly with the forth line of \eqref{eq_for_X_s_ab}, we conclude that $(X_s)_{\alpha\beta}=0$ and $(Y_s)_{\alpha\bar\beta}=0$.
 
 By considering the coefficients of $\theta^\beta\wedge\theta^\gamma\wedge\eta_1$, $\theta^\beta\wedge\theta^{\bar\gamma}\wedge\eta_1$, $\theta^\beta\wedge\theta^\gamma\wedge(\eta_2+i\eta_3)$,  $\theta^\beta\wedge\theta^{\bar\gamma}\wedge(\eta_2+i\eta_3)$,  $\theta^\beta\wedge\theta^\gamma\wedge(\eta_2-i\eta_3)$ and  $\theta^\beta\wedge\theta^{\bar\gamma}\wedge(\eta_2-i\eta_3)$ in \eqref{d2phi_a2}, we easily obtain that 
 \begin{equation}\label{phi_psi_UVW}
 \begin{split}
\Psi
  &=U\,\eta_1\wedge\big(\eta_2+i\eta_3\big)+\overline{U}\,\eta_1\wedge\big(\eta_2-i\eta_3\big)+iW\,\big(\eta_2+i\eta_3\big)\wedge\big(\eta_2-i\eta_3\big)\\
\Phi
&=V_1\,\eta_1\wedge\big(\eta_2+i\eta_3\big)+V_2\,\eta_1\wedge\big(\eta_2-i\eta_3\big)+V_3\,\big(\eta_2+i\eta_3\big)\wedge\big(\eta_2-i\eta_3\big)
 \end{split}
 \end{equation}
for some appropriate coefficients $U$, $V_s$ and $W=\overline {W}$. Substituting back into \eqref{d2phi_a2} and considering the coefficients of  $\theta^{\bar\beta}\wedge\eta_1\wedge(\eta_2+i\eta_3)$, $\theta^{\bar\beta}\wedge\eta_1\wedge(\eta_2+i\eta_3)$, $\theta^{\beta}\wedge\eta_1\wedge(\eta_2-i\eta_3)$, $\theta^{\bar\beta}\wedge\eta_1\wedge(\eta_2-i\eta_3)$,  $\theta^{\beta}\wedge(\eta_2+i\eta_3)\wedge(\eta_2-i\eta_3)$ and $\theta^{\bar\beta}\wedge(\eta_2+i\eta_3)\wedge(\eta_2-i\eta_3)$ separately, we obtain the equations

\begin{equation}\label{eq_for_UV}
\begin{split}
&\mathcal H_{\alpha,\beta} +(\mathcal M_2)_{\alpha\beta}-\frac{1}{2}\pi_{\alpha\beta}\overline{V_2}\ =\ 0\\
&\mathcal H_{\alpha,\bar\beta}-\frac{i}{2}g_{\alpha\bar\beta}\,U+\pi^{\bar\mu}_\alpha (\mathcal L_3)_{\bar\mu\bar\beta}+i\pi^\nu_{\bar\beta}(\mathcal M_1)_{\alpha\nu}\ =\ 0\\
&\mathcal C_{\alpha,\bar\beta}-i\pi^{\bar\sigma}_\alpha\,(\mathcal M_3)_{\bar\sigma\bar\beta}
+\pi^{\bar\sigma}_\alpha\,(\mathcal L_1)_{\bar\sigma\bar\beta}-\frac{i}{2}g_{\alpha\bar\beta}V_1\ =\ 0\\
&\mathcal C_{\alpha\beta}+\frac{1}{2}\pi_{\alpha\beta}\, U + i(\mathcal L_2)_{\alpha\beta}\ =\ 0\\
&\mathcal C_{\alpha\beta}-\frac{1}{2}\pi_{\alpha\beta}\, \overline{V_3} + i(\mathcal L_2)_{\alpha\beta}\ =\ 0\\
&\mathcal C_{\alpha,\bar\beta}+i\pi^{\sigma}_{\bar\beta}\,(\mathcal M_3)_{\alpha\sigma}
-\frac{1}{2}g_{\alpha\bar\beta}W\ =\ 0.
\end{split}
\end{equation} 

By the third and the sixth lines of \eqref{eq_for_UV}, we have
\begin{equation*}
\frac{1}{2}\pi_{\alpha\beta}(W+i\overline V_1)=(\mathcal L_1)_{\alpha\beta}+i(\mathcal M_3)_{\alpha\beta}-i\pi^{\bar\mu}_\alpha\,\pi^{\bar\nu}_\beta(\mathcal M_3)_{\bar\mu\bar\nu}
\end{equation*}  
and hence 
\begin{equation*}
V_1=-iW,\qquad (\mathcal L_1)_{\alpha\beta}=-i(\mathcal M_3)_{\alpha\beta}+i\pi^{\bar\mu}_\alpha\,\pi^{\bar\nu}_\beta(\mathcal M_3)_{\bar\mu\bar\nu}.
\end{equation*}    

Whereas, the forth and the fifth lines of \eqref{eq_for_UV} yield
\begin{equation*}
V_3=-\overline{U}.
\end{equation*} 

Therefore, if we define
\begin{equation*}
\begin{cases}
\mathcal P\overset{def}{=}U\\
\mathcal Q\overset{def}{=}\overline{V_2}\\
\mathcal R\overset{def}{=}W,
\end{cases}
\end{equation*}
we get the structure equations \eqref{dpsi_1} and \eqref{dpsi_23}, which completes the proof of Proposition~\ref{curvature}.

Furthermore, defining 
\begin{equation*}
\begin{cases}
\mathcal G_{\alpha\beta}\overset{def}{=}-i(\mathcal L_2)_{\alpha\beta}\\
\mathcal X_{\alpha\beta}\overset{def}{=}(\mathcal M_1)_{\alpha\beta}\\
\mathcal Y_{\alpha\beta}\overset{def}{=}(\mathcal M_2)_{\alpha\beta}\\
\mathcal Z_{\alpha\beta}\overset{def}{=}(\mathcal M_3)_{\alpha\beta},
\end{cases}
\end{equation*}
we obtain that
\begin{equation}
\begin{gathered}
(\mathcal L_1)_{\alpha\beta}=i\Big((\mathfrak j\mathcal Z_{\alpha\beta})-\mathcal Z_{\alpha\beta}\Big),\qquad  (\mathcal L_2)_{\alpha\beta}=i\mathcal G_{\alpha\beta},
\qquad (\mathcal L_3)_{\alpha\beta}=-i(\mathfrak j\mathcal G)_{\alpha\beta},\\
\mathcal C_{\alpha,\beta}=\mathcal G_{\alpha\beta}-\frac{1}{2}\pi_{\alpha\beta}\,\mathcal P,\qquad
\mathcal C_{\alpha,\bar\beta}=-i\pi^{\sigma}_{\bar\beta}\,\mathcal Z_{\alpha\sigma}+\frac{1}{2}g_{\alpha\bar\beta}\,\mathcal R,\\
\mathcal H_{\alpha,\beta}=-\mathcal Y_{\alpha\beta}+\frac{1}{2}\pi_{\alpha\beta}\,\mathcal Q,\qquad
\mathcal H_{\alpha,\bar\beta}=i\pi^{\sigma}_{\bar\beta}\Big(\mathcal G_{\alpha\sigma}-\mathcal X_{\alpha\sigma}\Big)+\frac{i}{2}g_{\alpha\bar\beta}\,\mathcal P.
\end{gathered}
\end{equation}

Now, substituting \eqref{phi_psi_UVW} into \eqref{d2phi_a2} and using the above relations, we get the second of the Bianchi identities \eqref{d2phi_a}.

We proceed by differentiating both sides of the equations \eqref{dpsi_1} and \eqref{dpsi_23}. After some straightforward calculations we arrive at the third \eqref{d2psi_1} and the forth \eqref{d2psi_2} of the Bianchi identities, which completes the proof of Proposition~\ref{bianchi}.

One immediate consequence of  \eqref{d2psi_1} and \eqref{d2psi_2} is that the one forms $\mathcal P^\ast$, $\mathcal Q^\ast$ and $\mathcal R^\ast$ must be vanishing modulo $\{\theta^\alpha,\theta^{\bar\alpha},\eta_s\}$. Let 
\begin{equation}\label{P*_to_R*}
 \begin{cases}
 \mathcal P^\ast = X_\epsilon\,\theta^{\epsilon}+  Y_{\bar\epsilon}\,\theta^{\bar\epsilon}
 +\mathcal P_1\,\eta_1 + \mathcal P_2\big(\eta_2+i\eta_3\big) +
\mathcal P_3\big(\eta_2-i\eta_3\big)\\
 \mathcal Q^\ast = Z_\epsilon\,\theta^{\epsilon}+W_{\bar\epsilon}\,\theta^{\bar\epsilon}
 +\mathcal Q_1\,\eta_1 + \mathcal Q_2\big(\eta_2+i\eta_3\big) +
\mathcal Q_3\big(\eta_2-i\eta_3\big)\\
 \mathcal R^\ast = U_\epsilon\,\theta^{\epsilon}+U_{\bar\epsilon}\,\theta^{\bar\epsilon}
 +\mathcal R_1\,\eta_1 + \mathcal R_2\big(\eta_2+i\eta_3\big) +
\overline{\mathcal R_2}\big(\eta_2-i\eta_3\big),
\end{cases}
 \end{equation}
where $X_\epsilon$, $Y_\epsilon$, $Z_\epsilon$, $W_\epsilon$, $U_\epsilon$, $\mathcal P_s$, $\mathcal Q_s$ and $\mathcal R_s$ are some appropriate functions. Substituting \eqref{P*_to_R*} and \eqref{S*_to_H*} into \eqref{d2psi_1}, \eqref{d2psi_2}  and considering the coefficients of $\theta^\alpha\wedge\eta_1\wedge(\eta_2+i\eta_3)$, $\theta^{\bar\alpha}\wedge\eta_1\wedge(\eta_2+i\eta_3)$, $\theta^\alpha\wedge\eta_1\wedge(\eta_2-i\eta_3)$, $\theta^{\bar\alpha}\wedge\eta_1\wedge(\eta_2-i\eta_3)$, $\theta^\alpha\wedge(\eta_2+i\eta_3)\wedge(\eta_2-i\eta_3)$, $\theta^{\bar\alpha}\wedge(\eta_2+i\eta_3)\wedge(\eta_2-i\eta_3)$  and $\eta_1\wedge(\eta_2+i\eta_3)\wedge(\eta_2-i\eta_3)$ separately, we obtain the equations
\begin{equation}\label{eq_for_N_s}
\begin{split}
&X_{\alpha} +4(\mathcal C_2)_{\alpha}\ =\ 0\\
&U_{\alpha} -4\pi^{\bar\sigma}_\alpha(\mathcal C_3)_{\bar\sigma}\ =\ 0\\
&Y_{\alpha}+4(\mathcal C_3)_\alpha -4i\pi^{\bar\sigma}_\alpha(\mathcal C_1)_{\bar\sigma}\ =\ 0\\
&U_{\alpha} +4i(\mathcal H_3)_{\alpha}-4i(\mathcal C_1)_{\alpha}\ =\ 0\\
&W_{\alpha} +4i\pi^{\bar\sigma}_\alpha\Big((\mathcal C_2)_{\bar\sigma}+(\mathcal H_1)_{\bar\sigma}\Big)\ =\ 0\\
&Z_{\alpha} -4(\mathcal H_2)_{\alpha}\ =\ 0\\
&Y_{\alpha} -4i\pi^{\bar\sigma}_\alpha(\mathcal H_3)_{\bar\sigma}\ =\ 0\\
&\mathcal R_1 -i\big(\mathcal P_3 -\overline{\mathcal P_3}\big)\ =\ 0\\
&\mathcal R_2 +i\big(\mathcal P_1 +\mathcal Q_3\big)\ =\ 0.
\end{split}
\end{equation} 

From these we easily deduce that if we define
\begin{equation*}
\begin{gathered}
\mathcal U_1\overset{def}{=}\mathcal P_1,\qquad
\mathcal U_2\overset{def}{=}\mathcal P_2,\qquad
\mathcal U_3\overset{def}{=}\mathcal P_3,\\
\mathcal W_1\overset{def}{=}\mathcal Q_1,\qquad
\mathcal W_2\overset{def}{=}\mathcal Q_2,\qquad
\mathcal W_3\overset{def}{=}\mathcal Q_3,\\
(\mathcal N_1)_\alpha\overset{def}{=}(\mathcal C_1)_\alpha,\qquad
(\mathcal N_2)_\alpha\overset{def}{=}(\mathcal C_2)_\alpha,\qquad
(\mathcal N_3)_\alpha\overset{def}{=}(\mathcal C_3)_\alpha,\\
(\mathcal N_4)_\alpha\overset{def}{=}(\mathcal H_1)_\alpha,\qquad
(\mathcal N_5)_\alpha\overset{def}{=}(\mathcal H_2)_\alpha,\\
\end{gathered}
\end{equation*}
then we have that
\begin{equation*}
\begin{gathered}
X_\alpha=-4(\mathcal N_2)_\alpha,\qquad
Y_\alpha=4\Big(i\pi^{\bar\sigma}_\alpha(\mathcal N_1)_\alpha-(\mathcal N_3)_\alpha\Big),\\
Z_\alpha=4(\mathcal N_5)_\alpha,\qquad
W_\alpha=-4i\pi^{\bar\sigma}_\alpha\Big((\mathcal N_2)_{\bar\sigma}-(\mathcal N_4)_{\bar\sigma}\Big),\\
U_\alpha=4\pi^{\bar\sigma}_\alpha(\mathcal N_3)_{\bar\sigma},\qquad
\mathcal R_1=i\big(\mathcal U_3-\overline{\mathcal U_3}\big),\qquad 
\mathcal R_2=-i\big(\mathcal U_1+\mathcal W_3\big),\\
(\mathcal H_3)_\alpha=(\mathcal N_1)_\alpha +i\pi^{\bar\sigma}_\alpha(\mathcal N_3)_{\bar\sigma}.
\end{gathered}
\end{equation*}
This completes the proof of Proposition~\ref{sec_der}.
\end{proof}

\section{The associated Cartan geometry}\label{sec5}

Our next goal is to check that the construction of the canonical coframe 
from Theorem \ref{Theorem_1} coincides with the general normalization used
for all parabolic geometries (and explained briefly in the appendix). 

First we compare the structure equations from the Proposition \ref{curvature}
with those of the homogeneous model $G\to G/P$. This verifies that our
coframe lives on the principal fibre bundle with the right structure group. 

Next, we express the Kostant's codifferential on the cochains explicitly,
and we obtain that indeed, the curvature components from Proposition 
\ref{curvature} are normalized in the canonical way.

\subsection{A few algebraic constructions}\label{algebra_pre}
Consider the standard action of the group $Sp(n+1,1)$ on $\mathbb R^{4n+8}$ defined by some (fixed) identification $\mathbb R^{4n+8}\cong \mathbb H^{n+2}$. Let $J_1,J_2,J_3$ be the induced invariant quaternionic structure on $\mathbb R^{4n+8}$ and let $\lc,\rc$ be the corresponding inner product of signature $\big(+(4n+4),-4\big)$. The complexification $\mathbb C^{4n+8}$ of  $\mathbb R^{4n+8}$ splits as a direct sum of $i$ and $-i$ eigenspaces with respect to the complex structure $J_1$,
\begin{equation*}
\mathbb C^{4n+8}=W\oplus\overline{W}.
\end{equation*}

Let us fix a basis $\{\mathfrak v_1,\mathfrak v_2,\mathfrak e_\alpha,\mathfrak w_1,\mathfrak w_2\}$ of $W$ for which
\begin{equation}\label{basis_W_first}
J_2(\mathfrak v_1)=\overline{\mathfrak v_2},\qquad J_2(\mathfrak e_\alpha)=\pi^{\bar\beta}_\alpha\mathfrak e_{\bar\beta},\qquad J_2(\mathfrak w_1)=\overline{\mathfrak w_2}
\end{equation}
and
\begin{equation}\label{basis_W_second}
\begin{aligned}
\lc \mathfrak v_1, \overline{\mathfrak v_1} \rc=&0&\qquad \lc \mathfrak v_1, \overline{\mathfrak v_2} \rc=&0&\qquad \lc \mathfrak v_1, \mathfrak e_{\bar\alpha} \rc=&0&\qquad \lc \mathfrak v_1, \overline{\mathfrak w_1} \rc=&1&\qquad \lc \mathfrak v_1, \overline{\mathfrak w_2} \rc=&0&\\
\lc \mathfrak v_2, \overline{\mathfrak v_1} \rc=&0&\qquad \lc \mathfrak v_2, \overline{\mathfrak v_2} \rc=&0&\qquad \lc \mathfrak v_2, \mathfrak e_{\bar\alpha} \rc=&0&\qquad \lc \mathfrak v_2, \overline{\mathfrak w_1} \rc=&0&\qquad \lc \mathfrak v_2, \overline{\mathfrak w_2} \rc=&1&\\
\lc \mathfrak e_\alpha, \overline{\mathfrak v_1} \rc=&0&\qquad \lc \mathfrak e_\alpha, \overline{\mathfrak v_2} \rc=&0&\qquad \lc \mathfrak e_\alpha, \mathfrak e_{\bar\beta} \rc=&g_{\alpha\bar\beta}&\qquad \lc \mathfrak e_\alpha, \overline{\mathfrak w_1} \rc=&0&\qquad \lc \mathfrak e_\alpha, \overline{\mathfrak w_2} \rc=&0&\\
\lc \mathfrak w_1, \overline{\mathfrak v_1} \rc=&1&\qquad \lc \mathfrak w_1, \overline{\mathfrak v_2} \rc=&0&\qquad \lc \mathfrak w_1, \mathfrak e_{\bar\alpha} \rc=&0&\qquad \lc \mathfrak w_1, \overline{\mathfrak w_1} \rc=&0&\qquad \lc \mathfrak w_1, \overline{\mathfrak w_2} \rc=&0&\\
\lc \mathfrak w_2, \overline{\mathfrak v_1} \rc=&0&\qquad \lc \mathfrak w_2, \overline{\mathfrak v_2} \rc=&1&\qquad \lc \mathfrak w_2, \mathfrak e_{\bar\alpha} \rc=&0&\qquad \lc \mathfrak w_2, \overline{\mathfrak w_1} \rc=&0&\qquad \lc \mathfrak w_2, \overline{\mathfrak w_2} \rc=&0.&
\end{aligned}
\end{equation}
The group $Sp(n+1,1)$ consists of all endomorphisms of $W$ that take $\{\mathfrak v_1,\mathfrak v_2,\mathfrak e_\alpha,\mathfrak w_1,\mathfrak w_2\}$ into a bases  with the same properties \eqref{basis_W_first}, \eqref{basis_W_second}. By differentiating these at the identity, we obtain the Lie algebra $\mathfrak g=sp(n+1,1)$ as the set of all matrices of the form
\begin{equation}\label{sp_n11}
\begin{pmatrix}
 -\frac{1}{2}(\varphi_0+i\varphi_1)&-\frac{1}{2}(\varphi_2-i\varphi_3)&2ig_{\beta\bar\sigma}\,\phi^{\bar\sigma}& i\psi_1&(\psi_2-i\psi_3)\\
\frac{1}{2}(\varphi_2+i\varphi_3)&-\frac{1}{2}(\varphi_0-i\varphi_1)&2i\pi_{\beta\sigma}\,\phi^{\sigma}& -(\psi_2+i\psi_3)&-i\psi_1\\
 i\theta^\alpha&-i\pi^\alpha_{\bar\sigma}\,\theta^{\bar\sigma}&\pi^{\alpha\sigma}\,\Gamma_{\sigma\beta}&2i\phi^\alpha&-2i\pi^\alpha_{\bar\sigma}\,\phi^{\bar\sigma}\\
  \frac{i}{2}\eta_1&\frac{1}{2}(\eta_2-i\eta_3)&ig_{\beta\bar\sigma}\,\theta^{\bar\sigma}&  \frac{1}{2}(\varphi_0-i\varphi_1)&-\frac{1}{2}(\varphi_2-i\varphi_3)\\
  -\frac{1}{2}(\eta_2+i\eta_3)&-\frac{i}{2}\eta_1&i\pi_{\beta\sigma}\,\theta^{\sigma}&\frac{1}{2}(\varphi_2+i\varphi_3)&\frac{1}{2}(\varphi_0+i\varphi_1)\\
\end{pmatrix},
\end{equation}
where $\eta_s,\varphi_s,\psi_s$ are real, and $\theta^\alpha,\phi^\alpha,\Gamma_{\alpha\beta}$ are complex so that
\begin{equation*}
\Gamma_{\alpha\beta}=\Gamma_{\beta\alpha},\qquad (\mathfrak j\Gamma)_{\alpha\beta}=\Gamma_{\alpha\beta}.
\end{equation*}

We may interpret $\eta_s,\theta^\alpha,\varphi_0,\varphi_s,\Gamma_{\alpha\beta},\phi^\alpha,\psi_s$ as left-invariant one-forms on the Lie group $Sp(n+1,1)$. We immediately derive (by using just matrix multiplication) the structure equations of the group:  
\begin{equation}\label{str-eq-con-sp}
\begin{gathered}
d\eta_1=-\varphi_0\wedge\eta_1-\varphi_2\wedge\eta_3+\varphi_3\wedge\eta_2+2i g_{\alpha\bar\beta}\,\theta^{\alpha}\wedge\theta^{\bar\beta}\\
d\eta_2=-\varphi_0\wedge\eta_2-\varphi_3\wedge\eta_1+\varphi_1\wedge\eta_3+\pi_{\alpha\beta}\,\theta^{\alpha}\wedge\theta^{\beta}+\pi_{\bar\alpha\bar\beta}\,\theta^{\bar\alpha}\wedge\theta^{\bar\beta}\\
d\eta_3=-\varphi_0\wedge\eta_3-\varphi_1\wedge\eta_2 + \varphi_2\wedge\eta_1-i\pi_{\alpha\beta}\,\theta^{\alpha}\wedge\theta^{\beta}+i\pi_{\bar\alpha\bar\beta}\,\theta^{\bar\alpha}\wedge\theta^{\bar\beta}\\
d\theta^\alpha=-i\phi^\alpha\wedge\eta_1-\pi^\alpha_{\bar\sigma}\phi^{\bar\sigma}\wedge(\eta_2+i\eta_3)-\pi^{\alpha\sigma}\Gamma_{\sigma\beta}\wedge\theta^\beta-\frac{1}{2}(\varphi_0+i\varphi_1)\wedge\theta^\alpha-\frac{1}{2}\pi^\alpha_{\bar\beta}(\varphi_2+i\varphi_3)\wedge\theta^{\bar\beta}\\
d\varphi_0=-\psi_1\wedge\eta_1-\psi_2\wedge\eta_2-\psi_3\wedge\eta_3-2\phi_\beta\wedge\theta^\beta-2\phi_{\bar\beta}\wedge\theta^{\bar\beta}\\
d\varphi_1=-\varphi_2\wedge\varphi_3-\psi_2\wedge\eta_3+\psi_3\wedge\eta_2+2i\phi_\beta\wedge\theta^\beta-2i\phi_{\bar\beta}\wedge\theta^{\bar\beta}\\
d\varphi_2=-\varphi_3\wedge\varphi_1-\psi_3\wedge\eta_1+\psi_1\wedge\eta_3-2\pi_{\sigma_\beta}\phi^\sigma\wedge\theta^\beta-2\pi_{\bar\sigma\bar\beta}\phi^{\bar\sigma}\wedge\theta^{\bar\beta}\\
d\varphi_3=-\varphi_1\wedge\varphi_2-\psi_1\wedge\eta_2+\psi_2\wedge\eta_1+2i\pi_{\sigma_\beta}\phi^\sigma\wedge\theta^\beta-2i\pi_{\bar\sigma\bar\beta}\phi^{\bar\sigma}\wedge\theta^{\bar\beta}\\
d\Gamma_{\alpha\beta}\ =\ -\pi^{\sigma\tau}\Gamma_{\alpha\sigma}\wedge\Gamma_{\tau\beta} + 2\pi^{\bar\sigma}_{\alpha}(\phi_\beta\wedge\theta_{\bar\sigma}-\phi_{\bar\sigma}\wedge\theta_\beta)+2\pi^{\bar\sigma}_{\beta}(\phi_\alpha\wedge\theta_{\bar\sigma}-\phi_{\bar\sigma}\wedge\theta_\alpha)\\
d\phi^{\alpha}=\frac{1}{2}(\varphi_0-i\varphi_1)\wedge\phi^\alpha 
 -\frac{1}{2}\pi^\alpha_{\bar\gamma}(\varphi_2+i\varphi_3)\wedge\phi^{\bar\gamma}
 -\pi^{\alpha\sigma}\, \Gamma_{\sigma\gamma}\wedge\phi^{\gamma}
 +\frac{i}{2}\,\psi_1\wedge\theta^\alpha+\frac{1}{2}\,\pi^\alpha_{\bar\gamma}(\psi_2+i\psi_3)\wedge\theta^{\bar\gamma}\\
 d\psi_1\ =\ \varphi_0\wedge\psi_1-\varphi_2\wedge\psi_3+\varphi_3\wedge\psi_2-4i \phi_\gamma\wedge\phi^\gamma\\
 d\psi_2+i\,d\psi_3\ =\ (\varphi_0-i\varphi_1)\wedge(\psi_2+i\psi_3)+i(\varphi_2+i\varphi_3)\wedge\psi_1+4\pi_{\gamma\delta}\phi^\gamma\wedge\phi^\delta.
\end{gathered}
\end{equation}
Notice that the equations \eqref{str-eq-con-sp} are formally identical with the corresponding structure equations \eqref{str-eq-deta}, \eqref{str-eq-con}, \eqref{dGamma_ab}, \eqref{dphi_a},   \eqref{dpsi_1}, \eqref{dpsi_23} for the global coframing on $P_1$ constructed in Theorem~\ref{Theorem_1}, if assuming that all curvature components vanish, 
\begin{equation}\label{FlatEq}
\mathcal S_{\alpha\beta\gamma\delta}=\mathcal V_{\alpha\beta\gamma}=\mathcal L_{\alpha\beta}=\mathcal M_{\alpha\beta}=\mathcal C_\alpha=\mathcal H_\alpha=\mathcal P=\mathcal Q=\mathcal R=0.
\end{equation} 

The Killing form of the Lie algebra $\mathfrak g=sp(n+1,1)$, which we will denote by $\mathbb B$, is defined as $\mathbb B(A,B)={trace}\big(C\mapsto [A,[B,C]]\big)$, $A,B\in \mathfrak g$. Using the above notation, we compute 
\begin{multline}\label{Killing_form}
\mathbb B(A,B)=-(4n+6)\Big(\eta_s(A)\psi_s(B)+\psi_s(A)\eta_s(B)\Big) +(2n+6)\varphi_0(A)\varphi_0(B)-(2n+4)\varphi_s(A)\varphi_s(B)\\
-4(2n+7)\Big(\theta_\alpha(A)\phi^\alpha(B)+\phi^\alpha(A)\theta_\alpha(B) +\theta_{\bar\alpha}(A)\phi^{\bar\alpha}(B)+\phi^{\bar\alpha}(A)\theta_{\bar\alpha}(B)\Big)-7\,\Gamma_{\alpha\beta}(A)\Gamma^{\alpha\beta}(B)
\end{multline}
Notice that the sum $\Gamma_{\alpha\beta}(A)\,\Gamma^{\alpha\beta}(B)$ produces always a real number, since we have
\begin{equation*}
\Gamma_{\alpha\beta}(A)\,\Gamma^{\alpha\beta}(B)=(\mathfrak j\Gamma)_{\alpha\beta}(A)\,(\mathfrak j\Gamma)^{\alpha\beta}(B)=\Big(\pi_\alpha^{\bar\sigma}\pi_\beta^{\bar\tau}\Gamma_{\bar\sigma\bar\tau}(A)\Big)\Big(\pi^\alpha_{\bar\mu}\pi^\beta_{\bar\nu}\Gamma^{\bar\mu\bar\nu}(B)\Big)=\Gamma_{\bar\alpha\bar\beta}(A)\,\Gamma^{\bar\alpha\bar\beta}(B).
\end{equation*}

Furthermore, the Lie algebra $\mathfrak g$ has a splitting (which is also a $|2|$-grading) 
\begin{equation*}
\mathfrak g=\mathfrak g_{-2}\oplus \mathfrak g_{-1}\oplus \underbrace{\mathbb R\oplus sp(1)\oplus sp(n)}_{\mathfrak g_0}\oplus \mathfrak g_{1} \oplus \mathfrak g_{2},
\end{equation*} 
that dualizes the splitting 
 \begin{equation*}
 \{\eta_s\},\ \{\theta^\alpha\}, \ \{\varphi_0\},\ \{\varphi_s\},\  \{\Gamma_{\alpha\beta}\},\ \{\phi^\alpha\},\ \{\psi_s\}
 \end{equation*}
of the left-invariant one-forms. Let 
\begin{equation}\label{frame_E_s}
\{E_s\in \mathfrak g_{-2}\},\{ Z_\alpha, Z_{\bar\alpha} \in \mathfrak g_{-1}\}
\end{equation}
 be a frame (of the complexification) of $\mathfrak g_{-}\overset{def}{=}\mathfrak g_{-2}\oplus \mathfrak g_{-1}$  dual to the coframe $\{\eta_s\}, \{\theta^\alpha,\theta^{\bar\alpha}\},$ i.e. such that
\begin{equation*}
\eta_s(E_t)=\delta_{st},\qquad \theta^\alpha(Z_\beta)=\delta^{\alpha}_\beta, \qquad{\theta^{\bar\alpha}}(Z_\beta)=0, \qquad Z_{\bar\beta}=\overline{Z_\beta},
\end{equation*}
 and let $\{\hat E_s\in \mathfrak g_{2}\},\{ \hat Z^\alpha, \hat Z^{\bar\alpha} \in \mathfrak g_{1}\}$ be the corresponding  frame 
of $\mathfrak g_{1}\oplus \mathfrak g_{2}$ dual to \eqref{frame_E_s} with respect to the Killing form $\mathbb B$, i.e. such that
\begin{equation*}
\mathbb B (E_s,\hat E_t)=\delta_{st},\qquad
\mathbb B (Z_\alpha, \hat Z^\beta)=\delta_{\alpha}^{\beta},\qquad \mathbb B (Z_\alpha, \hat Z^\beta)=0,\qquad \hat Z^{\bar \alpha} = \overline{\hat Z^\alpha}.
\end{equation*}
Then,  the map $\partial^\ast:\Lambda^2( \mathfrak g_-)^\ast\otimes \mathfrak g\longrightarrow  (\mathfrak g_-)^\ast\otimes \mathfrak g$, given, for any $A\in\mathfrak g_-$, by
\begin{multline}\label{def_d_ast}
(\partial^\ast K)(A)=2\big[\hat E_s,K(A,E_s)\big]+2\big[\hat Z^\alpha,K(A,Z_\alpha)\big]+2\big[\hat Z^{\bar\alpha},K(A,Z_{\bar\alpha})\big]\\
-K\big([\hat E_s,A]_-, E_s\big)-K\big([\hat Z^\alpha,A]_-, Z_\alpha\big)-K\big([\hat Z^{\bar\alpha}, A]_-, Z_{\bar\alpha}\big),
\end{multline}
where  $X_-$ denotes the projection of $X\in \mathfrak g$ onto $\mathfrak g_-$,
is known as the Kostant codifferential (cf. \cite{CS}, p. 261 or \cite{Yam}, p. 468).

\begin{lemma}\label{lemma-costnat-codif}
If $K\in\Lambda^2( \mathfrak g_-)^\ast\otimes \Big(\mathfrak sp(n)\oplus \mathfrak g_1\oplus \mathfrak g_2 \Big)\subset \Lambda^2( \mathfrak g_-)^\ast\otimes \mathfrak g$, then for any $A\in \mathfrak g_-$, we have
\begin{multline*}
(\partial^\ast K)(A)=\frac{1}{4(2n+7)}\Bigg(i\Big(K(Z^\alpha,Z_\alpha)-K(Z^{\bar\alpha},Z_{\bar\alpha})\Big)\eta_1(A)-\pi^{\alpha\beta}\,K(Z_\alpha, Z_\beta)\Big(\eta_2(A)+i\eta_3(A)\Big)\\
-
\pi^{\bar\alpha\bar\beta}\,K(Z_{\bar\alpha}, Z_{\bar\beta})\Big(\eta_2(A)-i\eta_3(A)\Big)\Bigg)
-2\pi^{\beta\sigma}\Gamma^{\bar\alpha}_\sigma \Big(K(A,Z_{\bar\alpha})\Big)\hat Z_{\beta}
-2\pi^{\bar\beta\bar\sigma}\Gamma^{\alpha}_{\bar\sigma}\Big(K(A,Z_{\alpha})\Big)\hat Z_{\bar\beta}\\
+\frac{i4(2n+3)}{2n+7}\Bigg(\phi^\alpha\Big(K(A,Z_\alpha)\Big)-\phi^{\bar\alpha}\Big(K(A,Z_{\bar\alpha})\Big)\Bigg)\hat E_1
+\frac{4(2n+3)}{2n+7}\,\pi^\alpha_{\bar\sigma}\, \phi^{\bar\sigma}\Big(K(A,Z_\alpha)\Big)\Big(\hat E_2+i\hat E_3\Big)\\
+\frac{4(2n+3)}{2n+7}\,\pi^{\bar\alpha}_{\sigma}\, \phi^{\sigma}\Big(K(A,Z_{\bar\alpha})\Big)\Big(\hat E_2-i\hat E_3\Big)
\end{multline*}
\end{lemma}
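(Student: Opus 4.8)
The plan is to compute $\partial^\ast K$ directly from the defining formula \eqref{def_d_ast}, exploiting the grading and the hypothesis that $K$ takes values in $\mathfrak{sp}(n)\oplus\mathfrak g_1\oplus\mathfrak g_2$. First I would collect the explicit brackets needed. From the matrix description \eqref{sp_n11} of $\mathfrak g$, the grading elements $E_s\in\mathfrak g_{-2}$, $Z_\alpha,Z_{\bar\alpha}\in\mathfrak g_{-1}$ and their Killing-duals $\hat E_s\in\mathfrak g_2$, $\hat Z^\alpha,\hat Z^{\bar\alpha}\in\mathfrak g_1$ are all explicit matrices (read off from the columns of one-forms $\eta_s,\theta^\alpha$ and $\psi_s,\phi^\alpha$ respectively, with the normalization fixed by $\mathbb B$ in \eqref{Killing_form}); I would tabulate the brackets $[\hat E_s,E_t]$, $[\hat E_s,Z_\alpha]$, $[\hat Z^\alpha,Z_\beta]$, $[\hat Z^\alpha,Z_{\bar\beta}]$, $[\hat Z^\alpha, E_s]$, and their projections onto $\mathfrak g_-$. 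These are finite matrix computations using \eqref{basis_W_first}, \eqref{basis_W_second}. The key structural facts I expect to use repeatedly: $[\hat E_s, A]\in\mathfrak g_0\oplus\mathfrak g_1$ and its $\mathfrak g_-$-part vanishes when $A\in\mathfrak g_{-2}$, while $[\hat E_s, Z_\alpha]_-\in\mathfrak g_{-1}$ lands in the $\mathfrak g_{-1}$-slot; similarly $[\hat Z^\alpha, E_s]_- \in \mathfrak g_{-1}$ and $[\hat Z^\alpha, Z_\beta]_- \in \mathfrak g_{-2}$.

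Next I would organize the computation by the value-type of $K$. Write $K = K^{\mathfrak{sp}(n)} + K^{\mathfrak g_1} + K^{\mathfrak g_2}$. For the term $2[\hat E_s, K(A,E_s)]$: since $K(A,E_s)$ lies in $\mathfrak{sp}(n)\oplus\mathfrak g_1\oplus\mathfrak g_2$ and $\hat E_s\in\mathfrak g_2$, the bracket lands in $\mathfrak g_1\oplus\mathfrak g_2\oplus\mathfrak g_3\oplus\mathfrak g_4 = \mathfrak g_1\oplus\mathfrak g_2$ — but actually $[\mathfrak g_2,\mathfrak{sp}(n)]\subset\mathfrak g_2$ and $[\mathfrak g_2,\mathfrak g_1]\subset\mathfrak g_3=0$, $[\mathfrak g_2,\mathfrak g_2]=0$, so only the $\mathfrak{sp}(n)$-part of $K(A,E_s)$ contributes, producing a $\hat E$-valued term. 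For $2[\hat Z^\alpha, K(A,Z_\alpha)]$ and its conjugate: $\hat Z^\alpha\in\mathfrak g_1$ bracketed against $\mathfrak{sp}(n)$ gives $\mathfrak g_1$ (the $\hat Z$-terms), against $\mathfrak g_1$ gives $\mathfrak g_2$ (the $\hat E$-terms), against $\mathfrak g_2$ gives $\mathfrak g_3=0$. The correction terms $-K([\hat E_s,A]_-,E_s)$ etc. feed the projected brackets back into the first slot of $K$; here the pairings $\theta^\alpha(\cdot)$, $\eta_s(\cdot)$, $\phi^\alpha(\cdot)$, $\Gamma^\alpha_\beta(\cdot)$ extract the relevant components, and this is where the $\eta_1$-, $(\eta_2\pm i\eta_3)$-coefficients involving $K(Z^\alpha,Z_\alpha)$ and $\pi^{\alpha\beta}K(Z_\alpha,Z_\beta)$ arise — recall $[\mathfrak g_1,\mathfrak g_{-2}]\subset\mathfrak g_{-1}$, $[\mathfrak g_1,\mathfrak g_{-1}]\subset\mathfrak g_0$, $[\mathfrak g_2,\mathfrak g_{-2}]\subset\mathfrak g_0$, $[\mathfrak g_2,\mathfrak g_{-1}]\subset\mathfrak g_1$. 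I would evaluate $(\partial^\ast K)(A)$ separately on $A=E_t$, $A=Z_\gamma$, $A=Z_{\bar\gamma}$, assemble the three cases, and read off the coefficients; symmetry of the $\Gamma$-part (using Lemma~\ref{sp(n)}) and reality/$\mathfrak j$-compatibility will let me rewrite sums like $\Gamma^{\bar\alpha}_\sigma$ in the stated form.

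The numerical constants $\tfrac{1}{4(2n+7)}$ and $\tfrac{4(2n+3)}{2n+7}$ are the main thing to get right, and they come entirely from the Killing-form normalization \eqref{Killing_form}. Concretely: $\hat Z^\alpha$ is defined by $\mathbb B(Z_\beta,\hat Z^\alpha)=\delta^\alpha_\beta$, and since $\mathbb B$ pairs $\theta_\alpha$ with $\phi^\alpha$ with coefficient $-4(2n+7)$, the dual element $\hat Z^\alpha$ is a specific multiple of the matrix carrying $\phi^\alpha$; likewise $\hat E_s$ is tied to the coefficient $-(4n+6)$ in front of $\eta_s\psi_s$, and the $\mathfrak g_0$-pairings ($\varphi_0$ with coefficient $2n+6$, the $\mathfrak{sp}(n)$-part with $-7$) enter when a $\mathfrak g_0$-valued intermediate bracket is paired back. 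Tracking these scalars through the double brackets and the trace-type contraction (the sum over $\alpha$ producing factors of $n$, as already seen in the $4n-1$ and $8n$ manipulations inside the proof of Proposition~\ref{sec_der}) is the real obstacle; everything else is bookkeeping in the grading. I would double-check the final constants by testing the formula on a convenient rank-one cochain, e.g.\ one supported on $K(Z^\alpha,Z_\alpha)$ with values in $\mathfrak{sp}(n)$, where both sides can be evaluated by hand.
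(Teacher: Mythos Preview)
Your plan is essentially the paper's approach: compute the brackets that enter \eqref{def_d_ast} and substitute. A few of your grading claims are off, though. Since $\hat E_s\in\mathfrak g_2$ and $Z_\alpha\in\mathfrak g_{-1}$, the bracket $[\hat E_s,Z_\alpha]$ lies in $\mathfrak g_1$, so $[\hat E_s,Z_\alpha]_-=0$ (not in $\mathfrak g_{-1}$); likewise $[\hat Z^\alpha,Z_\beta]\in\mathfrak g_0$ gives $[\hat Z^\alpha,Z_\beta]_-=0$ (not $\mathfrak g_{-2}$). Also $[\hat E_s,\mathfrak{sp}(n)]=0$, because $\mathfrak g_2$ carries no $\mathfrak{sp}(n)$-index; so the term $2[\hat E_s,K(A,E_s)]$ contributes nothing at all, not a $\hat E$-valued piece. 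These slips are harmless in practice --- you would discover the vanishing upon computing --- but they explain why no $K(\cdot,E_s)$ terms appear in the final formula.

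The one genuine methodological difference is that the paper avoids direct matrix multiplication entirely: rather than tabulate $(2n+4)\times(2n+4)$ commutators, it uses the identity $\omega([X,Y])=-d\omega(X,Y)$ for left-invariant forms together with the structure equations \eqref{str-eq-con-sp}. Thus, for instance, $[\hat Z^\alpha,B]$ is read off from $d\phi^\beta(\hat Z^\alpha,B)$, $d\phi^{\bar\beta}(\hat Z^\alpha,B)$, $d\psi_s(\hat Z^\alpha,B)$, and $[\hat Z^\alpha,A]_-$ from $d\theta^\beta(\hat Z^\alpha,A)$, $d\eta_s(\hat Z^\alpha,A)$. This makes the computation a short exercise in evaluating wedge products on basis vectors and keeps the Killing-form constants transparent (only $\psi_s(\hat E_t)=-\tfrac{\delta_{st}}{4n+6}$ and $\phi_\alpha(\hat Z^\beta)=-\tfrac{\delta^\beta_\alpha}{4(2n+7)}$ are needed). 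Your matrix route works too, but this trick is what buys the paper its brevity.
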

\begin{proof}
By \eqref{Killing_form}, it follows that 
\begin{equation*}
\psi_s(\hat E_t)=-\frac{\delta_{st}}{4n+6},\qquad \phi_\alpha(\hat Z^\beta)=-\frac{\delta_\alpha^\beta}{4(2n+7)},\qquad \phi_{\bar\alpha}(\hat Z^\beta)=0.
\end{equation*} 

Let $A\in \mathfrak g_{-}$ and $B\in sp(n)\oplus\mathfrak g_1\oplus\mathfrak g_{2}$  be any two matrices. Then, using the structure equations \eqref{str-eq-con-sp} of $\mathfrak g$, we compute that
\begin{equation}\label{com_with_B}
\begin{aligned}
\big[\hat E_s,B\big] \ =\  &-(4n+6)\psi_t\big([\hat E_s,B]\big)\hat E_t=(4n+6){d\psi_t(\hat E_s, B)}\hat E_t=0,\\
\big[\hat Z^\alpha,B\big]\ =\ &-4(2n+7)\,{\phi^\beta\big([\hat Z^\alpha,B]\big)}\hat Z_\beta -4(2n+7)\,{\phi^{\bar\beta}\big([\hat Z^\alpha,B]\big)}\hat Z_{\bar\beta}-(4n+6)\,{\psi_s\big([\hat Z^\alpha,B]\big)}\hat E_s\\
&=4(2n+7)\,{d\phi^\beta(\hat Z^\alpha,B)}\hat Z_\beta +4(2n+7)\,{d\phi^{\bar\beta}(\hat Z^\alpha,B)}\hat Z_{\bar\beta}+(4n+6)\,{d\psi_s(\hat Z^\alpha,B)}\hat E_s\\
&=-{g^{\alpha\bar\tau}\,\pi^{\bar\beta\bar\sigma}}\,\Gamma_{\bar\sigma\bar\tau} (B) \hat Z_{\bar\beta}
+\frac{i(4n+6)}{2n+7}\,\phi^\alpha(B)\hat E_1 +\frac{(4n+6)}{2n+7}\,\pi^\alpha_{\bar\sigma}\,\phi^{\bar\sigma}(B)\big(\hat E_2+i\hat E_3\big),\\
\big[\hat Z^{\bar\alpha},B\big]\ =\  &-{g^{\bar\alpha\tau}\,\pi^{\beta\sigma}}\,\Gamma_{\sigma\tau} (B) \hat Z_{\beta}
-\frac{i(4n+6)}{2n+7}\,\phi^{\bar\alpha}(B)\hat E_1 +\frac{(4n+6)}{2n+7}\,\pi^{\bar\alpha}_{\sigma}\,\phi^{\sigma}(B)\big(\hat E_2-i\hat E_3\big),
\end{aligned}
\end{equation}
and, similarly,
\begin{equation}\label{com_with_A}
\begin{aligned}
\big[\hat E_s,A\big]_- \ =\  &0,\\
\big[\hat Z^\alpha,A\big]_-\ =\ &{\theta^\beta\big([\hat Z^\alpha,A]\big)} Z_\beta +{\theta^{\bar\beta}\big([\hat Z^\alpha,A]\big)} Z_{\bar\beta}+{\eta_s\big([\hat Z^\alpha,B]\big)} E_s\\
&=-{d\theta^\beta(\hat Z^\alpha,A)} Z_\beta -{d\theta^{\bar\beta}(\hat Z^\alpha,A)} Z_{\bar\beta}-{d\eta_s(\hat Z^\alpha,A)} E_s\\
&=-\frac{\pi^{\alpha\beta}\big(\eta_2(A)+i\eta_3(A)\big)}{4(2n+7)}\, Z_{\beta}+\frac{ig^{\alpha\bar\beta}\eta_1(A)}{4(2n+7)}\, Z_{\bar\beta},\\
\big[\hat Z^{\bar\alpha}, A\big]_-\ =\ &-\frac{\pi^{\bar\alpha\bar\beta}\big(\eta_2(A)-i\eta_3(A)\big)}{4(2n+7)}\, Z_{\bar\beta}-\frac{ig^{\bar\alpha\beta}\eta_1(A)}{4(2n+7)}\, Z_{\beta}.
\end{aligned}
\end{equation}
Substituting \eqref{com_with_B} and \eqref{com_with_A} into \eqref{def_d_ast} gives the lemma.

\end{proof}

The subalgebra $\mathfrak p=\mathfrak g_0\oplus \mathfrak g_1\oplus \mathfrak g_2\subset \mathfrak g$ determines a parabolic subgroup  $\mathfrak P\subset Sp(n+1,1)$, which, alternatively, can be described as the stabilizer of the complex 2-plane $\text{span}\{\mathfrak v_1,\mathfrak v_2\}\subset W$  in $Sp(n+1,1)$. Explicitly, $\mathfrak P$ consists of all matrices of the form
\begin{equation}\label{parabolic_P}
\left(\begin{array}{c|c|c}
A & -A \left(\begin{array}{c}
                 U_{\beta\bar\sigma}\,r^{\bar\sigma} \\ 
                 U_{\beta\bar\tau}\,\pi^{\bar\tau}_{\sigma}\,r^\sigma\\
                 \end{array}\right)	 & A\left(\begin{array}{cc}
                                           -\frac{1}{2}r_\sigma\,r^\sigma+i\lambda_1 & -\lambda_2+i\lambda_3\\
                                           \lambda_2+i\lambda_3 & -\frac{1}{2}r_\sigma\,r^\sigma-i\lambda_1\\ 
                                           \end{array}\right)\\[4mm]
\hline
\begin{array}{cc}
       \vdots & \vdots\\
       0 &  0\\
       \vdots & \vdots\\
      \end{array} & U^\alpha_\beta & \begin{array}{cc}
                                         \vdots \qquad&\qquad \vdots\\
                                         r^\alpha \qquad&\qquad \pi^\alpha_{\bar\sigma}\,r^{\bar\sigma}\\
                                         \vdots \qquad&\qquad \vdots\\
                                            \end{array}\\  [4mm]                                        
\hline
\begin{array}{cc}
0 &0\\
0&0\\
\end{array} & \begin{array}{c}
                  \dots \ 0 \ \dots \\ 
                \dots\  0\ \dots\\
                 \end{array} & \frac{1}{det A}A\\
\end{array}\right),
\end{equation}
where
$$A\in CSp(1)=\left\{\left(\begin{array}{cc}
                             a_1 & -\overline{a_2}\\
                             a_2 & \overline {a_1}\\
                             \end{array}\right)\ :\ 
a_1,\ a_2\in \mathbb C,\ a_1^2+a_2^2\ne 0\right\},
$$
$U=(U^\alpha_\beta)\in Sp(n)$, $r=(r^\alpha)\in \mathbb C^{2n}$ and $\lambda_1,\lambda_2,\lambda_3$ are real numbers.

In general, a qc structure whose curvature components, given by Proposition~\ref{curvature}, satisfy \eqref{FlatEq} is called flat. Conversely, it follows from the theorem of Frobenius that every flat qc structure is locally equivalent to the qc structure of the homogeneous model $Sp(n+1,1)/\mathfrak P$ (i.e., the Sphere with its standard qc structure). It is a well known result from the general theory of parabolic geometries \cite{CS} that actually the vanishing of $\mathcal S_{\alpha\beta\gamma\delta}$ alone is enough to ensure the flatness of the qc structure, since the remaining curvature components can be expressed as components of the covariant derivatives of $\mathcal S_{\alpha\beta\gamma\delta}$; the precise formulas for this are given here by Proposition~\ref{sec_der}. 
The functions $\mathcal S_{\alpha\beta\gamma\delta}$ represent the so called harmonic curvature of the qc structure and it is well known \cite{CS} that this part of the curvature can be pushed down to produce a tensor field on the base manifold $M$. In general it is, however, a highly nontrivial task to obtain an explicit expression for this tensor field on the base manifold. For the qc case, Ivanov and Vassilev have obtained such an expression \cite{IV} in terms of the so called Biquard connection and its differential invariants.

\subsection{The normal Cartan connection}\label{normal_Cartan}
Let (M,H) be a quaternionic contact manifold and take $pr: P_1\rightarrow M$ to be the composition of the two principal bundle projections
\begin{equation*}
P_1\overset{\pi_1}{\rightarrow} P_0 \overset{\pi_0}{\rightarrow} M,
\end{equation*}
as constructed in Section~\ref{sec_eq_prob}. We can consider the global coframing
\begin{equation}\label{glob_cofr}
\{\eta_1,\eta_2,\eta_2,\theta^\alpha,\theta^{\bar\alpha},\varphi_0,\varphi_1,\varphi_2,\varphi_3\}\cup\{\Gamma_{\alpha\beta} : \alpha\le\beta\}\cup\{\phi^\alpha,\phi^{\bar\alpha},\psi_1,\psi_2,\psi_3\}
\end{equation}
of the (complexified) tangent bundle $TP_1$ (cf. Theorem~\ref{Theorem_1})
as a map $\omega:TP_1\rightarrow \mathfrak g$ ($\mathfrak g=sp(n+1,1)$), by declaring $\omega=\Omega(\eta_s,\theta^\alpha,\varphi_0,\varphi_s,\Gamma_{\alpha\beta},\phi^\alpha,\psi_s)$ to be the matrix given by formula \eqref{sp_n11}. 

Recall that the local sections of $P_1$ are precisely the local coframings $\eta_s,\theta^\alpha,\theta^{\bar\alpha},\varphi_0,\varphi_s$ on $TP_o$ for which the assertion of Lemma~\ref{Lex} is satisfied, and observe that the equations in the lemma coincide with the first three of the structure equations \eqref{str-eq-con-sp} for the corresponding left-invariant one-forms on $Sp(n+1,1).$ Since the adjoint action of $\mathfrak P$ (cf. \eqref{parabolic_P}) on $\mathfrak g$ preserves \eqref{str-eq-con-sp}, we can use it to define a natural action of $\mathfrak P$  on the manifold $P_1$ that will preserve the fibers of the projection $pr$. In fact, one can show that $pr: P_1\rightarrow M$ is a principle bundle with structure group $\mathfrak P/ \mathbb Z_2$. Moreover, the uniqueness part of Theorem~\ref{Theorem_1} ensures that the $\mathfrak g$-valued form $\omega$ on $P_1$ will be $\mathfrak P$ - equivariant and therefore it gives a Cartan connection on $pr: P_1\rightarrow M$.   

The curvature of the Cartan connection $\omega$ is a function $K\in C^{\infty}\Big(P_1,\Lambda^2(\mathfrak g_-)^\ast\otimes\big(sp(n)\oplus\mathfrak g_1\oplus \mathfrak g_2\big)\Big)$ which, by \eqref{dGamma_ab}, \eqref{dphi_a}, \eqref{dpsi_1}, \eqref{dpsi_23} and \eqref{str-eq-con-sp}, is given by

 \begin{equation*}
 \begin{split}
 \Gamma_{\alpha\beta}(K)\ = \ \ &\pi^{\sigma}_{\bar\delta}\,\mathcal S_{\alpha\beta\gamma\sigma}\,\theta^{\gamma}\wedge\theta^{\bar\delta}+\Big(\mathcal V_{\alpha\beta\gamma}\,\theta^\gamma
 +\pi^{\bar\sigma}_{\alpha}\,\pi^{\bar\tau}_{\beta}\,\mathcal V_{\bar\sigma\bar\tau\bar\gamma}\,\theta^{\bar\gamma}\Big)\wedge\eta_1\\
 &-i\pi^{\sigma}_{\bar\gamma}\,\mathcal V_{\alpha\beta\sigma}\,\theta^{\bar\gamma}\wedge(\eta_2+i\eta_3)+i(\mathfrak j\mathcal V)_{\alpha\beta\gamma}\,\theta^{\gamma}\wedge(\eta_2-i\eta_3)\\
 &-i\mathcal L_{\alpha\beta}\,(\eta_2+i\eta_3)\wedge(\eta_2-i\eta_3)+\mathcal M_{\alpha\beta}\,\eta_1\wedge(\eta_2+i\eta_3)\\
 +\ &(\mathfrak j M)_{\alpha\beta}\,\eta_1\wedge(\eta_2-i\eta_3),\\
\phi_{\alpha}(K)\ = -&i\pi_{\bar\delta}^{\sigma}\,\mathcal V_{\alpha\gamma\sigma}\,\theta^{\gamma}\wedge\theta^{\bar\delta}+\mathcal M_{\alpha\gamma}\,\theta^\gamma\wedge\eta_1+\pi^{\bar\sigma}_{\alpha}\,\mathcal L_{\bar\sigma\bar\gamma}\,\theta^{\bar\gamma}\wedge\eta_1\\
 +\ &i\mathcal L_{\alpha\gamma}\,\theta^{\gamma}\wedge(\eta_2-i\eta_3)-i\pi^{\sigma}_{\bar\gamma}\,\mathcal M_{\alpha\sigma}\,\theta^{\bar\gamma}\wedge(\eta_2+i\eta_3)
 -\mathcal C_{\alpha}(\eta_2+i\eta_3)\wedge(\eta_2-i\eta_3)\\
 +\ &\mathcal H_\alpha\,\eta_1\wedge(\eta_2+i\eta_3)+i\pi_{\alpha\sigma}\,\mathcal C^\sigma\,\eta_1\wedge(\eta_2-i\eta_3),\\
 \psi_1(K)\ =\ \ &4\pi^{\sigma}_{\bar\delta}\,\mathcal L_{\gamma\sigma}\,\theta^\gamma\wedge\theta^{\bar\delta}+4\mathcal C_\gamma\,\theta^\gamma\wedge\eta_1
 + 4\mathcal C_{\bar\gamma}\,\theta^{\bar\gamma}\wedge\eta_1-4i\pi_{\bar\gamma\bar\sigma}\, \mathcal C^{\bar\sigma}\,\theta^{\bar\gamma}\wedge(\eta_2+i\eta_3)\\
 +\ &4i\pi_{\gamma\sigma}\, \mathcal C^{\sigma}\,\theta^{\gamma}\wedge(\eta_2-i\eta_3)
 + \mathcal P\,\eta_1\wedge(\eta_2+i\eta_3)+\overline{\mathcal P}\,\eta_1\wedge(\eta_2-i\eta_3)\\
 +\ &i\mathcal R\,(\eta_2+i\eta_3)\wedge(\eta_2-i\eta_3),\\
 \psi_2(K)+i\,\psi_3(K)\ =\ \ &4i\pi^{\bar\sigma}_{\gamma}\,\mathcal M_{\bar\sigma\bar\delta}\,\theta^\gamma\wedge\theta^{\bar\delta}
 + 4i \pi^{\bar\sigma}_{\gamma}\,\mathcal C_{\bar\sigma}\,\theta^\gamma\wedge\eta_1
 -4\mathcal H_{\bar\gamma}\,\theta^{\bar\gamma}\wedge\eta_1-4i\mathcal C_{\bar\gamma}\,\theta^{\bar\gamma}\wedge(\eta_2+i\eta_3)\\
 -\ &4i\pi_{\gamma}^{\bar\sigma}\, \mathcal H_{\bar\sigma}\,\theta^{\gamma}\wedge(\eta_2-i\eta_3)
 - i\mathcal R\,\eta_1\wedge(\eta_2+i\eta_3)+\overline{\mathcal Q}\,\eta_1\wedge(\eta_2-i\eta_3)\\
 -\ &\overline{\mathcal P}\,(\eta_2+i\eta_3)\wedge(\eta_2-i\eta_3).
 \end{split}
 \end{equation*}

The properties of the curvature components (cf. Proposition~\ref{curvature})
\begin{equation*}
 \mathcal S_{\alpha\beta\gamma\delta},\  \mathcal V_{\alpha\beta\gamma},\ \mathcal  L_{\alpha\beta},\ \mathcal  M_{\alpha\beta}, \ \mathcal C_\alpha, \  \mathcal H_\alpha,\  \mathcal P, \  \mathcal Q,\  \mathcal R
\end{equation*}
imply that
\begin{gather*}
g^{\alpha\bar\beta}K(Z_\alpha, Z_{\bar\beta})=0,\qquad \pi^{\alpha\beta}K(Z_\alpha, Z_{\beta})=0,\qquad \Gamma_{\alpha\beta}\Big(K(Z^\beta, .)\Big)=0,\\
 \phi_{\alpha}\Big(K(Z^\alpha, .)\Big)=0, \qquad \pi^{\alpha\beta}\phi_{\alpha}\Big(K(Z_\beta, .)\Big)=0,
\end{gather*}
and therefore, by Lemma~\eqref{lemma-costnat-codif}, we have $\partial^\ast K =0.$ Thus $(P_1,\omega)$ coincides with the regular, normal Cartan geometry associated with the quaternionic contact manifold $(M,H)$. 

\section{Appendix}
This section serves as a brief collection of basic facts on Cartan
geometries. All the details and much more information can be found in the
book \cite{CS}. At the same time, we provide links to the general structure
theory to our computations.

\subsection{Cartan geometries}
Elie Cartan's generalized spaces (espace generalis\'e) are 
curved analogs of the homogeneous space $G/P$ for Lie groups $P\subset G$.
They are defined as right invariant absolute parallelism $\omega$ 
on a principal $P$--bundle $\mathcal
G$ reproducing the fundamental vector fields. Let us write 
$\mathfrak g$ and $\mathfrak p$ for the Lie algebras lof $G$ and $P$,
respectively.

A \emph{Cartan geometry $(\mathcal G,\omega)$
of type $G/P$} is a principal fiber bundle $\mathcal G$ with structure group
$P$, equipped with a smooth one-form $\omega\in\Omega^1(\mathcal G,\mathfrak g)$ 
satisfying
\begin{enumerate}
\item $\omega(\zeta_Z)(u)=Z$ for all $u\in\mathcal G$ and 
fundamental fields $\zeta_Z$,
$Z\in \mathfrak p$
\item $(r^p)^*\omega = \operatorname{Ad}(p^{-1})\o \omega$ for all $\in P$
\item $\omega|_{T_u\mathcal G}: T_u\mathcal G\to \mathfrak g$ is a 
linear isomorphism for
all $u\in \mathcal G$.
\end{enumerate} 
In particular, each $X\in \mathfrak g$ defines the {\em constant vector field}
$\omega^{-1}(X)$ defined by $\omega(\omega^{-1}(X)(u))=X$, $u\in \mathcal G$. 
The one forms with latter three properties are called {\em Cartan connections}. 

The homogeneous model $G\to G/P$ together 
with the Maurer Cartan form $\omega$ is an example of such geometry.

The morphisms between parabolic geometries $(\mathcal G,\omega)$ and
$(\mathcal G',\omega')$ are
principal fiber bundle morphisms $\phi$ which
preserve the Cartan connections, i.e. $\phi:\mathcal G\to \mathcal G'$ and
$\phi^*\omega'=\omega$. 

The structure equations $d\omega + \frac12 [\omega,\omega] = K$ 
define the horizontal smooth form 
$K\in\Omega^2(\mathcal G, \mathfrak g)$ called the {\em curvature} of the 
Cartan connection $\omega$.
The {\em curvature function} $\kappa:\mathcal G\to \wedge^2\mathfrak g_{-}^*
\otimes \mathfrak g$ is then defined by means of the
parallelism 
$$
\kappa(u)(X,Y)=K(\omega^{-1}(X)(u),\omega^{-1}(Y)(u))= 
[X,Y] - \omega([\omega^{-1}(X),\omega^{-1}(Y)])
.$$
In particular, the curvature function is valued in the cochains for the
second cohomology $H^2(\mathfrak g/\mathfrak p,\mathfrak g)$. 

The curvature vanishes if and only if the geometry is locally equivalent to its homogeneous model. 

\subsection{Parabolic geometries}
If we consider a semisimple Lie group $G$ and its parabolic subgroup $P$, we
call the Cartan geometries \emph{parabolic}.

It well known that the choice of the parabolic subalgebra $\mathfrak
p\subset \mathfrak g$ is equivalent to its grading 
$$
\mathfrak g = \mathfrak g_{-k}\oplus \dots\mathfrak g_{-1} \oplus \mathfrak
g_0 \oplus \mathfrak g_1 \oplus \dots \oplus \mathfrak g_k
$$
where $\mathfrak g_i = \mathfrak g_{-i}^*$ with respect to the Killing form.

Then there are two ways
how to split the curvature function $\kappa$ now. 
We may consider the target components $\kappa_i$
according to the values in $\mathfrak g_i$. The whole 
$\mathfrak g_-$--component 
$\kappa_-$ is called the {\em torsion} of the Cartan connection $\omega$.
The other possibility is to consider the homogeneity of the bilinear maps
$\kappa(u)$, i.e. 
$$
\kappa=\sum_{\ell=-k+2}^{3k}\kappa^{(\ell)},\quad \kappa^{(\ell)}_{|\mathfrak 
g_i\times\mathfrak
g_j}:\mathfrak 
g_i\times\mathfrak
g_j\to \mathfrak g_{i+j+\ell} 
.$$

Since we deal with semisimple algebras only, there is the
codifferential $\partial^*$ which is adjoint to the Lie algebra cohomology 
differential
$\partial$. Consequently, there is the Hodge theory
on the cochains which enables to deal very effectively with the curvatures.
In particular, we may use several restrictions on the values of the
curvature which turn out to be quite useful.

The parabolic geometry $(\mathcal G,\omega)$ with the curvature function
$\kappa$ is
called {\em flat} if $\kappa=0$, {\em torsion--free} if $\kappa_-=0$, 
{\em normal} if $\partial^*\circ\kappa=0$, and {\em regular} if
$\kappa^{(j)}=0$ for all $j\le 0$.

If all curvature $\kappa^{(j)}$ vanish for $j<0$, then the filtration
obtained from the grading of the Lie algebra and the absolute parallelism
$\omega$ is compatible with brackets of Lie vector fields, and if $k^{(0)}$
vanishes as well, then these brackets even coincide with the algebraic
bracket inherited by the absolute parallelism from $\mathfrak g_{\le0}$.  

\subsection{Regular filtrations on manifolds}
Let us fix the graded $\mathfrak g$ and $\mathfrak p$ as above.
Starting with a filtration $TM=T^{-k}M\supset \dots \supset T^{-1}M$,
assume that its
associated graded vector bundle $\operatorname{Gr}TM$, with its algebraic
bracket induced by the Lie bracket of vector fields, is pointwise isomorphic
to the negative part of the graded Lie algebra 
$\mathfrak g_{-k}\oplus\dots\oplus
\mathfrak g_{-1}$. If $\mathfrak g_0$ is smaller than the entire
$\mathfrak{gl}(\mathfrak g_{-1})$, then assume the frame bundle of
$\operatorname{Gr}TM$ has been reduced to the structure group $G_0$.
We call such filtrations {\em regular infinitesimal flag structures 
of type $\mathfrak g/\mathfrak p$}. 

\begin{thrm}
There is the bijective correspondence between the isomorphism classes of
regular normal parabolic geometries of type $G/P$ and the regular
infinitesimal flag structures of type $\mathfrak g/\frak p$ on $M$, 
except for one series of
one--graded, and one series of two--graded Lie algebras $\mathfrak g$ for which 
$H^1(\frak g_-,\frak g)$ is nonzero in homogeneous degree one.
\end{thrm}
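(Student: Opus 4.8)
The plan is to carry out the Tanaka-type prolongation adapted to the semisimple situation, following \cite{CS}. One direction is routine: given a regular normal parabolic geometry $(\mathcal G,\omega)$ of type $G/P$, the preimages under $\omega$ of $\mathfrak g_i\oplus\cdots\oplus\mathfrak g_k$ define a $P$-invariant filtration of $T\mathcal G$; pushing it down to $M$ and passing to the associated graded bundle yields a filtration of $TM$, while the $\mathfrak g_0$-part of $\omega$ supplies the reduction of the frame bundle of $\operatorname{Gr}TM$ to $G_0$. Regularity, i.e.\ the vanishing of $\kappa^{(j)}$ for $j\le0$, is exactly what forces the algebraic bracket on $\operatorname{Gr}TM$ induced by Lie brackets of vector fields to coincide with the one inherited from $\mathfrak g_-$, as recalled above; so one obtains a regular infinitesimal flag structure, and isomorphic geometries visibly give isomorphic flag structures. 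This defines the underlying-structure functor, whose inverse has to be produced.

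For the reverse direction I would start from a regular infinitesimal flag structure on $M$ and build a tower of principal bundles $\mathcal G=\mathcal G_k\to\cdots\to\mathcal G_1\to\mathcal G_0\to M$ together with successively defined $\mathfrak g$-valued forms, precisely mirroring the passage $P_1\to P_o\to M$ and the step-by-step introduction of the one-forms $\varphi_\bullet,\Gamma_{\alpha\beta},\phi^\alpha,\psi_s$ in Section~\ref{sec_eq_prob} and the following section. At the $i$-th stage one extends the candidate form by an arbitrary $\mathfrak g_i$-valued term, computes the curvature via $d\omega+\tfrac12[\omega,\omega]$, and imposes the normalization $\partial^\ast\!K=0$ in the appropriate homogeneity; the crucial algebraic input is that, by semisimplicity and the Hodge decomposition of $\Lambda^\bullet(\mathfrak g_-)^\ast\otimes\mathfrak g$ into the images of $\partial$, of $\partial^\ast$ and the harmonic part, the codifferential $\partial^\ast$ is injective on the relevant component, so the added term---hence the extension of the bundle---is uniquely pinned down. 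Iterating produces a regular, normal Cartan connection $\omega$ on $\mathcal G=\mathcal G_k$; this is the abstract mechanism of which Theorem~\ref{Theorem_1}, together with Lemma~\ref{lemma-costnat-codif}, is the explicit quaternionic contact instance.

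The main obstacle, and the source of the two exceptional series, is localized at the first step $i=0$: whether the $G_0$-bundle $\mathcal G_0$ with its candidate bracket data is already rigid enough to determine the normalized extension is governed by $H^1(\mathfrak g_-,\mathfrak g)$ in homogeneous degree one. When this group vanishes---the generic case, and in particular the quaternionic contact one, where $H^1$ lives only in negative homogeneities, cf.\ the Introduction---the prolongation at $i=0$ is unobstructed with unique normalization, and the two functors are mutually inverse, yielding the asserted bijection. The exceptions are exactly the one-graded $\mathfrak{sl}$-series (projective structures) and the one two-graded series (contact projective structures), for which $H^1(\mathfrak g_-,\mathfrak g)$ has a nonzero homogeneity-one component; there the infinitesimal flag structure is degenerate---no additional structure, respectively a bare contact structure---and must be supplemented by a choice of a distinguished class of connections before the correspondence becomes bijective.

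Finally, functoriality follows formally: each step of the prolongation is natural in $M$ and the extension at each stage is unique, so a morphism of regular infinitesimal flag structures lifts step by step to a unique morphism of the associated geometries; together with the underlying-structure functor this exhibits the two constructions as mutually inverse equivalences on isomorphism classes. Complete proofs of all these points are given in \cite{CS}.
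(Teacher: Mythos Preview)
Your proposal is a reasonable sketch of the standard Tanaka--\v{C}ap--Schichl--Slov\'ak prolongation argument, and it correctly identifies the role of the Hodge decomposition for $\partial^\ast$, the homogeneity-one component of $H^1(\mathfrak g_-,\mathfrak g)$ as the obstruction, and the projective and contact projective series as the exceptions. There is nothing wrong with it as an outline.

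However, you should be aware that the paper does not actually prove this theorem. It appears in the Appendix as a background result, stated without proof and attributed to \cite{CS} and \cite{Yam}; the surrounding text explicitly says the general theorem is proved constructively in those references and serves here only as a guideline for the explicit quaternionic contact construction carried out in the body of the paper. So there is no ``paper's own proof'' to compare against: your sketch already goes beyond what the paper offers, and both you and the paper ultimately defer to \cite{CS}. Your concluding sentence, ``Complete proofs of all these points are given in \cite{CS},'' is in fact the entirety of the paper's treatment.
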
 

Although this general theorem is proved in a constructive way, cf.
\cite{CS} or \cite{Yam}, the explicit and effective construction is far
from trivial in the individual cases as soon as the grading is of length
$k\ge 2$. Thus the arguments leading to the theorem are rather serving as
guidelines for the explicit constructions.

The entire section 5 provides the links of the Cartan connection from the
above theorem to our construction in the paper. In particular, the matrix of
one-forms \eqref{sp_n11} is just the explicit expression of $\omega$,
the comparison of the structure equations for the Lie group $Sp(n+1,1)$ with
the structure equations for this $\omega$ shows the right equivariance and we have
computed there that the normality condition is satisfied too.

The uniqueness part from the latter theorem then implies that we have
constructed the canonical regular and normal Cartan connection $\omega$.

\subsection{The curvature} 
We are now in position to say more about the structure of the curvature. 
Each representation $\rho$ of the entire group $G$ on a vector space
$\mathbb V$ defines the natural bundle
$\mathcal G\times_\rho \mathbb V$ over
the manifolds $M$ with the regular infinitesimal flag structures. Moreover, the
unique extension $\tilde\omega:T\tilde{\mathcal G} \to \mathfrak g$ 
of the canonical Cartan connection $\omega$ to the extended  bundle $\tilde{
\mathcal G}=\mathcal G\times_P G$ provides the canonical covariant derivative
on all such natural bundles (as a principal connection on $\tilde {\mathcal G}$).
The adjoint representation of $G$ on $\mathfrak
g$ is the best example leading to the so called \emph{adjoint tractor bundle
$\mathcal A$} and the curvature $\kappa$ can be interpreted as a two-form on
the manifold $M$ with values in $\mathcal A$. 

The splitting of $\mathfrak g$ into irreducible $G_0$ components corresponds
to the splitting of the adjoint tractor bundle into components seen whenever
we reduce the structure group to $G_0$. In our case, we have $\mathfrak g_0= \mathfrak h\oplus \mathfrak{sp}(n)$ and 
there is the only harmonic component $\mathcal 
S_{\alpha\beta\gamma\delta}$ which corresponds to cochains
$\Lambda^2\mathfrak g_{-1}^*\otimes \mathfrak{sp}(n)$. This is the only
component of homogeneity two and all the potentially nonzero 
components of $\kappa$ as deduced
in the Proposition
\ref{curvature}, are listed in the table.

\medskip
\centerline{%
\begin{tabular}{|c|c|c|}
\hline
homogeneity & the cochains & object in Proposition \ref{curvature}
\\
\hline
2 & $\mathfrak g_{-1}\wedge \mathfrak g_{-1} \to \mathfrak{sp}(n)$ & 
$\mathcal
S_{\alpha\beta\gamma\delta}$
\\
\hline
3 & $\mathfrak g_{-2}\otimes \mathfrak g_{-1} \to \mathfrak{sp}(n)$ & 
$\mathcal
V_{\alpha\beta\gamma}$
\\
\hline
3 & $\mathfrak g_{-1}\wedge \mathfrak g_{-1} \to \mathfrak g_1$ & 
$\mathcal
V_{\alpha\beta\gamma}$
\\
\hline
4 & $\mathfrak g_{-2}\wedge \mathfrak g_{-2} \to \mathfrak{sp}(n)$ & 
$\mathcal
L_{\alpha\beta}, \mathcal M_{\alpha\beta}$
\\
\hline
4 & $\mathfrak g_{-2}\otimes \mathfrak g_{-1} \to \mathfrak g_1$ & 
$\mathcal
L_{\alpha\beta}, \mathcal M_{\alpha\beta}$
\\
\hline
4 & $\mathfrak g_{-1}\wedge \mathfrak g_{-1} \to \mathfrak g_2$ & 
$\mathcal
L_{\alpha\beta}, \mathcal M_{\alpha\beta}$
\\
\hline
5 & $\mathfrak g_{-2}\wedge \mathfrak g_{-2} \to \mathfrak g_1$ & 
$\mathcal
C_{\alpha}, \mathcal H_{\alpha}$
\\
\hline
5 & $\mathfrak g_{-2}\otimes \mathfrak g_{-1} \to \mathfrak g_2$ & 
$\mathcal
C_{\alpha}, \mathcal H_{\alpha}$
\\
\hline
6 & $\mathfrak g_{-2}\wedge \mathfrak g_{-2} \to \mathfrak g_2$ & 
$\mathcal P, \mathcal Q, \mathcal R$
\\
\hline
\end{tabular}
}
\medskip 

Notice that it is the $\partial^*\kappa=0$ normalization which enforces
several potentially different components to coincide.

Since the second Lie algebra 
cohomology $H^2(\mathfrak g/\mathfrak p,\mathfrak g)$ is completely
reducible with trivial action of $\mathfrak g_{>0}$, the harmonic curvature
components living in the kernel of both $\partial^*$ and $\partial$ are well
defined tensors on $M$. A great general result of the so called BGG calculus
reads that the entire curvature of a regular and normal Cartan connection is
obtained as the image of the harmonic part under a suitable natural linear
differential operator. 

Although we have not delivered this linear differential operator
explicitely, we came quite close in Proposition \ref{secondary}.
Indeed, notice that the expression for the differential $d\mathcal
S_{\alpha\beta\gamma\delta}$ contains known combination of the curvature
components and the new quantities identified in the proposition. A
straightforward check of the involved symmetries and relations reveals that
we can actually compute the quantities $\mathcal
V_{\alpha\beta\gamma}$ in terms of the differential of $\mathcal
S_{\alpha\beta\gamma\delta}$. Similarly, the next line allows to express
the component composed of $\mathcal
M_{\alpha\beta}$ and $\mathcal
L_{\alpha\beta}$ depending on second order derivatives of $\mathcal
S_{\alpha\beta\gamma\delta}$ and first derivatives of $\mathcal
V_{\alpha\beta\gamma}$. This goes on, until we finally express $\mathcal P$,
$\mathcal Q$, and $\mathcal R$ from the lines involving the differentials of
$\mathcal C_{\alpha}$ and $\mathcal H_{\alpha}$. The latter expression will
involve fourth derivatives of $\mathcal S_{\alpha\beta\gamma\delta}$, as
expected.


\end{document}